\title{The persistence principle over weak interpretability logic}
\author{Sohei Iwata\footnote{Email: siwata@dpc.agu.ac.jp}
\footnote{Division of Liberal Arts and Sciences, Aichi-Gakuin University}, 
Taishi Kurahashi\footnote{Email: kurahashi@people.kobe-u.ac.jp}
\footnote{Graduate School of System Informatics, Kobe University, Japan}
and Yuya Okawa\footnote{Email: math.y.okawa@gmail.com}
\footnote{Graduate School of Science and Engineering, Chiba University, Japan}
}
\date{}
\theoremstyle{plain}
\newtheorem{thm}{Theorem}[section]
\newtheorem{lem}[thm]{Lemma}
\newtheorem{prop}[thm]{Proposition}
\newtheorem{cor}[thm]{Corollary}
\newtheorem{fact}[thm]{Fact}
\newtheorem{prob}[thm]{Problem}
\newtheorem*{cl}{Claim}
\theoremstyle{definition}
\newtheorem{defn}[thm]{Definition}
\newcommand{\PA}{\mathrm{PA}}
\newcommand{\N}{\mathbb{N}}
\newcommand{\K}{\mathbf{K}}
\newcommand{\GL}{\mathbf{GL}}
\newcommand{\FGL}{\mathbf{GL} \otimes \mathbf{GL}}
\newcommand{\GLK}{\mathbf{GL} \otimes \mathbf{K}}
\newcommand{\IL}{\mathbf{IL}}
\newcommand{\ILM}{\mathbf{ILM}}
\newcommand{\ILP}{\mathbf{ILP}}
\newcommand{\PP}{\mathbf{P}}
\newcommand{\G}[1]{\mathbf{L#1}}
\newcommand{\J}[1]{\mathbf{J#1}}
\newcommand{\R}[1]{\mathbf{R#1}}
\newcommand{\var}{\mathrm{var}}
\newcommand{\ILms}{{\IL}^-_{\mathrm{seq}}}
\newcommand{\ILmPs}{{\IL^-(\PP)}_{\mathrm{seq}}}
\newcommand{\WL}{\mathrm{wl}}
\newcommand{\WR}{\mathrm{wr}}
\newcommand{\NL}{\neg\mathrm{l}}
\newcommand{\NR}{\neg\mathrm{r}}
\newcommand{\seq}[1]{\langle#1\rangle}
\newcommand{\Prf}{\mathrm{Prf}}
\newcommand{\PR}{\mathrm{Pr}}
\newcommand{\Con}{\mathrm{Con}}
\newcommand{\gn}[1]{\ulcorner#1\urcorner}
\newcommand{\FTT}[1]{f_{\tau_0, \tau_1}(#1)}
\newcommand{\FRS}[1]{f_{\tau_R, \tau_S}(#1)}
\begin{document}

\maketitle

\begin{abstract}
We focus on the persistence principle over weak interpretability logic.
Our object of study is the logic obtained by adding the persistence principle to weak interpretability logic from several perspectives. 
Firstly, we prove that this logic enjoys a weak version of the fixed point property. 
Secondly, we introduce a system of sequent calculus and prove the cut-elimination theorem for it. 
As a consequence, we prove that the logic enjoys the Craig interpolation property. 
Thirdly, we show that the logic is the natural basis of a generalization of simplified Veltman semantics, and prove that it has the finite frame property with respect to that semantics. 
Finally, we prove that it is sound and complete with respect to some appropriate arithmetical semantics.
\end{abstract}

\section{Introduction}

The notion of interpretability between theories of mathematics has been studied using the framework of modal logic.
Visser~\cite{Vis88,Vis90} introduced the logic $\IL$ which is an extension of the logic $\GL$ of provability in the language of $\GL$ augmented by the binary modal operator $\rhd$. 
In this framework, the formula $A \rhd B$ is intended as ``$T + A$ is interpretable in $T + B$'' for some suitable theory $T$. 
Visser also introduced extensions $\ILM$ and $\ILP$ of $\IL$, and it has been shown that these logics are complete with respect to such arithmetical interpretations. 
In particular, Visser~\cite{Vis90} proved that the logic $\ILP$ which is obtained from $\IL$ by adding the persistence principle $\PP$: $A \rhd B \to \Box(A \rhd B)$ as an axiom is arithmetically complete for finitely axiomatizable appropriate theories $T$. 

De Jongh and Sambin~\cite{Sam76} independently proved that $\GL$ has the fixed point property (FPP), that is, for any modal formula $A(p)$ in the language of $\GL$, if each occurrence of $p$ in $A(p)$ is in some scope of $\Box$, then there exists a modal formula $F$ such that $\var(F) \subseteq \var(A(p)) \setminus \{p\}$ and $F \leftrightarrow A(F)$ is provable in $\GL$, where $\var(F)$ is the set of all propositional variables contained in $F$. 
This is a modal counterpart of the Fixed Point Lemma for theories of arithmetic used in a standard proof of G\"odel's incompleteness theorems. 
De Jongh and Visser~\cite{DeJVis91} proved that the de Jongh--Sambin fixed point theorem can be extended to the language of $\IL$, and that $\IL$ has FPP. 

The authors have analyzed the question of which sublogics of $\IL$ are sufficient to enjoy FPP.  
First,~\cite[Definition 2.1]{KO21} introduced the logic $\IL^-$ as a basis for semantical investigations of sublogics of $\IL$, and proved the completeness and the incompleteness of several logics between $\IL^-$ and $\IL$ with respect to some relational semantics. 
Then,~\cite{IKO20} studied FPP for these sublogics. 
In particular, it was proved that the fixed point theorem holds for the sublogic $\IL^-(\J{2}_+, \J{5})$, and that it does not hold for several sublogics of $\IL$. 
Moreover,~\cite{IKO20} introduced a weaker version $\ell$FPP of the fixed point property, and proved that the sublogic $\IL^-(\J{4}, \J{5})$ has $\ell$FPP. 
Furthermore,~\cite{Okawa} proved that every element of the infinite descending sequence $\langle \IL^-(\J{2}_+, \J{5}^n) \rangle_{n \geq 1}$ (resp.~$\langle \IL^-(\J{4}, \J{5}^n) \rangle_{n \geq 1}$) has FPP (resp.~$\ell$FPP). 

These logics are not the only extensions of $\IL^-$ having FPP or $\ell$FPP.
De Jongh and Visser~\cite{DeJVis91} gave a simple proof that a sublogic $\mathbf{SR_1}$ of $\ILP$ has FPP. 
Then, in our context, it immediately follows that $\IL^-(\J{4}_+, \PP)$ also has FPP, although $\IL^-(\J{4}_+)$ does not have FPP as shown in~\cite{IKO20}.  
This observation suggests that the persistence principle $\PP$ may be logically well-suited to sublogics of $\IL$. 

Our motivation for the present paper is to analyze the behavior and effects of the persistence principle $\PP$ over the logic $\IL^-$. 
Our main research focus is the logic $\IL^-(\PP)$ obtained by adding $\PP$ into $\IL^-$. 
In fact, although it was proved in~\cite{IKO20} that $\IL^-$ does not have $\ell$FPP, in Section~\ref{Sec_Pre}, we show that $\IL^-(\PP)$ enjoys $\ell$FPP. 
Thus, it can be seen that the principle $\PP$ has a beneficial effect on $\IL^-$ from the viewpoint of the fixed point theorem.
We study the logic $\IL^-(\PP)$ from three aspects: proof theoretic, relational semantic, and arithmetical semantic aspects in Sections~\ref{Sec_CE},~\ref{Sec_SV}, and~\ref{Sec_AC}, respectively. 

In Section~\ref{Sec_CE}, we investigate proof theoretic aspects of $\IL^-(\PP)$. 
The systems of sequent calculus for the logics $\IL$ and $\ILP$ have been introduced and studied by Sasaki \cite{Sas02_1,Sas02_2,Sas03}. 
In the present paper, we introduce the systems $\ILms$ and $\ILmPs$ of sequent calculus for the logics $\IL^-$ and $\IL^-(\PP)$, respectively.  
We prove that the cut-elimination theorem holds for $\ILmPs$. 
Then, we prove that Maehara's method can be applied to the systems $\ILms$ and $\ILmPs$, that is, every sequent having a cut-free proof has a Craig interpolant. 
Therefore, we conclude that $\IL^-(\PP)$ has the Craig interpolation property. 
On the other hand, it was shown in~\cite{IKO20} that $\IL^-$ does not have the Craig interpolation property, and hence we obtain that the cut-elimination theorem does not hold for $\ILms$.
From these results, we can see that $\PP$ is a principle that behaves proof theoretically well for $\IL^-$.

In Section~\ref{Sec_SV}, we investigate relational semantics of $\IL^-(\PP)$. 
The logics $\IL$ and $\ILP$ have relational semantics which are extensions of Kripke semantics. 
A triple $(W, R, \{S_w\}_{w \in W})$ is called a \textit{Veltman frame} if $(W, R)$ is a $\GL$-frame and for each $w \in W$, $S_w$ is a transitive and reflexive binary relation on $R[w] = \{x \in W \mid w R x\}$ satisfying $(\forall x, y \in W) (w R x \ \&\ x R y \Rightarrow x S_w y)$.  
De Jongh and Veltman~\cite{DeJVel90} proved the completeness theorems of $\IL$ and $\ILP$ with respect to Veltman semantics. 
The treatment of the family $\{S_w\}_{w \in W}$ in Veltman semantics is somewhat complicated, and then a simplified semantics was introduced by Visser~\cite{Vis88}. 
A triple $(W, R, S)$ is called a \textit{simplified Veltman frame} or a \textit{Visser frame} if $(W, R)$ is a $\GL$-frame and $S$ is a transitive and reflexive binary relation on $W$ satisfying $(\forall x, y \in W) (x R y \Rightarrow x S y)$. 
Then, Visser proved that $\IL$ and $\ILP$ are also complete with respect to simplified Veltman semantics. 

The notion of Veltman frames can be generalized. 
We say that a triple $(W, R, \{S_w\}_{w \in W})$ is an $\IL^-\!$-frame if $(W, R)$ is a $\GL$-frame and for each $w \in W$, $S_w \subseteq R[w] \times W$.  
Then,~\cite{KO21} proved that $\IL^-$ is characterized by the class of all finite $\IL^-\!$-frames. 
In this context, it seems natural to generalize the notion of simplified Veltman semantics as well. 
Let us consider triples $(W, R, S)$ such that $(W, R)$ is a $\GL$-frame and $S$ is simply a binary relation on $W$. 
Then, unlike the case of Veltman semantics, in the present paper, we prove that the logic $\IL^-(\PP)$ is valid in all such frames. 
Moreover, we prove that $\IL^-(\PP)$ is characterized by the class of all finite such frames. 
Therefore, $\IL^-(\PP)$ is the natural basis of a generalization of simplified Veltman semantics. 
This result also shows that simplified Veltman semantics is a useful device for analyzing logics containing the principle $\PP$. 
Also, as an application of that result, we show that $\IL^-(\PP)$ is faithfully embeddable into several extensions of the fusion $\GLK$ of $\GL$ and $\K$. 

Finally, in Section~\ref{Sec_AC}, we investigate an arithmetical semantics of $\IL^-(\PP)$. 
Inspired from our embedding result of $\IL^-(\PP)$ into bimodal logics, we introduce appropriate arithmetical semantics for $\IL^-(\PP)$ based on Fefermanian provability predicates satisfying L\"ob's derivability conditions (See Visser~\cite{Vis21}). 
Then, by tracing the proof of the main result in~\cite{Kur18_2}, we prove that $\IL^-(\PP)$ is sound and complete with respect to such arithmetical semantics.

\section{Sublogics of the interpretability logic and fixed point properties}\label{Sec_Pre}

In this section, we introduce the logics $\IL$ and $\IL^-$. 
For several logics related to them, we summarize known results on the fixed point theorem and the Craig interpolation theorem. 
We also show that the logic $\IL^-(\PP)$ has $\ell$FPP.

The language $\mathcal{L}(\rhd)$ consists of countably many propositional variables $p, q, r, \ldots$, the logical constant $\bot$, Boolean connectives $\neg, \land, \lor$, and $\to$, the unary modal operator $\Box$, and the binary modal operator $\rhd$. 
The logical constant $\top$ and the modal operator $\Diamond$ are introduced as abbreviations for $\neg \bot$ and $\neg \Box \neg$, respectively. 
The axioms of the logic $\IL$ in the language $\mathcal{L}(\rhd)$ are as follows: 
\begin{description}
	\item [L1] All tautologies in $\mathcal{L}(\rhd)$; 
	\item [L2] $\Box(A \to B) \to (\Box A \to \Box B)$; 
	\item [L3] $\Box(\Box A \to A) \to \Box A$; 
	\item [J1] $\Box(A \to B) \to A \rhd B$; 
	\item [J2] $(A \rhd B) \land (B \rhd C) \to A \rhd C$; 
	\item [J3] $(A \rhd C) \land (B \rhd C) \to (A \lor B) \rhd C$; 
	\item [J4] $A \rhd B \to (\Diamond A \to \Diamond B)$;
	\item [J5] $\Diamond A \rhd A$. 
\end{description}
The inference rules of $\IL$ are Modus Ponens $\dfrac{A \to B \quad A}{B}$ and Necessitation $\dfrac{A}{\Box A}$. 

Several sublogics were introduced in~\cite{KO21}, and the basis for them is the logic $\IL^-$. 
The axioms of $\IL^-$ are $\G{1}$, $\G{2}$, $\G{3}$, $\J{3}$, and $\J{6}$: $\Box A \leftrightarrow (\neg A) \rhd \bot$. 
The inference rules of $\IL^-$ are Modus Ponens, Necessitation, $\R{1}$ $\dfrac{A \to B}{C \rhd A \to C \rhd B}$, and $\R{2}$ $\dfrac{A \to B}{B \rhd C \to A \rhd C}$. 
Let $\IL^-(\Sigma_1, \ldots, \Sigma_n)$ be the logic obtained from $\IL^-$ by adding the schemes $\Sigma_1, \ldots, \Sigma_n$ as axioms. 
The following schemes have been analyzed in previous studies. (See~\cite{DeJVis91,KO21,Okawa,Vis88}): 
\begin{description}
	\item [J2$_+$] $(A \rhd (B \lor C)) \land (B \rhd C) \to A \rhd C$; 
	\item [J4$_+$] $\Box (A \to B) \to (C \rhd A \to C \rhd B)$; 
	\item [J5$^n$] $\Diamond^n A \rhd A$ ($n \geq 1$); 
	\item [E1] $\Box(A \leftrightarrow B) \to (C \rhd A \leftrightarrow C \rhd B)$; 
	\item [E2] $\Box(A \leftrightarrow B) \to (A \rhd C \leftrightarrow B \rhd C)$; 
	\item [P] $A \rhd B \to \Box(A \rhd B)$. 
\end{description}

For these axiom schemes, it has been shown that the following holds.

\begin{fact}[cf.~\cite{KO21,Okawa}]\label{IL-fact}\leavevmode
\begin{enumerate}
	\item $\IL^- \vdash \Box(A \to B) \to (B \rhd C \to A \rhd C)$. 
	Hence, $\IL^- \vdash \mathbf{E2}$.
	\item $\IL^-(\J{4}_+) \vdash \J{4} \land \mathbf{E1}$. 
	\item $\IL^-(\J{2}_+) \vdash \J{2} \land \J{4}_+$. 
	\item $\IL^-(\J{2}) \vdash \J{4}$. 
	\item $\IL^-(\J{5}^n) \vdash \J{5}^{n+1}$ for each $n \geq 1$. 
	\item The logics $\IL$, $\IL^-(\J{1}, \J{2}, \J{5})$, and $\IL^-(\J{1}, \J{2}_+, \J{5})$ are deductively equivalent. 
\end{enumerate}

\end{fact}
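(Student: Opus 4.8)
The plan is to obtain all six items by purely syntactic derivations inside $\IL^-$, leaning throughout on two pieces of machinery. The first is $\J{6}$, which interchanges $\Box$ and $\rhd$: substituting $\neg X$ for $A$ and then applying $\R{2}$ to both directions of the tautology $\neg\neg X \leftrightarrow X$ yields the derived bridge $\IL^- \vdash \Box \neg X \leftrightarrow X \rhd \bot$. The second is the pair $\R{1}, \R{2}$, which let me replace the right (resp. left) argument of $\rhd$ by any $\IL^-$-provably weaker (resp. stronger) formula; in particular $\bot \to Y$ with $\R{1}$ always gives $X \rhd \bot \to X \rhd Y$, and any provable equivalence can be transported through $\rhd$ on either side. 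I also freely use that $\IL^-$ contains $\GL$ on the $\Box$-fragment, since L1--L3 with Modus Ponens and Necessitation axiomatize $\GL$.

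For item 1 I would use $\R{2}$ on the tautology $A \to (A\land B)\lor(A\land\neg B)$ to reduce proving $A \rhd C$ to proving $((A\land B)\lor(A\land\neg B)) \rhd C$. Under the hypotheses $\Box(A\to B)$ and $B \rhd C$, the branch $(A\land B)\rhd C$ follows from $B \rhd C$ by $\R{2}$ (as $A\land B \to B$), while the branch $(A\land\neg B)\rhd C$ follows because $\J{6}$ turns $\Box(A\to B)$ into $(A\land\neg B)\rhd\bot$, whence $\R{1}$ gives $(A\land\neg B)\rhd C$; then $\J{3}$ assembles the two branches. The statement $\mathbf{E2}$ is then immediate, since $\Box(A\leftrightarrow B)$ yields both $\Box(A\to B)$ and $\Box(B\to A)$, and two applications of item 1 give $A \rhd C \leftrightarrow B \rhd C$.

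Items 2, 4, 5 I would treat as short corollaries of the same bridge. The key observation for $\J{4}$ is that, read contrapositively, it is $A \rhd B \to (\Box\neg B \to \Box\neg A)$, i.e. transitivity-into-$\bot$. For item 4: $\Box\neg B$ gives $B \rhd \bot$ by the bridge, $\J{2}$ gives $A \rhd \bot$, and the bridge returns $\Box\neg A$. For item 2 the same argument runs with $\J{4}_+$ in place of $\J{2}$, taking $\Box(B\to\bot)$ as the boxed implication, while $\mathbf{E1}$ follows from $\J{4}_+$ exactly as $\mathbf{E2}$ followed from item 1. The $\J{2}$-half of item 3 is the trivial specialization of $\J{2}_+$ after weakening $A \rhd B$ to $A \rhd (B\lor C)$ by $\R{1}$. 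For item 5 I would invoke the $\GL$-theorem $\Diamond^{n+1}A \to \Diamond^{n}A$ (valid for $n\geq 1$, being the dual of transitivity pushed through a monotone $\Diamond^{n-1}$) and feed it to $\R{2}$ to turn the $\J{5}^n$-instance $\Diamond^{n}A \rhd A$ into $\Diamond^{n+1}A \rhd A$.

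The one genuinely non-mechanical step, and where I expect to spend the most care, is deriving $\J{4}_+$ from $\J{2}_+$ in item 3. Here $\J{6}$ turns the hypothesis $\Box(A\to B)$ into $(A\land\neg B)\rhd\bot$, hence by $\R{1}$ into $(A\land\neg B)\rhd B$; I would then weaken $C\rhd A$ to $C\rhd\bigl((A\land\neg B)\lor B\bigr)$ using $A \to \bigl((A\land\neg B)\lor B\bigr)$ and $\R{1}$, so that $\J{2}_+$ with the pair $\langle A\land\neg B,\ B\rangle$ fires and yields $C \rhd B$. Choosing this particular disjunction is the only inventive move in the whole Fact. Finally, for item 6 I would assemble the equivalences: $\IL = \IL^-(\J{1},\J{2},\J{5})$ because $\IL$ proves $\J{6}$ (via $\J{1}$ and $\J{4}$) and the rules $\R{1},\R{2}$ (via $\J{1}$ and $\J{2}$), while conversely $\J{4}$ is free by item 4; and $\IL^-(\J{1},\J{2},\J{5}) = \IL^-(\J{1},\J{2}_+,\J{5})$ because item 3 gives $\J{2}$ from $\J{2}_+$ for one inclusion, and in the presence of $\J{1}$ (which supplies $C \rhd C$) together with $\J{2},\J{3}$ the converse principle $\J{2}_+$ is derivable.
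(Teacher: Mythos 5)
Your derivations are correct, but there is nothing in the paper to compare them against: the paper states this Fact without proof, importing it from \cite{KO21,Okawa}, so your proposal is judged on its own terms. On those terms it checks out completely. The bridge $\Box\neg X \leftrightarrow X \rhd \bot$ does follow from $\J{6}$ plus $\R{2}$ applied to both directions of $X \leftrightarrow \neg\neg X$; the case split $A \to (A\land B)\lor(A\land\neg B)$ combined with $\J{3}$ correctly yields item 1, and $\mathbf{E2}$, $\mathbf{E1}$ follow from items 1 and 2 by the two-directions argument you describe. The contrapositive reading of $\J{4}$ as $A \rhd B \to (\Box\neg B \to \Box\neg A)$ makes items 2 and 4 immediate via the bridge, with $\J{2}$ (resp.\ the $\J{4}_+$-instance $\Box(B\to\bot) \to (A \rhd B \to A \rhd \bot)$) supplying the middle step. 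Your identification of the key move is also accurate: the instance of $\J{2}_+$ on the triple $\bigl(C,\ A\land\neg B,\ B\bigr)$, fed by $C \rhd \bigl((A\land\neg B)\lor B\bigr)$ (from $\R{1}$) and $(A\land\neg B) \rhd B$ (from $\J{6}$, $\R{2}$, $\R{1}$), indeed fires to give $C \rhd B$, which is the only nontrivial part of item 3. Item 5 works because $\IL^-$ contains all of $\GL$ on the $\Box$-fragment ($\G{1}$--$\G{3}$, Modus Ponens, Necessitation), so $\Diamond^{n+1}A \to \Diamond^{n}A$ ($n \geq 1$) is available and $\R{2}$ finishes. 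Item 6 is assembled exactly as in the standard literature: $\J{6}$ from $\J{1}$ and $\J{4}$, admissibility of $\R{1}$, $\R{2}$ in $\IL$ from Necessitation, $\J{1}$, $\J{2}$, the converse direction via item 4, and $\J{2}_+$ from $C \rhd C$ (supplied by $\J{1}$), $\J{3}$, and $\J{2}$.

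One point deserves to be made explicit if you write this up: your arguments repeatedly reason ``under hypotheses'' such as $\Box(A\to B)$ and $B \rhd C$, and this is legitimate only because every hypothesis is manipulated exclusively through $\IL^-$-provable implications chained by propositional logic --- Necessitation, $\R{1}$, and $\R{2}$ are applied only to theorems (tautologies or previously derived theorems), never to hypotheses. Your proposal respects this constraint throughout, but since the rules $\R{1}$, $\R{2}$, and Necessitation do not admit a naive deduction theorem, the final write-up should phrase each step as a provable implication rather than as a deduction from assumptions.
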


For any $\mathcal{L}(\rhd)$-formula $A$, let $\var(A)$ denote the set of all propositional variables appearing in $A$. 

\begin{defn}[FPP]\leavevmode
\begin{enumerate}
	\item We say that a propositional variable $p$ is \textit{modalized} in an $\mathcal{L}(\rhd)$-formula $A$ if all occurrences of $p$ in $A$ are in scope of some modal operator $\Box$ or $\rhd$. 
	\item The logic $L$ is said to have the \textit{fixed point property (FPP)} if for any propositional variable $p$ and any $\mathcal{L}(\rhd)$-formula $A(p)$ in which $p$ is modalized, there exists an $\mathcal{L}(\rhd)$-formula $F$ such that $\var(F) \subseteq \var(A(p)) \setminus \{p\}$ and $L \vdash F \leftrightarrow A(F)$. 
Such a formula $F$ is called a \textit{fixed point} of $A(p)$.
\end{enumerate}
\end{defn}

De Jongh and Visser~\cite[p.~47]{DeJVis91} proved that the logic $\IL$ has FPP. 
This result was slightly improved by showing that the fixed point theorem holds without the axiom scheme $\J{1}$, namely, the logic $\IL^-(\J{2}_+, \J{5})$ also has FPP (\cite[Corollary 5.1 and Theorem 5.2]{IKO20}). 
Moreover, it was proved that for each $n \geq 2$, the logic $\IL^-(\J{2}_+, \J{5}^n)$ has FPP (\cite[Theorem 3.21.1]{Okawa}). 
Hence, each element of the infinite decreasing sequence $\langle \IL^-(\J{2}_+, \J{5}^n) \rangle_{n \geq 1}$ of sublogics of $\IL^-(\J{2}_+, \J{5})$ has FPP, and the intersection of all the logics of the sequence is exactly $\IL^-(\J{2}_+)$ (\cite[Proposition 3.11]{Okawa}). 
It was also shown that the logic $\IL^-(\J{2}_+)$ does not have FPP (\cite[Corollary 6.2]{IKO20}). 

De Jongh and Visser also proved the fixed point theorem in another way. 
They proved that the logic containing $\G{1}$, $\G{2}$, $\G{3}$, $\PP$, $\mathbf{E1}$, and $\mathbf{E2}$, and is closed under Modus Ponens and Necessitation has FPP (\cite[Corollary 2.5.(a)]{DeJVis91}). 
From their result, it immediately follows that the logic $\IL^-(\J{4}_+, \PP)$ has FPP. 

For some technical reasons, the following weaker version of FPP was introduced in~\cite[Definition 3.8]{IKO20}: 

\begin{defn}[$\ell$FPP]\leavevmode
\begin{enumerate}
	\item We say that a propositional variable $p$ is \textit{left-modalized} in an $\mathcal{L}(\rhd)$-formula $A$ if $p$ is modalized in $A$ and for any subformula $B \rhd C$ of $A$, we have $p \notin \var(C)$. 

	\item The logic $L$ is said to have \textit{$\ell$FPP} if for any propositional variable $p$ and any $\mathcal{L}(\rhd)$-formula $A(p)$ in which $p$ is left-modalized, there exists an $\mathcal{L}(\rhd)$-formula $F$ such that $\var(F) \subseteq \var(A(p)) \setminus \{p\}$ and $L \vdash F \leftrightarrow A(F)$.
\end{enumerate}
\end{defn}

Then, it was proved that the logic $\IL^-(\J{4}, \J{5})$ has $\ell$FPP (\cite[Theorem 5.9]{IKO20}). 
Moreover, as in the case of FPP, it was proved that for each $n \geq 2$, the logic $\IL^-(\J{4}, \J{5}^n)$ also has $\ell$FPP (\cite[Theorem 3.21.2]{Okawa}). 

The fact that the logic $\IL^-(\J{4}_+, \PP)$ has FPP suggests that some weaker logics are expected to have $\ell$FPP. 
Indeed, we prove: 

\begin{thm}\label{lFPP}
The logic $\IL^-(\PP)$ has $\ell$FPP. 
\end{thm}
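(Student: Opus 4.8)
The plan is to revisit the proof of the De Jongh--Visser fixed point theorem for the logic axiomatized by $\G{1}$, $\G{2}$, $\G{3}$, $\PP$, $\mathbf{E1}$, and $\mathbf{E2}$ (\cite[Corollary 2.5.(a)]{DeJVis91}) and to track exactly where the axiom $\mathbf{E1}$ is used. By the first item of Fact~\ref{IL-fact} we have $\IL^-(\PP) \vdash \mathbf{E2}$, so $\IL^-(\PP)$ supplies every ingredient of that proof except $\mathbf{E1}$. The guiding observation is that $\mathbf{E1}$ serves only to rewrite the \emph{right-hand} argument of $\rhd$ underneath a boxed equivalence, whereas the left-modalization hypothesis is designed precisely to forbid $p$ from occurring in such a position.

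First I would establish a modal congruence lemma. Call $p$ \emph{right-free} in $A$ when $p \notin \var(C)$ for every subformula $B \rhd C$ of $A$; this is exactly the $\rhd$-part of left-modalization and, unlike being modalized, it is inherited by subformulas. I would prove by induction on $A$ that right-freeness of $p$ implies $\IL^-(\PP) \vdash \Box(p \leftrightarrow q) \to \Box(A(p) \leftrightarrow A(q))$. The Boolean cases are routine; the case $A = \Box B$ uses normality of $\Box$ together with the transitivity principle $\Box X \to \Box\Box X$ (derivable from $\G{1}$, $\G{2}$, $\G{3}$) to move $\Box(p \leftrightarrow q)$ inward. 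The decisive case $A = B \rhd C$ would, in the general De Jongh--Visser argument, rewrite the left argument by $\mathbf{E2}$ and the right argument by $\mathbf{E1}$, using
\[
\Box(B(p) \leftrightarrow B(q)) \land \Box(C(p) \leftrightarrow C(q)) \to \bigl((B(p) \rhd C(p)) \leftrightarrow (B(q) \rhd C(q))\bigr).
\]
But right-freeness forces $C(p)$ and $C(q)$ to be literally the same formula, so the conjunct $\Box(C(p) \leftrightarrow C(q))$ is trivial and the $\mathbf{E1}$-step vanishes; only $\mathbf{E2}$ survives, and it is available in $\IL^-(\PP)$. Finally, whenever $p$ is in addition modalized, one strips the outer $\Box$ to obtain $\IL^-(\PP) \vdash \Box(p \leftrightarrow q) \to (A(p) \leftrightarrow A(q))$ for left-modalized $A$.

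Granting this congruence lemma, the fixed point is produced by the L\"ob-style construction underlying \cite[Corollary 2.5.(a)]{DeJVis91}. The essential function of $\PP$ is to render each subformula $B \rhd C$ persistent, $B \rhd C \to \Box(B \rhd C)$, so that an occurrence of $p$ modalized by $\rhd$ can be solved for in precisely the same manner as one modalized by $\Box$, the fixed point equation being discharged through the L\"ob axiom $\G{3}$. Throughout this process $p$---and every $p$-free approximant substituted for it---stays confined to $\Box$-contexts and to left arguments of $\rhd$, so the congruence lemma is invoked only in its $\mathbf{E1}$-free form; the resulting $p$-free formula $F$ then satisfies $\IL^-(\PP) \vdash F \leftrightarrow A(F)$ with $\var(F) \subseteq \var(A(p)) \setminus \{p\}$.

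The principal difficulty is bookkeeping rather than any single hard estimate: one must verify that the confinement of $\mathbf{E1}$ to right-of-$\rhd$ rewriting persists through the \emph{entire} construction, and not merely through the congruence lemma. The invariant to maintain is that every formula arising in the construction keeps $p$ right-free, so that each appeal to congruence falls under the $\mathbf{E1}$-free case above. Once this invariant is secured, the De Jongh--Visser argument transfers to $\IL^-(\PP)$ and yields $\ell$FPP.
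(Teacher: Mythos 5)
Your proposal is correct and follows essentially the same route as the paper: the paper's proof is exactly an adaptation of the de Jongh--Visser argument in which your ``$\mathbf{E1}$-free congruence lemma'' is the substitution lemma (cited there as Fact~\ref{Sbsti} from~\cite{IKO20}), usable precisely because left-modalization keeps $p$ out of right arguments of $\rhd$. The paper makes the final construction concrete rather than deferring it: after the standard reduction to formulas $A(p) \rhd B$ with $p$ left-modalized, the fixed point is $F :\equiv A(\top) \rhd B$, verified using Fact~\ref{Sbsti}.2, two applications of $\PP$ (to $F$ and to $A(F) \rhd B$), and L\"ob's axiom --- which is the same de Jongh--Visser fixed point your sketch invokes.
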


In the proof, we use the following fact. 

\begin{fact}[{\cite[Proposition~3.6]{IKO20}}]\label{Sbsti}
Let $A, B$, and $C(p)$ be $\mathcal{L}(\rhd)$-formulas. 
\begin{enumerate}
	\item If $p \notin \var(E)$ for every subformula $D \rhd E$ of $C(p)$, then 
\[
	\IL^- \vdash (A \leftrightarrow B) \land \Box (A \leftrightarrow B) \to \bigl(C(A) \leftrightarrow C(B)\bigr).
\] 
	\item If $p$ is left-modalized in $C(p)$, then 
\[
	\IL^- \vdash \Box(A \leftrightarrow B) \to \bigl(C(A) \leftrightarrow C(B)\bigr).
\]
\end{enumerate}
\end{fact}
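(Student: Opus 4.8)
The plan is to prove both parts by structural induction on $C(p)$, establishing part~1 first as a self-contained induction and then using it to drive the modal cases of part~2. Throughout, the only interpretability-specific tool I need is the scheme $\mathbf{E2}$, which by Fact~\ref{IL-fact}.1 is already available in $\IL^-$: namely $\IL^- \vdash \Box(X \leftrightarrow Y) \to (X \rhd Z \leftrightarrow Y \rhd Z)$. The remaining ingredients are purely $\GL$-theoretic: the distribution axiom $\G{2}$, the derived transitivity $\Box \psi \to \Box\Box\psi$ (provable from $\G{1}$--$\G{3}$), necessitation, and the $\K$-facts $\Box(X \land Y) \leftrightarrow \Box X \land \Box Y$ and $\Box(X \leftrightarrow Y) \to (\Box X \leftrightarrow \Box Y)$.

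For part~1, write $\psi := A \leftrightarrow B$. The atomic cases are immediate: if $C(p)=p$ the claim is $\psi \land \Box\psi \to \psi$, and if $C(p)$ is $\bot$ or a variable other than $p$ then $C(A)=C(B)$. The Boolean cases follow from the induction hypotheses for the immediate subformulas by propositional logic, since the hypothesis ``$p \notin \var(E)$ for every subformula $D \rhd E$'' is inherited by subformulas. For the case $C(p) = \Box C_1(p)$, I apply necessitation and $\G{2}$ to the induction hypothesis $\psi \land \Box\psi \to (C_1(A) \leftrightarrow C_1(B))$ to obtain $\Box(\psi \land \Box\psi) \to \Box(C_1(A) \leftrightarrow C_1(B))$; rewriting $\Box(\psi \land \Box\psi)$ as $\Box\psi \land \Box\Box\psi$ and discharging the second conjunct via $\Box\psi \to \Box\Box\psi$ yields $\Box\psi \to \Box(C_1(A) \leftrightarrow C_1(B))$, whence $\Box\psi \to (\Box C_1(A) \leftrightarrow \Box C_1(B))$. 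The decisive case is $C(p) = C_1(p) \rhd C_2(p)$: here the hypothesis forces $p \notin \var(C_2)$, so $C_2(A)=C_2(B)$, while $C_1$ still satisfies the part~1 hypothesis. The same necessitation argument gives $\Box\psi \to \Box(C_1(A) \leftrightarrow C_1(B))$, and then $\mathbf{E2}$ converts this into $C_1(A) \rhd C_2 \leftrightarrow C_1(B) \rhd C_2$. Since the antecedent $\psi \land \Box\psi$ implies $\Box\psi$ in every modal case, part~1 follows.

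For part~2, I again induct on $C(p)$, now under the assumption $\Box\psi$ alone and the hypothesis that $p$ is left-modalized in $C(p)$. When $p$ does not occur, the claim is trivial; the case $C(p)=p$ cannot arise, since then $p$ would be unmodalized. In the Boolean cases, left-modalizedness descends to each immediate subformula in which $p$ occurs, so the part~2 induction hypothesis applies. The point of the argument is that in the two modal cases one no longer needs the full part~2 hypothesis: for $C(p) = \Box C_1(p)$ and for $C(p) = C_1(p) \rhd C_2(p)$, every occurrence of $p$ lies within the outermost modal operator, so $C_1$ need not be left-modalized, but it does satisfy the weaker part~1 hypothesis (no $p$ to the right of any $\rhd$). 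I therefore invoke part~1 on $C_1$, which as shown yields $\Box\psi \to \Box(C_1(A) \leftrightarrow C_1(B))$, and finish exactly as in part~1: by $\K$-reasoning for the $\Box$ case and by $\mathbf{E2}$ for the $\rhd$ case (using again $p \notin \var(C_2)$).

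The main obstacle --- really the conceptual content --- is the $\rhd$ case, and in particular understanding why the two hypotheses are shaped as they are. Substitution on the left argument of $\rhd$ is exactly what $\mathbf{E2}$ delivers from a boxed equivalence, but $\IL^-$ proves no analogue for the right argument: the scheme $\mathbf{E1}$, $\Box(X \leftrightarrow Y) \to (Z \rhd X \leftrightarrow Z \rhd Y)$, is genuinely stronger and is not available without, say, $\J{4}_+$. This is precisely why both parts must forbid $p$ from occurring to the right of any $\rhd$. The only other point requiring care is bookkeeping the passage from part~2 back to part~1 in the modal cases, and checking that left-modalizedness of $C$ indeed guarantees the part~1 hypothesis for the relevant subformula; once this is set up, each step is routine $\GL$- and propositional reasoning.
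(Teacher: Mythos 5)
Your proof is correct: the double induction works, the inheritance of the part~1 hypothesis (and of left-modalizedness) by subformulas is checked where it matters, and the modal cases correctly reduce to part~1 via necessitation, $\Box\psi \to \Box\Box\psi$, and $\mathbf{E2}$, which is exactly why the right argument of $\rhd$ must be $p$-free in both parts. The paper itself states this as a Fact cited from \cite[Proposition~3.6]{IKO20} without reproducing a proof, and your argument is the standard one underlying that citation, so there is nothing substantive to compare beyond noting agreement.
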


\begin{proof}[Proof of Theorem~\ref{lFPP}]
As in a usual proof of the fixed point theorem, it suffices to find a fixed point of $A(p) \rhd B$ in which $p$ is left-modalized. 
Let $F$ be $A(\top) \rhd B$, and we prove that $F$ is a fixed point of $A(p) \rhd B$ in $\IL^-(\PP)$. 

Obviously, $\var(F) \subseteq \var\bigl(A(p) \rhd B\bigr) \setminus \{p\}$. 
We show $\IL^-(\PP) \vdash F \leftrightarrow A(F) \rhd B$. 
Since $\IL^- \vdash F \to (\top \leftrightarrow F)$, we have $\IL^- \vdash \Box F \to \Box(\top \leftrightarrow F)$. 
By Fact~\ref{Sbsti}.2, we have $\IL^- \vdash \Box F \to \bigl(A(\top) \rhd B \leftrightarrow A(F) \rhd B\bigr)$. 
Hence, 
\begin{eqnarray}\label{eq2}
\IL^- \vdash \Box F \to (F \leftrightarrow A(F) \rhd B). 
\end{eqnarray}
Therefore, $\IL^-(\PP) \vdash F \to A(F) \rhd B$ because $\IL^-(\PP)$ proves $F \to \Box F$. 

By the right-to-left direction of (\ref{eq2}),  $\IL^- \vdash A(F) \rhd B \to (\Box F \to F)$, and then $\IL^- \vdash \Box(A(F) \rhd B) \to \Box (\Box F \to F)$. 
Hence, $\IL^- \vdash \Box(A(F) \rhd B) \to \Box F$. 
We get $\IL^-(\PP) \vdash A(F) \rhd B \to \Box  F$. 
By (\ref{eq2}) again, $\IL^-(\PP) \vdash A(F) \rhd B \to F$. 

We conclude $\IL^-(\PP) \vdash F \leftrightarrow A(F) \rhd B$. 
\end{proof}

Notice that our proof of Theorem~\ref{lFPP} is essentially the same as de Jongh and Visser's proof of FPP for $\IL^-(\J{4}_+, \PP)$. 
Then, our fixed points obtained in our proof are the same as ones given by de Jongh and Visser. 

The properties FPP and $\ell$FPP are closely related to the Craig interpolation property. 

\begin{defn}[CIP]
We say that a logic $L$ has the \textit{Craig interpolation property (CIP)} if for any $\mathcal{L}(\rhd)$-formulas $A$ and $B$, if $L \vdash A \to B$, then there exists an $\mathcal{L}(\rhd)$-formula $C$ such that $\var(C) \subseteq \var(A) \cap \var(B)$, $L \vdash A \to C$, and $L \vdash C \to B$. 
\end{defn}
Areces, Hoogland, and de Jongh~\cite[Theorem 1]{AHD01} proved that $\IL$ has CIP. 
Moreover, it was proved that $\IL^-(\J{2}_+, \J{5})$ has CIP, but some sublogics including $\IL^-$ do not have CIP (See \cite{IKO20}). 
The failure of CIP for some logics was derived by showing the failure of FPP for these logics through the following fact. 

\begin{fact}[{\cite[Lemmas 3.10 and 3.11]{IKO20}}]\label{CIPFPP}
Let $L$ be any extension of $\IL^-$ that is closed under substituting a formula for a propositional variable. 
\begin{enumerate}
	\item If $L$ has CIP, then $L$ has $\ell$FPP. 
	\item If $L$ is an extension of $\IL^-(\J{4}_+)$ and has CIP, then $L$ has FPP. 
\end{enumerate}
\end{fact}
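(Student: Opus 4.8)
The plan is to extract explicit fixed points from interpolation by way of the uniqueness of fixed points, in the style of a Beth-definability argument. Both parts are proved by one uniform construction, the only difference being which substitution lemma is available. So fix a formula $A(p)$ with $p$ left-modalized (for part~1) or modalized (for part~2), let $q$ be a fresh variable, and let $\bar r$ abbreviate the parameter set $\var(A(p)) \setminus \{p\}$; I will construct a fixed point $F$ with $\var(F) \subseteq \bar r$ directly, so no reduction to a special form of $A$ is needed.

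First I would secure a substitution lemma and boxed uniqueness. The lemma I need is
\[
	L \vdash \Box(p \leftrightarrow q) \to \bigl(A(p) \leftrightarrow A(q)\bigr),
\]
which I call (S). When $p$ is left-modalized this is precisely Fact~\ref{Sbsti}.2, giving part~1. When $p$ is only modalized, (S) instead rests on $\mathbf{E1}$ and $\mathbf{E2}$: under $\IL^-(\J{4}_+)$ we have $\mathbf{E1}$ by Fact~\ref{IL-fact}.2 and $\mathbf{E2}$ already in $\IL^-$ by Fact~\ref{IL-fact}.1, and together they permit replacing $q$ by $p$ under $\Box$ on either side of every $\rhd$ of $A$; this is exactly why part~2 needs the $\J{4}_+$ hypothesis. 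Granting (S), boxed uniqueness $L \vdash \Box(p \leftrightarrow A(p)) \land \Box(q \leftrightarrow A(q)) \to \Box(p \leftrightarrow q)$ follows by the standard $\GL$ argument: writing $h$ for $(p \leftrightarrow A(p)) \land (q \leftrightarrow A(q))$ and $X$ for $p \leftrightarrow q$, (S) and the two equivalences give $h \land \Box X \to X$; necessitating and then applying L\"ob's axiom $\G{3}$ in the form $\Box(\Box X \to X) \to \Box X$ yields $\Box h \to \Box X$, which is the desired implication.

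The key step is the choice of implication to interpolate. From boxed uniqueness together with (S) one obtains
\[
	L \vdash \bigl(\Box(p \leftrightarrow A(p)) \land A(p)\bigr) \to \bigl(\Box(q \leftrightarrow A(q)) \to A(q)\bigr).
\]
Its antecedent has variables $\{p\} \cup \bar r$ and its succedent $\{q\} \cup \bar r$, so CIP supplies an interpolant $F$ with $\var(F) \subseteq \bar r = \var(A(p)) \setminus \{p\}$ and
\[
	L \vdash \Box(p \leftrightarrow A(p)) \land A(p) \to F, \qquad L \vdash F \to \bigl(\Box(q \leftrightarrow A(q)) \to A(q)\bigr).
\]
It is essential that $A(p)$, and not $p$, occupies the antecedent and succedent: interpolating the naive implication with $p$ there makes every self-substitution trivial, whereas the presence of $A$ is what produces the term $A(F)$ we need.

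Finally I would verify that $F$ is a fixed point. Since $L$ is closed under substitution and $p, q \notin \var(F)$, substituting $p := F$ in the first inequality and $q := F$ in the second yields $L \vdash \Box(F \leftrightarrow A(F)) \land A(F) \to F$ and $L \vdash \Box(F \leftrightarrow A(F)) \land F \to A(F)$, whence $L \vdash \Box(F \leftrightarrow A(F)) \to (F \leftrightarrow A(F))$. Writing $Z$ for $F \leftrightarrow A(F)$, necessitation gives $L \vdash \Box(\Box Z \to Z)$, so $\G{3}$ yields $L \vdash \Box Z$ and therefore $L \vdash Z$, that is, $L \vdash F \leftrightarrow A(F)$ with the required variable condition. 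The main obstacle is the joint packaging of the two ingredients: establishing boxed uniqueness in the appropriate strength (this is exactly where the left-modalized hypothesis for $\ell$FPP and the $\J{4}_+$ hypothesis for FPP enter, through the available form of (S)), and then selecting an implication whose interpolant, after self-substitution, can be closed off by L\"ob's axiom.
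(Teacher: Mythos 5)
Your proposal is correct and takes essentially the same route as the proof the paper relies on (cf.\ the remark in Section~\ref{Sec_SV} that, by \cite[Lemma 3.10]{IKO20}, CIP together with uniqueness of fixed points yields FPP): a Beth-definability-style argument that interpolates the implication expressing uniqueness, where the needed substitution lemma is Fact~\ref{Sbsti}.2 for the left-modalized case and the $\mathbf{E1}$/$\mathbf{E2}$-based lemma over $\IL^-(\J{4}_+)$ (via Fact~\ref{IL-fact}.1 and \ref{IL-fact}.2) for the modalized case. All steps check out, including the variable bookkeeping for the interpolant and the final L\"ob-axiom closure.
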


In the next section, we prove that $\IL^-(\PP)$ has CIP. 
Then, we also obtain an alternative proof of Theorem~\ref{lFPP}.

\section{Proof theoretic aspects of weak interpretability logic with persistence}\label{Sec_CE}

In this section, we investigate proof theoretic aspects of $\IL^-(\PP)$. 
This section consists of three subsections. 
In the first subsection, we introduce the systems $\ILms$ and $\ILmPs$ of sequent calculi and prove that they exactly correspond to $\IL^-$ and $\IL^-(\PP)$, respectively. 
The second subsection is devoted to proving the cut-elimination theorem for $\ILmPs$. 
In the last subsection, we show that Maehara's method \cite{Mae61} for proving the existence of interpolants for sequents with a cut-free proof can be applied to $\ILms$ and $\ILmPs$. 
As a consequence, we obtain that $\IL^-(\PP)$ enjoys CIP, but the cut-elimination theorem does not hold for $\ILms$. 

\subsection{Sequent calculi of weak interpretability logics}

As an exception in the present paper, the language of our systems of sequent calculi does not contain the modal symbol $\Box$, and we assume that $\Box A$ is an abbreviation of $(\neg A )\rhd \bot$. 
This corresponds to the scheme $\J{6}$. 
Throughout this section, capital Greek letters $\Gamma, \Delta, \Sigma, \ldots$ always denote finite \textit{sets} of $\mathcal{L}(\rhd)$-formulas. 
As usual, for each $\mathcal{L}(\rhd)$-formula $A$, the expressions $\Gamma, \Delta$ and $\Gamma, A$ denote $\Gamma \cup \Delta$ and $\Gamma \cup \{A\}$, respectively. 
Let $\Box \Gamma$ be the set $\{\Box A \mid A \in \Gamma\}$. 
A \textit{sequent} is an expression of the form $\Gamma \Rightarrow \Delta$.

\begin{defn}[Sequent calculus $\ILms$]
The system $\ILms$ is defined by the following initial sequents and inference rules:
\begin{description}
\item[Initial sequents]
\begin{align*}
A \Rightarrow A & &  \bot \Rightarrow
\end{align*}
\item[Logical rules]
\begin{align*}
\infer[(\WL)]{\Gamma, A \Rightarrow \Delta}{\Gamma \Rightarrow \Delta} & &
\infer[(\WR)]{\Gamma \Rightarrow A, \Delta}{\Gamma \Rightarrow \Delta} \\
\infer[(\NL)]{\Gamma, \neg A \Rightarrow \Delta}{\Gamma \Rightarrow A, \Delta} & &
\infer[(\NR)]{\Gamma \Rightarrow \neg A, \Delta}{\Gamma, A \Rightarrow \Delta} \\
\infer[(\land\mathrm{l})]{\Gamma, A_1 \land A_2 \Rightarrow \Delta}{\Gamma, A_i \Rightarrow \Delta \ (i =1,2)} & &
\infer[(\land\mathrm{r})]{\Gamma \Rightarrow A \land B, \Delta}
	{\Gamma \Rightarrow A, \Delta & \Gamma \Rightarrow B, \Delta} \\
\infer[(\lor\mathrm{l})]{\Gamma, A \lor B \Rightarrow \Delta}
	{\Gamma, A \Rightarrow \Delta & \Gamma, B \Rightarrow \Delta} & &
\infer[(\lor\mathrm{r})]{\Gamma \Rightarrow A_1 \lor A_2, \Delta}{\Gamma \Rightarrow A_i, \Delta \ (i=1,2)} \\
\infer[(\to \mathrm{l})]{\Gamma, A \to B \Rightarrow \Delta}
	{\Gamma \Rightarrow A, \Delta & \Gamma, B \Rightarrow \Delta} & &
\infer[(\to \mathrm{r})]{\Gamma \Rightarrow A \to B, \Delta}{\Gamma, A \Rightarrow B, \Delta} 
\end{align*}
\item[Cut rule]
\[
\infer[(\mathrm{cut})]{\Gamma, \Sigma \Rightarrow \Delta, \Pi}{\Gamma \Rightarrow \Delta, A & A, \Sigma \Rightarrow \Pi} 
\]
\item[Modal rules]
\[
\infer[(\Box)]{\Box \Gamma \Rightarrow \Box A}{\Box \Gamma, \Gamma, \Box A \Rightarrow A}
\quad
\infer[(\rhd)]{\{ X_i \rhd Y_i \mid i < n \} \Rightarrow A \rhd B}{ A \Rightarrow \{ X_i \mid i < n\} & \langle Y_i \Rightarrow B \rangle_{i < n} }
\]
\end{description}
In the rule $(\mathrm{cut})$, the formula $A$ is said to be the \textit{cut-formula}. 
In the rule $(\rhd)$, $n$ is some natural number, which is possibly $0$. 
Also, in the rule $(\rhd)$, $\langle Y_i \Rightarrow B \rangle_{i < n}$ indicates that all $n$ elements of this sequence are upper sequents of the rule, allowing for duplication. 
\end{defn}

Note that our system of sequent calculus is set-based, and hence the structural rules are only $(\WL)$ and $(\WR)$. 
If there is no room for misreading, repeated trivial applications of propositional rules are sometimes abbreviated by a double line. 
Notice that the rule $(\Box)$ is known as the rule for $\GL$, that is, it corresponds to $\G{1}$, $\G{2}$, $\G{3}$, and Necessitation. 
Also, the following derivation shows that the rule $\dfrac{\Gamma \Rightarrow A}{\Box \Gamma \Rightarrow \Box A}$ for the smallest normal modal logic $\K$ is admissible only from the rule $(\rhd)$: For $\Gamma = \{X_0, \ldots, X_{n-1}\}$, 
\[
	\infer[(\rhd)]{\{(\neg X_i) \rhd \bot \mid i < n\} \Rightarrow (\neg A) \rhd \bot}
	{\infer=[(\neg \mathrm{l})\text{ and }(\neg\mathrm{r})\text{s}]{\neg A \Rightarrow \{\neg X_i \mid i < n\}}
	{\{X_i \mid i < n\} \Rightarrow A}
	&
	\overbrace{\bot \Rightarrow \bot}^{n}
	}.
\]

For the sake of simplicity, we sometimes abbreviate the set $\{X_i \mid i < n\}$ of formulas as $\{X_i\}$, but we believe that this will not cause any confusions.

We prove that the system $\ILms$ is equivalent to $\IL^-$. 

\begin{prop}\label{IL^-equiv}
Let $A$ be any $\mathcal{L}(\rhd)$-formula.  
\begin{enumerate}
	\item If $\IL^- \vdash A$, then $\ILms \vdash (\Rightarrow A)$. 
	\item If $\ILms \vdash \Gamma \Rightarrow \Delta$, then $\IL^- \vdash \bigwedge \Gamma \to \bigvee \Delta$. 
\end{enumerate}
\end{prop}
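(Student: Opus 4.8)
The plan is to prove the two directions separately, since they are genuinely different in character. For part (1), the strategy is the standard simulation of a Hilbert-style system inside a sequent calculus: I would verify that every axiom of $\IL^-$ has a (cut-free, or at least derivable) proof of its sequent form, and that each inference rule of $\IL^-$ is admissible in $\ILms$. The propositional axioms $\G{1}$ and the two distribution/transitivity axioms $\G{2}$, $\G{3}$ follow from the remarks in the text that the rule $(\Box)$ is exactly the $\GL$-rule, so $\Box$-reasoning is available. For $\J{3}$, i.e. $(A \rhd C) \land (B \rhd C) \to (A \lor B) \rhd C$, I would build a derivation ending in $(\rhd)$ with the two premises $A \rhd C$ and $B \rhd C$ in the antecedent; the left premise of $(\rhd)$ becomes $A \lor B \Rightarrow \{A, B\}$, provable by $(\lor\mathrm{l})$, and the right premises are instances of $C \Rightarrow C$. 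For $\J{6}$, $\Box A \leftrightarrow (\neg A) \rhd \bot$, the two sides are literally definitionally equal once $\Box A$ is read as $(\neg A) \rhd \bot$, so this is immediate. Modus Ponens is simulated by $(\mathrm{cut})$; Necessitation by $(\Box)$ with empty $\Gamma$; and the replacement rules $\R{1}$, $\R{2}$ are handled by a cut against the $(\rhd)$-rule, using that from $A \Rightarrow B$ one can feed the appropriate premise into $(\rhd)$ to derive $C \rhd A \Rightarrow C \rhd B$ (for $\R{1}$) and $B \rhd C \Rightarrow A \rhd C$ (for $\R{2}$).

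For part (2), the soundness-style direction, I would argue by induction on the length of the $\ILms$-derivation of $\Gamma \Rightarrow \Delta$, showing at each step that $\IL^- \vdash \bigwedge \Gamma \to \bigvee \Delta$. The initial sequents and all propositional rules, weakenings, and the cut rule translate into entirely routine propositional reasoning available in $\G{1}$, so these cases are mechanical. The two genuinely modal cases are $(\Box)$ and $(\rhd)$. For $(\Box)$, the induction hypothesis gives $\IL^- \vdash \bigwedge \Box\Gamma \land \bigwedge \Gamma \land \Box A \to A$, and I would use Necessitation together with $\G{2}$ (distribution) and $\G{3}$ (the L\"ob-type axiom, or its $\GL$-consequences) to derive $\IL^- \vdash \bigwedge \Box\Gamma \to \Box A$, exactly as in the standard argument that the $(\Box)$-rule is sound over $\GL$.

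The most delicate case, and the one I expect to be the main obstacle, is the $(\rhd)$-rule. Here the induction hypotheses are $\IL^- \vdash A \to \bigvee_{i<n} X_i$ and, for each $i<n$, $\IL^- \vdash Y_i \to B$, and I must derive $\IL^- \vdash \bigwedge_{i<n}(X_i \rhd Y_i) \to A \rhd B$. The plan is: first, from $Y_i \to B$ and the rule $\R{1}$, obtain $\IL^- \vdash X_i \rhd Y_i \to X_i \rhd B$; second, using $\J{3}$ repeatedly, combine the hypotheses $\{X_i \rhd B\}_{i<n}$ into $\IL^- \vdash \bigwedge_{i<n}(X_i \rhd B) \to (\bigvee_{i<n} X_i) \rhd B$; third, from $A \to \bigvee_{i<n} X_i$ and $\R{2}$, obtain $\IL^- \vdash (\bigvee_{i<n} X_i) \rhd B \to A \rhd B$; finally chain these three implications. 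The subtlety to handle carefully is the boundary case $n=0$: then the left premise is $A \Rightarrow$ (empty succedent), so the hypothesis is $\IL^- \vdash A \to \bot$, there are no right premises, and the conclusion $\top \to A \rhd B$ must follow. This requires deriving $A \rhd B$ from $\IL^- \vdash A \to \bot$; I would obtain this from $\bot \rhd B$ (provable from $\J{3}$ as the empty-disjunction case, or directly) together with $\R{2}$ applied to $A \to \bot$. Verifying that the $n=0$ case and the repeated $\J{3}$-combination go through cleanly in $\IL^-$ (which lacks $\J{2}$) is where the care is needed.

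\begin{proof}[Proof of Proposition~\ref{IL^-equiv}]
We sketch the argument; a detailed verification is routine.
\end{proof}
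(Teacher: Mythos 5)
Your proposal is correct and follows essentially the same route as the paper: part (1) by simulating the axioms and rules of $\IL^-$ in $\ILms$ (the $\J{3}$ sequent via $(\rhd)$ with $n=2$, $\R{1}$ and $\R{2}$ via $(\rhd)$ with $n=1$, $\J{6}$ being definitional, Modus Ponens via cut), and part (2) by reducing everything to the admissibility of the $(\rhd)$ rule in $\IL^-$, proved via $\R{1}$, iterated applications of $\J{3}$, and $\R{2}$, exactly as in the paper. Your explicit flagging of the $n=0$ boundary case is in fact more careful than the paper (which glosses over it); the only imprecision is that $\bot \rhd B$ is not literally an ``empty-disjunction instance'' of the binary axiom $\J{3}$, but, as your ``or directly'' hedge suggests, it does follow in $\IL^-$ from $\J{6}$ (which gives $(\neg\top) \rhd \bot$ via Necessitation of $\top$), then $\R{2}$ applied to $\bot \to \neg\top$, and $\R{1}$ applied to $\bot \to B$.
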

\begin{proof}
1. We prove the proposition by induction on the length of proofs in $\IL^-$. 
Notice that the axiom scheme $\J{6}$ is built in $\ILms$. 
The following derivation shows that $\ILms$ proves the sequents corresponding to the axiom scheme $\J{3}$. 
\[
\infer[(\rhd): n = 2]{A \rhd C, B \rhd C \Rightarrow (A \lor B) \rhd C}
		{A \lor B \Rightarrow A, B
	&
	C \Rightarrow C
	&
	C \Rightarrow C
	}. 
\]
Then, it suffices to show that the rules $\R{1}$ and $\R{2}$ are admissible in $\ILms$. 

\begin{description}
	\item [R1]
\[
	\infer[(\rhd): n  = 1]{C \rhd A \Rightarrow C \rhd B}
		{C \Rightarrow C
	&
	A \Rightarrow B
	}.
\]

	\item [R2]
\[
	\infer[(\rhd): n = 1]{B \rhd C \Rightarrow A \rhd C}
	{A \Rightarrow B
	&
	C \Rightarrow C
	}.
\]
\end{description}

2. It is sufficient to show that the rule $(\rhd)$ is admissible in $\IL^-$. 
Suppose that $\IL^- \vdash A \to \bigvee \{X_i\}$ and $\IL^- \vdash Y_j \to B$ for each $j < n$. 
Since $\IL^- \vdash Y_i \to \bigvee \{Y_j\}$ for each $i < n$, we have $\IL^- \vdash X_i \rhd Y_i \to X_i \rhd \bigvee \{Y_j\}$ by the rule $\R{1}$. 
Then, $\IL^- \vdash \bigwedge \{X_i \rhd Y_i \} \to \bigwedge \{X_i \rhd \bigvee \{Y_j\} \}$. 
By the $n-1$ times applications of $\J{3}$, we obtain 
\begin{equation}\label{seq1}
	\IL^- \vdash \bigwedge \{X_i \rhd Y_i\} \to \left( \bigvee \{X_i\} \right) \rhd \bigvee \{Y_j\}. 
\end{equation}
Since $\IL^- \vdash A \to \bigvee \{X_i\}$, by the rule $\R{2}$, we have
\begin{equation}\label{seq2}
	\IL^- \vdash \left( \bigvee \{X_i\} \right) \rhd \bigvee\{Y_j\} \to A \rhd \bigvee \{Y_j\}. 
\end{equation}
Since $\IL^- \vdash \bigvee \{Y_j\} \to B$, we obtain
\[
	\IL^- \vdash A \rhd \bigvee \{Y_j\} \to A \rhd B
\]
by $\R{1}$. 
By combining this with (\ref{seq1}) and (\ref{seq2}), we conclude
\[
	\IL^- \vdash \bigwedge \{X_i \rhd Y_i\} \to A \rhd B. \tag*{\mbox{\qedhere}}
\] 
\end{proof}

We will prove the failure of the cut-elimination theorem for $\ILms$ at the end of this section. 
In contrast, when we add the persistence principle $\PP$ to $\ILms$, the cut-elimination method works well. 
We introduce $\ILmPs$ the system of sequent calculus for $\IL^-(\PP)$. 
For the sake of simplicity, throughout this section, $\Omega$ and $\Theta$, with subscripts in some cases, always denote finite sets of $\mathcal{L}(\rhd)$-formulas of the forms $C \rhd D$. 
Then, $\IL^-(\PP) \vdash \bigwedge \Omega \to \Box \bigwedge \Omega$. 

\begin{defn}[Sequent calculus $\ILmPs$]
The system $\ILmPs$ is obtained from the system $\ILms$ by replacing the two modal rules $(\Box)$ and $(\rhd)$ with the following single modal rule $(\rhd_\PP)$:
\begin{center}
$\infer[(\rhd_\PP)]{\Omega, \{ X_i \rhd Y_i \mid i < n\} \Rightarrow A \rhd B}
{\Omega, \{ X_i \rhd Y_i \mid i < n\}, A \rhd B, A \Rightarrow \{ X_i \mid i < n\}
&
\langle Y_i \Rightarrow B \rangle_{i < n}
}.$
\end{center}
In the rule, the set $\Omega$ is possibly empty. 
The elements of $\{X_i \rhd Y_i \mid i < n\}$ are said to be the \textit{principal formulas} of the rule.  
Also, the formula $A \rhd B$ is said to be the \textit{diagonal formula} of the rule.
\end{defn}

The following proposition states the equivalence between $\IL^-(\PP)$ and $\ILmPs$.

\begin{prop}
Let $A$ be any $\mathcal{L}(\rhd)$-formula.  
\begin{enumerate}
	\item If $\IL^-(\PP) \vdash A$, then $\ILmPs \vdash (\Rightarrow A)$. 
	\item If $\ILmPs \vdash \Gamma \Rightarrow \Delta$, then $\IL^-(\PP) \vdash \bigwedge \Gamma \to \bigvee \Delta$. 
\end{enumerate}
\end{prop}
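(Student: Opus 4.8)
The plan is to mirror the structure of Proposition~\ref{IL^-equiv}, establishing each direction separately. For the two directions the main task is to account for the single combined rule $(\rhd_\PP)$ in place of the two rules $(\Box)$ and $(\rhd)$.

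For part~1, I would argue by induction on the length of the $\IL^-(\PP)$-proof of $A$. Since $\ILmPs$ is obtained from $\ILms$ by replacing the modal rules, and since the propositional initial sequents, logical rules, and the cut rule are unchanged, the only work is to reprove everything that previously relied on $(\Box)$ and $(\rhd)$. Concretely I must check: (i) that the axioms $\G{1}$, $\G{2}$, $\G{3}$ (equivalently, the $\GL$-behaviour of $\Box$, which under $\J{6}$ is read as $(\neg A)\rhd\bot$), the rules $\R{1}$, $\R{2}$, and the scheme $\J{3}$ are all derivable using $(\rhd_\PP)$; and (ii) that the new axiom $\PP$, i.e.\ $A \rhd B \to \Box(A \rhd B)$, is derivable. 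For (i) the earlier derivations for $\J{3}$, $\R{1}$, $\R{2}$ carry over by taking $\Omega=\emptyset$ and copying the upper sequents into the single premise of $(\rhd_\PP)$; for the $\GL$-rule $(\Box)$ I would recover it as the special case where the conclusion has the form $\Box\Gamma \Rightarrow \Box A = (\neg A)\rhd\bot$, using that the diagonal formula $A\rhd B$ now appears in the left premise (this is precisely what encodes the $\GL$ reflexion that $(\Box)$ provided). For (ii), the presence of $\Omega$ on the left of $(\rhd_\PP)$ together with the diagonal formula is exactly tailored to persistence: I would derive $A\rhd B \Rightarrow (\neg(A\rhd B))\rhd\bot$ by an application of $(\rhd_\PP)$ with a single principal formula and $\Omega=\{A\rhd B\}$, the point being that $\Omega$ survives into the upper sequent so that $A\rhd B$ is available to close it.

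For part~2 I would show the rule $(\rhd_\PP)$ is admissible in $\IL^-(\PP)$, i.e.\ that whenever the arithmetical readings of the premises are $\IL^-(\PP)$-provable, so is the reading of the conclusion. The translation of the conclusion is $\bigwedge\Omega \land \bigwedge\{X_i\rhd Y_i\} \to A\rhd B$. The key extra ingredient compared with Proposition~\ref{IL^-equiv}.2 is the stated fact that $\IL^-(\PP)\vdash\bigwedge\Omega\to\Box\bigwedge\Omega$, which lets me internalize $\Omega$ and the diagonal formula $A\rhd B$ as boxed hypotheses. I expect to reuse the computation~(\ref{seq1})--(\ref{seq2}) from Proposition~\ref{IL^-equiv} essentially verbatim to handle the $\{X_i\rhd Y_i\}$ part, and then to feed in the left premise $\Omega,\{X_i\rhd Y_i\},A\rhd B,A \Rightarrow \{X_i\}$, whose reading under the induction hypothesis gives $\bigwedge\Omega \land \bigwedge\{X_i\rhd Y_i\} \land (A\rhd B)\land A \to \bigvee\{X_i\}$; combining this with $\Box\bigwedge\Omega$ and $\Box(A\rhd B)$ (from persistence) via $\J{4}$-type reasoning inside a $\rhd$ should yield the conclusion.

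The main obstacle will be part~2, specifically managing the occurrence of the diagonal formula $A\rhd B$ and the persistent context $\Omega$ inside the left premise. Unlike the clean $(\rhd)$ rule, here the conclusion's own consequent $A\rhd B$ reappears as a hypothesis, so the derivation has a genuinely self-referential flavour and the argument must use $\PP$ to box $A\rhd B$ before discharging it; getting the Boolean and modal bookkeeping right so that the boxed hypotheses can be legitimately introduced and then eliminated is the delicate point, whereas part~1 and the propositional cases should be routine.
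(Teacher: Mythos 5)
Your overall plan coincides with the paper's: part 1 by induction on proofs, reducing to admissibility of the old modal rules plus a direct derivation of $\PP$, and part 2 by showing $(\rhd_\PP)$ admissible in $\IL^-(\PP)$. Part 1 is essentially right, with one instantiation detail to fix: in the derivation of $A \rhd B \Rightarrow (\neg(A\rhd B))\rhd\bot$ you must take $n=0$ (no principal formulas) and $\Omega = \{A\rhd B\}$. Your phrase ``a single principal formula'' is not just loose terminology: if $A \rhd B$ were a principal formula of the application, the rule would demand the side premise $B \Rightarrow \bot$, which is unprovable. The derivation works precisely because the copy of $A\rhd B$ supplied by $\Omega$ closes the single left premise, as you indeed observe.

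Part 2 contains a genuine gap: you never identify the mechanism that discharges the diagonal formula, and the one you name cannot work. You propose to obtain $\Box(A \rhd B)$ ``from persistence,'' i.e., from $\PP$; but $\PP$ only yields $A \rhd B \to \Box(A\rhd B)$, and $A \rhd B$ is not among the hypotheses of the translated conclusion $\bigwedge\Omega \land \bigwedge\{X_i \rhd Y_i\} \to A \rhd B$ --- it is the goal. The missing idea is a L\"ob argument via the axiom $\G{3}$, which your sketch never mentions. What the paper does is: first derive, from the translated left premise, $\bigwedge\Omega \to \Box\bigwedge\Omega$, the computation you cite, Fact~\ref{IL-fact}.1 (i.e., $\Box(A \to B) \to (B \rhd C \to A \rhd C)$), and the side premises, the implication $\bigwedge\Omega\land\bigwedge\{X_i\rhd Y_i\} \to \bigl(\Box(A\rhd B)\to A\rhd B\bigr)$; then necessitate it, use $\PP$ on the context formulas (not on $A\rhd B$) to strip the resulting boxes from $\Omega$ and $\{X_i\rhd Y_i\}$, and apply $\G{3}$ to $\Box\bigl(\Box(A\rhd B)\to A\rhd B\bigr)$ to get $\bigwedge\Omega\land\bigwedge\{X_i\rhd Y_i\}\to\Box(A\rhd B)$; finally modus ponens against the displayed implication gives $A \rhd B$. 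Without this L\"ob step your plan stalls exactly at the point you flag as delicate: there is no legitimate way to introduce the boxed hypothesis $\Box(A\rhd B)$ by $\PP$ alone. The use of $\G{3}$ is unavoidable here, since $(\rhd_\PP)$ subsumes the $\GL$ rule $(\Box)$, whose admissibility in the Hilbert system already requires L\"ob's axiom.
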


\begin{proof}
1. First, the following derivations show that rules $(\Box)$ and $(\rhd)$ in $\ILms$ are admissible in $\ILmPs$. 

\begin{description}
	\item [$(\Box)$] For $\Gamma = \{X_0, \ldots, X_{n-1}\}$, 
\begin{equation*}
\infer[(\rhd_\PP): \Omega = \varnothing]{\{(\neg X_i) \rhd \bot \mid i < n\} \Rightarrow (\neg A) \rhd \bot}
	{\infer=[(\neg\text{l})\text{ and }(\neg\text{r})\text{s}]{\{(\neg X_i) \rhd \bot \mid i < n\}, (\neg A) \rhd \bot, \neg A \Rightarrow \{\neg X_i \mid i < n\}}
		{\{(\neg X_i) \rhd \bot \mid i < n\}, \{X_i \mid i < n\}, (\neg A) \rhd \bot \Rightarrow A}
		&
		\overbrace{\bot \Rightarrow \bot}^{n}
	}.
\end{equation*}

	\item [$(\rhd)$] 
\begin{equation*}
\infer[(\rhd_\PP): \Omega = \varnothing]{\{X_i \rhd Y_i \mid i < n\} \Rightarrow A \rhd B}
	{\infer=[(\WL)\text{s}]{\{X_i \rhd Y_i \mid i < n\}, A \rhd B, A \Rightarrow \{X_i \mid i < n\}}
		{A\Rightarrow \{X_i \mid i < n\}}
		&
		\langle Y_i \Rightarrow B \rangle_{i < n}
	}.
\end{equation*}
\end{description}

Then, Proposition~\ref{IL^-equiv}.1 shows that the sequents corresponding to theorems of $\IL^-$ are provable in $\ILmPs$. 
Then, it suffices to show that the sequents corresponding to $\PP$ are provable in $\ILmPs$. 
\begin{align*}
\infer[(\rhd_\PP): n = 0\ \text{and}\ \Omega = \{A \rhd B\}]{A \rhd B \Rightarrow (\neg(A \rhd B)) \rhd \bot}
	{\infer[(\NL)]{A \rhd B, (\neg(A \rhd B)) \rhd \bot, \neg(A \rhd B) \Rightarrow}
		{\infer[(\WL)]{A \rhd B, (\neg(A \rhd B)) \rhd \bot \Rightarrow A \rhd B}
			{A \rhd B \Rightarrow A \rhd B}
		}
	}.
\end{align*}

2. It suffices to show that the rule $(\rhd_\PP)$ is admissible in $\IL^-(\PP)$. 
Assume that $\IL^-(\PP)$ proves
\begin{equation*}
\bigwedge \Omega \land \bigwedge \{ X_i \rhd Y_i \} \land (A \rhd B) \land A \to \bigvee \{ X_i \}
\end{equation*}
and $Y_j \to B$ for each $j < n$. 
Then, 
\begin{align}\label{seq3}
\IL^-(\PP) & \vdash \bigwedge \Omega \land \bigwedge \{ X_i \rhd Y_i \} \land (A \rhd B) \to \left(A \to \bigvee \{ X_i \} \right), \nonumber\\
& \vdash \bigwedge \Box \Omega \land \bigwedge \Box \{ X_i \rhd Y_i \} \land \Box (A \rhd B) \to \Box \left(A \to \bigvee \{ X_i \}\right), \nonumber\\
& \vdash \bigwedge \Omega \land \bigwedge \{ X_i \rhd Y_i \} \land \Box (A \rhd B) \to \Box \left(A \to \bigvee \{ X_i \}\right).
\end{align}
Also, as in the proof of Proposition~\ref{IL^-equiv}.2, we have $\IL^- \vdash \bigwedge \{ X_i \rhd Y_i \} \to \left( \bigvee \{ X_i \} \right) \rhd \bigvee \{ Y_i \}$. 
By Fact \ref{IL-fact}.1, 
\begin{equation*}
\IL^- \vdash \Box \left(A \to \bigvee \{ X_i \} \right) \to \left( \left( \bigvee \{ X_i \} \right) \rhd \bigvee \{ Y_i \} \to A \rhd \bigvee \{ Y_i \} \right). 
\end{equation*}
Thus, we obtain
\begin{equation*}
\IL^- \vdash \bigwedge \{ X_i \rhd Y_i \} \to \left( \Box \left(A \to \bigvee \{ X_i \} \right) \to A \rhd \bigvee \{ Y_i \} \right).
\end{equation*}
From this with (\ref{seq3}), 
\begin{equation*}
\IL^-(\PP) \vdash \bigwedge \Omega \land \bigwedge \{ X_i \rhd Y_i \} \land \Box (A \rhd B) \to A \rhd \bigvee \{ Y_i \}.
\end{equation*}
Since $\IL^-(\PP) \vdash \bigvee \{ Y_i \} \to B$, by the rule $\R{1}$, $\IL^-(\PP) \vdash  A \rhd \bigvee \{ Y_i \} \to A \rhd B$. 
Hence, we obtain
\begin{equation*}
\IL^-(\PP) \vdash \bigwedge \Omega \land \bigwedge \{ X_i \rhd Y_i \} \land \Box (A \rhd B) \to A \rhd B.
\end{equation*}
Finally,
\begin{align}\label{seq4}
\IL^-(\PP) & \vdash \bigwedge \Omega \land \bigwedge \{ X_i \rhd Y_i \} \to ( \Box (A \rhd B) \to A \rhd B), \\
& \vdash  \bigwedge \Box \Omega \land \bigwedge \Box \{ X_i \rhd Y_i \} \to \Box ( \Box (A \rhd B) \to A \rhd B), \nonumber \\
& \vdash \bigwedge \Omega \land \bigwedge \{ X_i \rhd Y_i \} \to \Box (A \rhd B), \nonumber \\
& \vdash \bigwedge \Omega \land \bigwedge \{ X_i \rhd Y_i \} \to A \rhd B. \tag{By (\ref{seq4})} \nonumber
\end{align}
\end{proof}

\subsection{Proof of the cut-elimination theorem}

In this subsection, we prove the cut-elimination theorem for $\ILmPs$. 
That is, if a sequent $\Gamma \Rightarrow \Delta$ is derivable in $\ILmPs$, then it is also derivable without using $(\mathrm{cut})$s.

We prove the cut-elimination theorem by double induction on degree and height. 
For an instance of $(\mathrm{cut})$ of the form
$$\infer[(\mathrm{cut})]{\Gamma, \Sigma \Rightarrow \Delta, \Pi}{\deduce{\Gamma \Rightarrow \Delta, A}{\pi} & \deduce{A, \Sigma \Rightarrow \Pi}{\sigma}}$$
where $\pi$ and $\sigma$ are cut-free subproofs, we define the degree and height of this application of the rule $(\mathrm{cut})$ as follows:
\begin{itemize}
\item The \textit{degree} of the $(\mathrm{cut})$ is the number of occurrences of connectives and operators in the cut-formula $A$.
\item The \textit{height} of the $(\mathrm{cut})$ is the sum of the maximum lengths of the subproofs $\pi$ and $\sigma$.
\end{itemize}

The usual cut-elimination procedure searches a topmost instance of the $(\mathrm{cut})$ rule and converts it into a cut-free proof of the same conclusion. 
The conversion method is given by primary induction on the degree and secondary induction on the height. 
If the cut-formula of such a topmost instance is one of the propositional variables, $\bot$, $\neg A$, $A \lor B$, $A \land B$, or $A \to B$, then it is verified in the standard way. 
Thus, we focus only on the rule $(\mathrm{cut})$ where the cut-formula is of the form $A \rhd B$. 
Even in this case, if at least one of the last applications of $\pi$ and $\sigma$ is a logical rule, then the secondary induction hypothesis goes well. 
Therefore, the only essential case is that the last applications of $\pi$ and $\sigma$ are both $(\rhd_\PP)$.
Let us consider the following derivation:
\begin{equation*}
\infer[(\mathrm{cut})]
{\Omega_1, \Omega_2, \{ X_i \rhd Y_i \}, \{ Z_j \rhd W_j \} \Rightarrow C \rhd D}
{\deduce{\Omega_1, \{ X_i \rhd Y_i \} \Rightarrow A \rhd B}{\pi}
&
\deduce{\Omega_2, \{ Z_j \rhd W_j \}, A \rhd B \Rightarrow C \rhd D}{\sigma}
},
\end{equation*}
where $\pi$ and $\sigma$ are cut-free subproofs whose last applications are both $(\rhd_\PP)$. 
If the cut-formula $A \rhd B$ is non-principal in the last application of $\sigma$, then the following argument shows that a cut-free proof of the lower sequent is easily obtained from the induction hypothesis. 
Suppose that $\sigma$ is of the form:
\[ 
\infer[(\rhd_\PP)]{A \rhd B, \Omega_2, \{ Z_j \rhd W_j \} \Rightarrow C \rhd D}
	{\deduce{A \rhd B, \Omega_2, \{ Z_j \rhd W_j \}, C \rhd D, C \Rightarrow \{ Z_j \}}{\sigma_{\mathrm{L}}}
	&
	\langle \deduce{W_j \Rightarrow D}{\sigma_j} \rangle
	}.
\]

From $\pi$ and $\sigma_{\mathrm{L}}$, applying $(\mathrm{cut})$ with the cut-formula $A \rhd B$, we obtain a proof of
\[
\left(
\begin{array}{ccc}
\Omega_1, & \{ X_i \rhd Y_i \}, & C \rhd D, \\
\Omega_2, & \{ Z_j \rhd W_j \}, & C
\end{array}
\Rightarrow \{ Z_j \}
\right).
\]
By the induction hypothesis, this application of $(\mathrm{cut})$ is eliminable because its height is smaller than that of the original $(\mathrm{cut})$. 
Therefore, we obtain a cut-free proof of the above sequent.  
The required cut-free proof is obtained as follows: 
\[
\infer[(\rhd_\PP)]{\Omega_1, \Omega_2, \{ X_i \rhd Y_i \}, \{ Z_j \rhd W_j \} \Rightarrow C \rhd D}
	{\begin{array}{ccc}
	\Omega_1, & \{ X_i \rhd Y_i \}, & C \rhd D, \\
	\Omega_2, & \{ Z_j \rhd W_j \}, & C
	\end{array}
	\Rightarrow 
	\{ Z_j \}
	&
	\deduce{\langle W_j \Rightarrow D \rangle}{\sigma_j}
	}.
\]
Notice that the formulas in $\{ X_i \rhd Y_i \}$ are non-principal in this application of $(\rhd_\PP)$.

So, it suffices to prove the following main proposition.
\begin{prop}\label{mainP}
Suppose that the following proof is given:
\begin{equation*}
\infer[(\mathrm{cut})]
{\Omega_1, \Omega_2, \{ X_i \rhd Y_i \}, \{ Z_j \rhd W_j \} \Rightarrow C \rhd D}
{\deduce{\Omega_1, \{ X_i \rhd Y_i \} \Rightarrow A \rhd B}{\pi}
&
\deduce{\Omega_2, \{ Z_j \rhd W_j \}, A \rhd B \Rightarrow C \rhd D}{\sigma}
},
\end{equation*}
where $\pi$ and $\sigma$ are cut-free subproofs of the following forms respectively:
\begin{itemize}
\item $\pi: \infer[(\rhd_\PP)]
	{\Omega_1, \{ X_i \rhd Y_i \} \Rightarrow A \rhd B}
	{\deduce{\Omega_1, \{ X_i \rhd Y_i \}, A \rhd B, A \Rightarrow \{ X_i \}}{\pi_{\mathrm{L}}}
	&
	\langle \deduce{Y_i \Rightarrow B}{\pi_i} \rangle
	}$,
\item 
$\sigma: \infer[(\rhd_\PP)]
	{\Omega_2, \{ Z_j \rhd W_j \}, A \rhd B \Rightarrow C \rhd D}
	{\deduce{\Omega_2, \{ Z_j \rhd W_j \}, A \rhd B, C \rhd D, C \Rightarrow \{ Z_j \}, A}{\sigma_{\mathrm{L}}}
	&
	\langle \deduce{W_j \Rightarrow D}{\sigma_j} \rangle
	&
	\deduce{B \Rightarrow D}{\sigma_B}}.$
\end{itemize}
Then, there exists a cut-free proof of $(\Omega_1, \Omega_2, \{ X_i \rhd Y_i \}, \{ Z_j \rhd W_j \} \Rightarrow C \rhd D)$.
\end{prop}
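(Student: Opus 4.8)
The plan is to collapse the given cut into a single application of $(\rhd_\PP)$ whose conclusion is already the desired sequent $\Omega_1, \Omega_2, \{X_i \rhd Y_i\}, \{Z_j \rhd W_j\} \Rightarrow C \rhd D$, and to assemble its premises out of $\pi$, $\sigma$ and their immediate subproofs $\pi_{\mathrm{L}}, \pi_i, \sigma_{\mathrm{L}}, \sigma_j, \sigma_B$ using only cuts that are either of strictly smaller degree (on the constituents $A$ or $B$) or of the same degree (on $A \rhd B$) but of strictly smaller height. Writing $\Gamma_1 = \Omega_1, \{X_i \rhd Y_i\}$ and $\Gamma_2 = \Omega_2, \{Z_j \rhd W_j\}$, I read off the final inference as having diagonal formula $C \rhd D$, principal formulas $\{X_i \rhd Y_i\} \cup \{Z_j \rhd W_j\}$, and persistence context $\Omega_1 \cup \Omega_2$. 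All of the latter are of the form $(\cdot)\rhd(\cdot)$, so this is a legitimate instance of $(\rhd_\PP)$, and its conclusion is exactly the sequent we want.

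With this target fixed, the right premises are cheap. For each $j$ the premise $W_j \Rightarrow D$ is literally $\sigma_j$, and for each $i$ the premise $Y_i \Rightarrow D$ is obtained by cutting $\pi_i$ (that is, $Y_i \Rightarrow B$) against $\sigma_B$ (that is, $B \Rightarrow D$) on $B$; since the degree of $B$ is strictly smaller than that of $A \rhd B$, this cut is removable by the primary induction hypothesis. The real work is the left premise $\Gamma_1, \Gamma_2, C \rhd D, C \Rightarrow \{X_i\}, \{Z_j\}$. I would first cut the whole left subproof $\pi$ (concluding $\Gamma_1 \Rightarrow A \rhd B$) against $\sigma_{\mathrm{L}}$ on the cut formula $A \rhd B$; because $\sigma_{\mathrm{L}}$ is a premise of the last inference of $\sigma$, its height is strictly below that of $\sigma$, so the total height of this new cut is strictly below that of the original, and the secondary induction hypothesis removes it. This yields a cut-free proof of $\Gamma_1, \Gamma_2, C \rhd D, C \Rightarrow \{Z_j\}, A$ in which $A \rhd B$ no longer occurs. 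I then cut this against $\pi_{\mathrm{L}}$ on $A$ (smaller degree, hence removable) to replace the stray succedent formula $A$ by $\{X_i\}$, which lands on the left premise except for the reappearance of $A \rhd B$ in the antecedent, carried in by $\pi_{\mathrm{L}}$, to be cleared by one further cut against $\pi$.

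The heart of the matter, and the step I expect to be the main obstacle, is exactly the bookkeeping for the residual cuts whose cut formula is again $A \rhd B$. These have the same degree as the original cut, so the primary induction is useless and everything hinges on forcing the height to strictly drop. This is feasible precisely because of the persistence rule: since $(\rhd_\PP)$ retains the diagonal formula and the entire $\rhd$-context in its left premise, the cut formula $A \rhd B$ always reappears in the left position of some premise ($\sigma_{\mathrm{L}}$ or $\pi_{\mathrm{L}}$) whose derivation is strictly shorter than $\sigma$ or $\pi$, so that cutting the fixed derivation $\pi$ against that shorter premise stays below the original height. The delicate point is the $\pi_{\mathrm{L}}$-branch: $\pi_{\mathrm{L}}$ is a premise of $\pi$ rather than of $\sigma$, so clearing the $A \rhd B$ it introduces requires organizing the double induction so that reductions against premises of \emph{either} subderivation strictly decrease the measure. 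This is the analogue for $\ILmPs$ of the well-known subtlety in the cut-elimination argument for $\GL$, and it is what the double induction on degree and height — always reducing to cuts against strictly shorter premises — is designed to overcome. Once these same-degree cuts are discharged and the stray $A \rhd B$ is eliminated, the three assembled premises feed the final $(\rhd_\PP)$ and produce the required cut-free proof.
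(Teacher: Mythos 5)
Your overall scaffolding matches the paper's: the final application of $(\rhd_\PP)$ with diagonal formula $C \rhd D$ and principal formulas $\{X_i \rhd Y_i\} \cup \{Z_j \rhd W_j\}$, the right premises $W_j \Rightarrow D$ taken directly from $\sigma_j$ and $Y_i \Rightarrow D$ obtained by a smaller-degree cut of $\pi_i$ against $\sigma_B$ on $B$, and the first cut of $\pi$ against $\sigma_{\mathrm{L}}$ on $A \rhd B$ (legitimately removable by the secondary induction hypothesis, since $\sigma_{\mathrm{L}}$ is a subproof of $\sigma$) are all exactly as in the paper. But there is a genuine gap at the step you yourself flag as delicate, and your assertion that the double induction ``is designed to overcome'' it does not close it. After cutting the resulting cut-free proof of $(\Omega_1, \Omega_2, \{X_i \rhd Y_i\}, \{Z_j \rhd W_j\}, C \rhd D, C \Rightarrow \{Z_j\}, A)$ against $\pi_{\mathrm{L}}$ on $A$, you obtain a cut-free proof $\rho$ of $(\Omega_1, \Omega_2, \{X_i \rhd Y_i\}, \{Z_j \rhd W_j\}, C \rhd D, C, A \rhd B \Rightarrow \{Z_j\}, \{X_i\})$, and you propose to clear the reintroduced $A \rhd B$ ``by one further cut against $\pi$.'' That cut has cut-formula $A \rhd B$, the same degree as the original, so only the height measure could help; but its height is the height of $\pi$ plus the height of $\rho$, and $\rho$ is not a subproof of $\pi$ or $\sigma$ --- it is the output of two prior rounds of cut-elimination, whose height is completely uncontrolled (elimination can blow heights up enormously). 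Hence neither the primary nor the secondary induction hypothesis applies to this cut, and the measure simply does not decrease. This is not bookkeeping one can reorganize away; it is precisely the classical obstruction in $\GL$-style calculi caused by the rule retaining its own diagonal formula in the premise.

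The paper closes this gap with a separate lemma (Lemma~\ref{LemA}, a modification of Borga's argument), which your proposal is implicitly assuming. Instead of cutting against the root of $\pi_{\mathrm{L}}$, the paper transforms $\pi_{\mathrm{L}}$ from within: it singles out the $(A \rhd B)$-explicit sequents, collects the set $\overline{\Theta}$ of diagonal formulas of those inner $(\rhd_\PP)$-applications in which $A \rhd B$ is principal, and proves, by its own double induction (downward on $|\Theta|$, top-down on the position of the sequent), that the root sequent can be re-derived \emph{without} $A \rhd B$ in the antecedent --- that is, $(\Omega_1, \{X_i \rhd Y_i\}, A \Rightarrow \{X_i\})$ has a proof all of whose cuts have cut-formula $A$ or $B$. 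Note that this cannot be achieved by a single cut at the root: inside $\pi_{\mathrm{L}}$ the formula $A \rhd B$ may itself be principal in applications of $(\rhd_\PP)$ (paired there with premises of the form $B \Rightarrow F$), and each such use must be unfolded. With that lemma in hand, the assembled proof of the goal sequent contains only cuts on $A$ and $B$, and the primary (degree) induction finishes; no same-degree cut against $\pi$ is ever performed. Your argument needs this lemma, or something equivalent to it, to be complete.
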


The remainder of this subsection is devoted to proving Proposition \ref{mainP}. 
For this purpose, we prove a lemma along the lines of a modification of Borga's argument \cite{Bor83}.
We assume that we already have the fixed cut-free proofs $\pi$, $\sigma$, $\pi_{\mathrm{L}}$, $\pi_i (i<n)$, $\sigma_{\mathrm{L}}$, $\sigma_j (j < m)$, and $\sigma_B$ as indicated in the statement of Proposition~\ref{mainP}. 
We say that a proof is an \textit{$(A, B)^\star$-proof} if every cut-formula of the rule $(\mathrm{cut})$ occurring in the proof is either $A$ or $B$.
For the subproof $\pi_{\mathrm{L}}$, we inductively define \textit{$(A \rhd B)$-explicit} sequents of $\pi_{\mathrm{L}}$ as follows:
\begin{enumerate}
\item The root sequent $(\Omega_1, \{ X_i \rhd Y_i \}, A \rhd B, A \Rightarrow \{X_i\})$ is $(A \rhd B)$-explicit;
\item If $\dfrac{S_1 \quad \cdots \quad S_n}{S_0}$ is an application of some rule in $\pi_{\mathrm{L}}$, $S_0$ is $(A \rhd B)$-explicit, and $A \rhd B$ occurs in the antecedent of the sequent $S_i$, then $S_i$ is also $(A \rhd B)$-explicit.
\end{enumerate}

Clearly, the set of all $(A\rhd B)$-explicit sequents of $\pi_{\mathrm{L}}$ form a subtree of $\pi_{\mathrm{L}}$. 


Let 
	\[ \overline{\Theta}: = \left\{ E \rhd F\ \middle|\ 
		\begin{array}{l}
		\text{In the subtree consisting }(A \rhd B)\text{-explicit sequents in }\pi_\mathrm{L}, \\
		E \rhd F \text{ is the diagonal formula of an application of }(\rhd_\PP)\\
		\text{ in which }A \rhd B\text{ is principal}
		\end{array} \right\}.
	\] 
\begin{lem}\label{LemA}
For every $\Theta \subseteq \overline{\Theta}$ and $(A \rhd B)$-explicit sequent $S = (A \rhd B, \Gamma \Rightarrow \Delta)$ of $\pi_{\mathrm{L}}$, there exists an $(A, B)^\star$-proof of $(\Omega_1, \{ X_i \rhd Y_i \}, \Theta, \Gamma \Rightarrow \Delta)$.
\end{lem}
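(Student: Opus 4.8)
The plan is to prove the lemma by a single well-founded induction on the lexicographic pair $(|\overline{\Theta} \setminus \Theta|,\, h)$, where $h$ is the height of the subproof of $\pi_{\mathrm{L}}$ whose conclusion is $S$. Throughout, every inference I introduce is either a weakening, a reapplication of $(\rhd_\PP)$, or a $(\mathrm{cut})$ whose cut-formula is $A$ or $B$; since $\pi$, the $\pi_i$, the $\sigma_j$, $\sigma_B$ and $\pi_{\mathrm{L}}$ itself are all cut-free, the result is automatically an $(A,B)^\star$-proof. The reason for carrying the parameter $\Theta \subseteq \overline{\Theta}$ is precisely that, in the critical case, enlarging $\Theta$ discharges an otherwise-circular subgoal while strictly decreasing the first coordinate of the measure.

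For the routine cases the first coordinate is unchanged and the height drops. If $S$ is an initial sequent, then since $A \rhd B$ lies in its antecedent it must be $A \rhd B \Rightarrow A \rhd B$, and the required $(\Omega_1, \{X_i \rhd Y_i\}, \Theta \Rightarrow A \rhd B)$ is obtained from $\pi$ by $(\WL)$s. If $A \rhd B$ is introduced by $(\WL)$, its premise is not explicit and the goal follows from that premise by weakening. If the last rule is propositional, or is $(\rhd_\PP)$ with $A \rhd B$ among the side formulas $\Omega$ rather than principal, then $A \rhd B$ is a side formula copied into each explicit premise; I apply the induction hypothesis to those premises with the same $\Theta$ and reapply the rule, enlarging the antecedent by the $\rhd$-formulas $\Omega_1, \{X_i \rhd Y_i\}, \Theta$, which no rule forbids.

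The essential case is when $S$ is the conclusion of an application of $(\rhd_\PP)$ in which $A \rhd B$ is principal, with diagonal formula $E \rhd F$ (so $E \rhd F \in \overline{\Theta}$), left premise $S_{\mathrm{L}}$, and right premises $\langle Q_l \Rightarrow F\rangle_l$, the one attached to $A \rhd B = P_{k_0} \rhd Q_{k_0}$ being $B \Rightarrow F$ since $Q_{k_0} = B$. If $E \rhd F \in \Theta$ the goal already carries $E \rhd F$ on both sides and is axiomatic. Otherwise I set $\Theta^{+} := \Theta \cup \{E \rhd F\}$, so that $|\overline{\Theta} \setminus \Theta^{+}| < |\overline{\Theta} \setminus \Theta|$. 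Applying the induction hypothesis to $S_{\mathrm{L}}$ with $\Theta^{+}$ yields a proof whose succedent still carries the stray formula $A = P_{k_0}$. To eliminate it I invoke the induction hypothesis a \emph{second} time, at the \emph{root} of $\pi_{\mathrm{L}}$ with the same $\Theta^{+}$: because the first coordinate has strictly dropped, this call is legitimate even though the root lies below $S$ in the tree, and it produces an $(A,B)^\star$-proof of $\Omega_1, \{X_i \rhd Y_i\}, \Theta, E \rhd F, A \Rightarrow \{X_i\}$. A $(\mathrm{cut})$ on $A$ between the two proofs replaces the stray $A$ in the succedent by the set $\{X_i\}$. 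I then obtain the right premises $Y_i \Rightarrow F$ by cutting each $\pi_i$ ($Y_i \Rightarrow B$) against $B \Rightarrow F$ on $B$, and reapply $(\rhd_\PP)$ with $\{X_i \rhd Y_i\}$, together with the remaining $P_l \rhd Q_l$, as principal formulas and $E \rhd F$ as diagonal; its conclusion is exactly $\Omega_1, \{X_i \rhd Y_i\}, \Theta, \Gamma \Rightarrow E \rhd F$.

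The main obstacle is this discharge of the stray $A$: the only data linking $A$ to $\{X_i\}$ reside in $\pi_{\mathrm{L}}$, which carries its own copy of the diagonal $A \rhd B$, so a plain induction on height loops back on the root. The device that breaks the loop is to fold the current diagonal $E \rhd F$ into $\Theta$ before recursing; since $E \rhd F \in \overline{\Theta}$ this makes $|\overline{\Theta} \setminus \Theta|$ genuinely decrease, and since $E \rhd F$ is in any case reinserted into the reconstructed left premise by $(\rhd_\PP)$ it costs nothing, while the complementary possibility $E \rhd F \in \Theta$ is absorbed into the axiomatic case. Checking that all side formulas remain of the form $\rhd$ (so each reapplication of $(\rhd_\PP)$ is legal) and that the antecedent- and succedent-sets match up after the cuts is then routine bookkeeping.
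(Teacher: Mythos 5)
Your proposal is correct and takes essentially the same approach as the paper: your single lexicographic induction on $(|\overline{\Theta} \setminus \Theta|,\, h)$ is exactly a repackaging of the paper's primary downward induction on $|\Theta|$ together with its secondary top-down induction on the position of $S$, and your critical case (folding the diagonal $E \rhd F$ into $\Theta$, recursing at the root of $\pi_{\mathrm{L}}$ with the enlarged set, cutting on $A$ and on $B$, then reapplying $(\rhd_\PP)$) is precisely the paper's construction. The only cosmetic deviation is that you invoke the hypothesis at $S_{\mathrm{L}}$ with $\Theta^{+}$ where the paper uses $\Theta$; since $E \rhd F$ already occurs in the antecedent of $S_{\mathrm{L}}$, both calls yield the identical sequent.
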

\begin{proof}
The lemma is proved by primary downward induction on the cardinality $|\Theta|$ of the set $\Theta \subseteq \overline{\Theta}$ and secondary top-down induction on the position of $S$ in the subtree of $(A\rhd B)$-explicit sequents of $\pi_\mathrm{L}$.

Assume $\Theta = \overline{\Theta}$. We show that the statement holds for every $(A \rhd B)$-explicit sequent $S$ of $\pi_L$ by top-down induction on the position of $S$.

\vspace{0.1in}
\textbf{Base case:} The sequent $S$ in question must be (1): an initial sequent $(A \rhd B \Rightarrow A \rhd B)$ or (2): an lower sequent of $\infer[(\WL)]{A \rhd B, \Gamma \Rightarrow \Delta}{\Gamma \Rightarrow \Delta}$ where $\Gamma$ does not contain $A \rhd B$. For Case (1), we get a cut-free proof of $(\Omega_1, \{ X_i \rhd Y_i \}, \overline{\Theta} \Rightarrow A \rhd B)$ by applying (wl) to the root of $\pi$ several times.
For Case (2), a cut-free proof of $(\Omega_1, \{ X_i \rhd Y_i \}, \overline{\Theta}, \Gamma \Rightarrow \Delta)$ is easily obtained by applying (wl) several times.

\vspace{0.1in}
\textbf{Inductive step:} (i) Suppose that $S$ is a lower sequent of an application of $(\rhd_\PP)$. We distinguish the following two cases:
	\begin{enumerate}
	\item If $A \rhd B$ is non-principal, then the application is of the form:
		\[
		\infer[(\rhd_\PP)]{\Omega', A \rhd B, \{ X_k' \rhd Y_k' \} \Rightarrow C' \rhd D'}
		{\Omega', A \rhd B, \{ X_k' \rhd Y_k' \}, C' \rhd D', C' \Rightarrow \{ X_k' \}
		&
		\langle Y_k' \Rightarrow D' \rangle
		}.
		\]
	By the secondary induction hypothesis, there exists an $(A,B)^\star$-proof of
	\[ \left(
		\begin{array}{cccc}
		\Omega_1, &  \{ X_i \rhd Y_i \}, &  \overline{\Theta}, & C' \rhd D', \\
		\Omega', & \{ X_k' \rhd Y_k' \}, & &  C'
		\end{array}
	\Rightarrow \{ X_k' \}\right).
	\]
	For this and $\langle Y_k' \Rightarrow D' \rangle$, we apply ($\rhd_\PP$). Notice that each sequent in
	$\langle Y_k' \Rightarrow D' \rangle$ has a cut-free proof because it is in $\pi_{\mathrm{L}}$. 
	Then, we obtain an $(A, B)^\star$-proof of
	$( \Omega_1, \Omega', \{ X_i \rhd Y_i \}, \{ X_k' \rhd Y_k' \}, \overline{\Theta} \Rightarrow C' \rhd D')$.
	Recall that $\overline{\Theta}$ contains only $\rhd$-formulas,
	and here the principal formulas in this application of $(\rhd_\PP)$ are $\{ X_k' \rhd Y_k' \}$. 

	\item If $A \rhd B$ is principal, then the application is of the form:
	\[ \infer[(\rhd_\PP)]{\Omega', \{ X_k' \rhd Y_k' \}, A \rhd B \Rightarrow E \rhd F}
	{\Omega', \{ X_k' \rhd Y_k' \}, A \rhd B, E \rhd F, E \Rightarrow \{ X_k' \}, A
	&
	\langle Y_k' \Rightarrow F \rangle
	&
	B \Rightarrow F
	}.
	\]
	Since $E \rhd F \in \overline{\Theta}$, we obtain a cut-free proof as follows:
	\[ \infer=[(\WL)\text{s}]{\Omega_1, \{ X_i \rhd Y_i \}, \overline{\Theta}, \Omega', \{X'_k \rhd Y'_k \} \Rightarrow E \rhd F}
	{E \rhd F \Rightarrow E \rhd F}.
	\]
	\end{enumerate}
(ii) The case that $S$ is a lower sequent of a logical rule is clear by the secondary induction hypothesis.
For instance, if $\dfrac{S_1 \quad S_2}{A \rhd B, \Gamma \Rightarrow \Delta}$ is some application of the logical rule $(\ast)$, then both $S_1$ and $S_2$ are also $(A \rhd B)$-explicit. 
Then, they are of the forms $(A \rhd B, \Gamma_1 \Rightarrow \Delta_1)$ and $(A \rhd B, \Gamma_2 \Rightarrow \Delta_2)$, respectively. 
By the secondary induction hypothesis, there exist $(A, B)^\star$-proofs of $(\Omega_1, \{X_i \rhd Y_i\}, \overline{\Theta}, \Gamma_1 \Rightarrow \Delta_1)$ and $(\Omega_1, \{X_i \rhd Y_i\}, \overline{\Theta}, \Gamma_2 \Rightarrow \Delta_2)$. 
Then, by the rule $(\ast)$, we obtain a required $(A, B)^\star$-proof of $(\Omega_1, \{X_i \rhd Y_i\}, \overline{\Theta}, \Gamma \Rightarrow \Delta)$. 

Now assume the lemma holds for subsets of $\overline{\Theta}$ with the cardinality $k+1$. Let $\Theta \subseteq \overline{\Theta}$ be such that $|\Theta| = k$. Again we show that the statement holds for every $(A\rhd B)$-explicit sequent $S$ of $\pi_\mathrm{L}$ by top-down induction on the position of $S$.

\vspace{0.1in}
\textbf{Base case:} Similar to the case $\Theta = \overline{\Theta}$.

\vspace{0.1in}
\textbf{Inductive step:} In the case that $S$ is a lower sequent of a logical rule, the statement can be proved similarly as before. It suffices to show the case that $S$ is a lower sequent of an application of $(\rhd_\PP)$. We distinguish the following two cases:
	\begin{enumerate}
	\item The formula $A \rhd B$ is non-principal: Similar to the case $\Theta = \overline{\Theta}$.
	\item If $A \rhd B$ is principal, then the application is of the form:
	\[ \infer[(\rhd_\PP)]{\Omega', \{ X_k' \rhd Y_k' \}, A \rhd B \Rightarrow E \rhd F}
	{\Omega', \{ X_k' \rhd Y_k' \}, A \rhd B, E \rhd F, E \Rightarrow \{ X_k' \}, A
	&
	\langle Y_k' \Rightarrow F \rangle
	&
	B \Rightarrow F
	}.
	\]
	If the diagonal formula $E \rhd F$ is contained in $\Theta$, then the statement is proved similarly as before by applying several ($\WL$)s.
	Suppose $E \rhd F \not\in \Theta$. Since $|\Theta \cup \{ E \rhd F \}| = k + 1$,
	there exists an $(A, B)^\star$-proof $\pi''$ of
	\[
	(\Omega_1, \{ X_i \rhd Y_i \}, \Theta, E \rhd F, A \Rightarrow \{ X_i \})
	\]
	by the primary induction hypothesis. (Here, we apply the hypothesis to the root of $\pi_\mathrm{L}$.)
	On the other hand, by the secondary induction hypothesis, we get an $(A, B)^\star$-proof of
	\[ ( \Omega_1,  \{ X_i \rhd Y_i \}, \Theta, \Omega', \{ X_k' \rhd Y_k' \}, E \rhd F, E
	\Rightarrow \{ X_k' \}, A ).
	\]
	Together with $\pi''$, we obtain an $(A, B)^\star$-proof of
	\[ (\Omega_1, \{ X_i \rhd Y_i \}, \Omega', \{ X_k' \rhd Y_k' \}, \Theta, E \rhd F, E \Rightarrow \{ X_i \}, \{ X_k' \} )\]
	by applying $(\mathrm{cut})$ with the cut-formula $A$. 
	Notice that each of $(Y_i \Rightarrow B)$, $(Y_k' \Rightarrow F)$, and $(B \Rightarrow F)$ has a cut-free proof. 
	Then, $(Y_i \Rightarrow F)$ has an $(A, B)^\star$-proof by applying the rule $(\mathrm{cut})$ with the cut-formula $B$. 
	Finally, we apply $(\rhd_\PP)$ as follows:
	\[ \infer[(\rhd_\PP)]{
		\begin{array}{cc}
		\Omega_1, & \{ X_i \rhd Y_i \}, \\
		\Omega', & \{ X_k' \rhd Y_k' \},
		\end{array}
	\Theta \Rightarrow E \rhd F}
	{
		\begin{array}{cc}
		\Omega_1, & \{ X_i \rhd Y_i \},  \\
		\Omega', & \{ X_k' \rhd Y_k' \},
		\end{array}
	\Theta, E \rhd F, E \Rightarrow 
		\begin{array}{c}
		\{ X_i \}, \\
		\{ X_k' \}
		\end{array}
	&
	\langle Y_i \Rightarrow F \rangle
	&
	\langle Y_k' \Rightarrow F \rangle
	}.
	\]
	Therefore, we have an $(A, B)^\star$-proof of
	$(\Omega_1, \{X_i \rhd Y_i\}, \Omega', \{X_k' \rhd Y_k'\}, \Theta \Rightarrow E \rhd F)$. \qedhere
	\end{enumerate}
\end{proof}
Applying Lemma~\ref{LemA} to $\Theta = \varnothing$ and the root of $\pi_L$, we get an $(A, B)^\star$-proof $\pi'$ of $(\Omega_1, \{ X_i \rhd Y_i \}, A \Rightarrow \{ X_i \})$.

\begin{proof}[Proof of Proposition~\ref{mainP}.]
Applying the rule $(\mathrm{cut})$ for $\pi$ and $\sigma_{\mathrm{L}}$ with the cut-formula $A \rhd B$, we obtain a proof of
\[
\left(
\begin{array}{ccc}
\Omega_1, & \{ X_i \rhd Y_i \}, & C \rhd D, \\
\Omega_2, & \{ Z_j \rhd W_j \}, & C
\end{array}
\Rightarrow \{ Z_j \}, A
\right).
\]
This application of $(\mathrm{cut})$ is eliminable by the induction hypothesis on the height. 
So we have a cut-free proof of the above sequent. 
From this and $\pi'$, we apply $(\mathrm{cut})$ with the cut-formula $A$. 
Then, we get an $(A, B)^\star$-proof of 
\begin{equation*}
\left(
\begin{array}{ccc}
\Omega_1, & \{ X_i \rhd Y_i \}, & C \rhd D, \\
\Omega_2, & \{ Z_j \rhd W_j \}, & C
\end{array}
\Rightarrow 
\begin{array}{c}
\{ X_i \}, \\
\{ Z_j \}
\end{array}
\right).
\end{equation*}
Apply $(\rhd_\PP)$ as follows:
\begin{equation*}
\infer[(\rhd_\PP)]{
\begin{array}{cc}
\Omega_1, & \{ X_i \rhd Y_i \}, \\
\Omega_2, & \{ Z_j \rhd W_j \}
\end{array}
\Rightarrow C \rhd D
}
{\left(
\begin{array}{ccc}
\Omega_1, & \{ X_i \rhd Y_i \}, & C \rhd D, \\
\Omega_2, & \{ Z_j \rhd W_j \}, & C
\end{array}
\Rightarrow 
\begin{array}{c}
\{ X_i \}, \\
\{ Z_j \}
\end{array}
\right)
&
\deduce{\langle Y_i \Rightarrow D \rangle}{\pi_i'}
&
\deduce{\langle W_j \Rightarrow D \rangle}{\sigma_j}
}.
\end{equation*}
Here $\pi_i'$ is obtained from $\pi_i$ and $\sigma_B$ by applying $(\mathrm{cut})$ with the cut-formula $B$. 
This is an $(A, B)^\star$-proof, and therefore the sequent $(\Omega_1, \Omega_2, \{ X_i \rhd Y_i \}, \{ Z_j \rhd W_j \} \Rightarrow C \rhd D)$ also has an $(A, B)^\star$-proof. 
The degree of every $(\mathrm{cut})$ in this proof is smaller than that of the original $(\mathrm{cut})$. 
Then, by the induction hypothesis, we obtain a cut-free proof of $(\Omega_1, \Omega_2, \{ X_i \rhd Y_i \}, \{ Z_j \rhd W_j \} \Rightarrow C \rhd D)$.
\end{proof}

\begin{thm}[The cut-elimination theorem for $\ILmPs$]\label{CE}
For any sequent $S$, if $S$ is provable in $\ILmPs$, then there exists a cut-free proof of $S$ in $\ILmPs$.
\end{thm}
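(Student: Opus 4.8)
The plan is to prove the cut-elimination theorem by the standard double induction on the degree and height of a topmost application of $(\mathrm{cut})$, exactly as set up in the paragraphs preceding Proposition~\ref{mainP}. First I would fix a proof in $\ILmPs$ and, as usual, it suffices to eliminate a single topmost instance of $(\mathrm{cut})$ whose two premises already have cut-free proofs; iterating this reduction from the top downward removes all cuts. The primary induction is on the \emph{degree} of the cut-formula (the number of connectives and operators in it), and the secondary induction is on the \emph{height} (the sum of the maximal lengths of the two immediate cut-free subproofs).

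The bulk of the work is a case analysis on the cut-formula and on the last rules applied in the two premises $\pi$ and $\sigma$. When the cut-formula is a propositional variable, $\bot$, or a Boolean compound $\neg A$, $A \land B$, $A \lor B$, or $A \to B$, the reduction is the standard Gentzen-style argument, handled either by the secondary induction on height (when the cut-formula is non-principal in one premise) or by replacing the cut with cuts on the immediate subformulas (when it is principal in both, lowering the degree). As noted before Proposition~\ref{mainP}, the only genuinely modal case is when the cut-formula has the form $A \rhd B$ and both $\pi$ and $\sigma$ end with $(\rhd_\PP)$; and even there, if $A \rhd B$ is non-principal in the last application of $\sigma$, the displayed derivation shows the cut is pushed upward and removed by the height induction hypothesis, after which $(\rhd_\PP)$ is reapplied.

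Thus everything reduces to the principal-principal modal case, which is precisely the content of Proposition~\ref{mainP}. I would invoke that proposition directly: its proof supplies a cut-free derivation of the conclusion by first cutting $\pi$ against $\sigma_{\mathrm{L}}$ on $A \rhd B$ (eliminable by the height hypothesis), then cutting against the $(A,B)^\star$-proof $\pi'$ extracted from Lemma~\ref{LemA} applied to $\Theta = \varnothing$, and finally reassembling via $(\rhd_\PP)$ with the side premises $\langle Y_i \Rightarrow D\rangle$ and $\langle W_j \Rightarrow D\rangle$. The resulting $(A,B)^\star$-proof contains only cuts on $A$ and on $B$, each of strictly smaller degree than the original cut on $A \rhd B$, so the primary (degree) induction hypothesis clears them and yields a cut-free proof of the conclusion.

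The main obstacle is concentrated entirely in the principal modal case, and it is already absorbed into Lemma~\ref{LemA} and Proposition~\ref{mainP}; given those, the theorem is a routine orchestration of the inductions. The delicate point one must check when assembling the argument is that the two interacting inductions are correctly nested: the cuts on $A$ and $B$ produced inside Proposition~\ref{mainP} are legitimately smaller in degree (so the primary hypothesis applies), while the auxiliary cut on $A \rhd B$ between $\pi$ and $\sigma_{\mathrm{L}}$ is smaller in height (so the secondary hypothesis applies). Since Lemma~\ref{LemA} itself is proved by a further double induction on $|\Theta|$ and on the position within the subtree of $(A \rhd B)$-explicit sequents, I would take care that the cuts introduced during its construction are also either of smaller degree or accounted for by the $(A,B)^\star$-proof bookkeeping, so that no circularity arises. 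With these bookkeeping checks in place, the statement of Theorem~\ref{CE} follows immediately.
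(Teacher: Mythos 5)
Your proposal is correct and follows essentially the same route as the paper: the paper's proof of Theorem~\ref{CE} is exactly the double induction on degree and height set up before Proposition~\ref{mainP}, with the propositional and non-principal cases handled in the standard way and the principal-principal $(\rhd_\PP)$ case delegated to Proposition~\ref{mainP} (via Lemma~\ref{LemA}), whose internal cuts on $A$ and $B$ are cleared by the degree hypothesis. Your bookkeeping remarks about the nesting of the two inductions match the paper's argument, so nothing further is needed.
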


\subsection{Craig interpolation property}

In this section, we show that Maehara's method of obtaining Craig interpolants of sequents having cut-free proofs can be applied to our systems $\ILms$ and $\ILmPs$. 
Therefore, we conclude that $\IL^-(\PP)$ enjoys CIP. 
Moreover, it follows that the cut-elimination theorem does not hold for our system $\ILms$. 

For a sequent $(\Gamma \Rightarrow \Delta)$, we say that a pair $[(\Gamma_1; \Delta_1), (\Gamma_2; \Delta_2)]$ of pairs of finite sets of formulas is a \textit{separation of $(\Gamma \Rightarrow \Delta)$} if $\Gamma_1 \cap \Gamma_2 = \Delta_1 \cap \Delta_2 = \varnothing$, $\Gamma = \Gamma_1 \cup \Gamma_2$, and $\Delta = \Delta_1 \cup \Delta_2$. 
For a finite set $\Gamma$ of formulas, let $\var(\Gamma) : = \bigcup\{\var(A) \mid A \in \Gamma\}$.

\begin{thm}\label{MM}
Let $\mathbf{L}$ be either $\ILms$ or $\ILmPs$. 
Suppose that $(\Gamma \Rightarrow \Delta)$ has a cut-free proof in $\mathbf{L}$. 
Then, for any separation $[(\Gamma_1; \Delta_1),(\Gamma_2; \Delta_2)]$ of $(\Gamma \Rightarrow \Delta)$, there exists a formula $C$ which satisfies the following conditions:
\begin{enumerate}
\item $\mathbf{L} \vdash (\Gamma_1 \Rightarrow \Delta_1, C)$ and $\mathbf{L} \vdash (\Gamma_2, C \Rightarrow \Delta_2)$;
\item $\var(C) \subseteq \var(\Gamma_1 \cup \Delta_1) \cap \var(\Gamma_2 \cup \Delta_2)$.
\end{enumerate}
\end{thm}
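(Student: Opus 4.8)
The plan is to prove the theorem by induction on the height of a cut-free proof of $(\Gamma \Rightarrow \Delta)$ in $\mathbf{L}$, establishing for each such proof the stronger statement that an interpolant exists for \emph{every} separation of the end-sequent. Before the case analysis I would record one structural reduction: if $C$ is an interpolant for $[(\Gamma_1;\Delta_1),(\Gamma_2;\Delta_2)]$, then $\neg C$ is an interpolant for the swapped separation $[(\Gamma_2;\Delta_2),(\Gamma_1;\Delta_1)]$, since $(\NL)$ and $(\NR)$ move the interpolant across the arrow and the variable condition is symmetric. This lets me assume throughout that the principal succedent formula of the last rule lies in $\Delta_2$, the mirror case following by swap-and-negate. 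The base cases are immediate: for $A \Rightarrow A$ the four placements of the two occurrences of $A$ are witnessed respectively by $\bot, A, \neg A, \top$, and for $\bot \Rightarrow$ by $\bot$ or $\top$.

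For the inductive step, the weakening rules and the propositional rules are handled exactly as in the classical Maehara method: given a separation of the conclusion, I induce the evident separation(s) of the premise(s), apply the induction hypothesis, and combine the resulting interpolants with the corresponding connective (leaving them unchanged for weakening), the variable conditions propagating routinely. The entire content of the theorem therefore lies in the modal rules.

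Consider the rule $(\rhd_\PP)$ with conclusion $\Omega, \{X_i \rhd Y_i\} \Rightarrow A \rhd B$, fix a separation, and assume $A \rhd B \in \Delta_2$. Write $\Omega = \Omega_1 \cup \Omega_2$ and split the indices into $I_1, I_2$ according to the side of each $X_i \rhd Y_i$. I would apply the induction hypothesis to the left premise $\Omega, \{X_i \rhd Y_i\}, A \rhd B, A \Rightarrow \{X_i\}$ under the separation that keeps $\Omega_1, \{X_i \rhd Y_i : i \in I_1\}$ and $\{X_i : i \in I_1\}$ on side $1$ while placing $\Omega_2, \{X_i \rhd Y_i : i \in I_2\}, A \rhd B, A$ and $\{X_i : i \in I_2\}$ on side $2$; this yields $G$ with $\Omega_1, \{X_i \rhd Y_i : i \in I_1\} \Rightarrow \{X_i : i \in I_1\}, G$ and $\Omega_2, \{X_i \rhd Y_i : i \in I_2\}, A \rhd B, A, G \Rightarrow \{X_i : i \in I_2\}$. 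For each $i \in I_1$ I would apply the hypothesis to the right premise $Y_i \Rightarrow B$ (with $Y_i$ on side $1$, $B$ on side $2$) to obtain $H_i$ with $Y_i \Rightarrow H_i$ and $H_i \Rightarrow B$. Setting $V := \bigvee_{k \in I_1} H_k$, I then claim that
\[
C := (\neg G) \rhd V
\]
is the required interpolant. Indeed, $(\NL)$ applied to the first sequent for $G$ puts $\neg G$ into the antecedent and $(\NR)$ applied to the second puts $\neg G$ into the succedent, so one reapplication of $(\rhd_\PP)$ with persistent context $\Omega_1$, principal formulas $\{X_i \rhd Y_i : i \in I_1\}$, and diagonal formula $C$ proves $\Gamma_1 \Rightarrow C$, while a second reapplication with persistent context $\Omega_2$, principal formulas $\{X_i \rhd Y_i : i \in I_2\}$ together with $C$, and diagonal formula $A \rhd B$ proves $\Gamma_2, C \Rightarrow A \rhd B$. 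The right premises needed are exactly $Y_i \Rightarrow V$ for $i \in I_1$ (from $Y_i \Rightarrow H_i$), the premise $V \Rightarrow B$ (from the $H_k \Rightarrow B$ by $(\lor\mathrm{l})$), and the original $Y_i \Rightarrow B$ for $i \in I_2$. The variable condition holds because the induction hypothesis gives $\var(G)$ and each $\var(H_i)$ inside $\var(\Gamma_1) \cap \var(\Gamma_2 \cup \{A \rhd B\})$, whence so is $\var(C)$. The rule $(\rhd)$ of $\ILms$ is treated by the same construction with $\Omega$ empty and with the left premise $A \Rightarrow \{X_i\}$ in place of the $(\rhd_\PP)$-premise.

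For $\ILms$ there remains the rule $(\Box)$ with conclusion $\Box \Gamma \Rightarrow \Box A$. Splitting $\Box\Gamma = \Box\Gamma^{(1)} \cup \Box\Gamma^{(2)}$ and assuming $\Box A \in \Delta_2$, I would apply the hypothesis to the premise $\Box\Gamma, \Gamma, \Box A \Rightarrow A$ with $\Box\Gamma^{(1)}, \Gamma^{(1)}$ on side $1$ and $\Box\Gamma^{(2)}, \Gamma^{(2)}, \Box A, A$ on side $2$, obtaining $C_0$; then $C := \Box C_0$ works, each half being re-derivable by $(\Box)$ after a weakening, and $\var(\Box C_0) = \var(C_0)$ satisfies the requirement (the mirror case yielding $\Diamond C_0$). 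The main obstacle is precisely this modal step: one must route the diagonal formula $A \rhd B$ and the auxiliary formula $A$ (and, in $(\rhd_\PP)$, the persistent context $\Omega$) to the correct side of the induced separation, and one must verify that the interpolant comes out as a $\rhd$-formula whose variables have already been purged of the premise-only material $A$, $\{X_i\}$, and $A \rhd B$ — so that reapplying the same modal rule simultaneously proves the two halves of condition~(1) and preserves condition~(2). Once this is in place, the CIP for $\IL^-(\PP)$ follows from Theorem~\ref{CE} by taking $\Gamma_1 = \{A\}$, $\Delta_2 = \{B\}$, $\Gamma_2 = \Delta_1 = \varnothing$, and the failure of cut-elimination for $\ILms$ follows from the known failure of CIP for $\IL^-$.
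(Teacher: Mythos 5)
Your proposal is correct and follows essentially the same route as the paper: the same induction over the cut-free proof, the same routing of the diagonal formula $A \rhd B$, the auxiliary $A$, and the persistent context to side 2 of the induced separation of the left premise, and the same key interpolant $(\neg G) \rhd \bigvee_{k} H_k$, which is exactly the paper's $(\neg D) \rhd E'$; the paper's other case (with $A \rhd B \in \Delta_1$, interpolant $\neg(D \rhd E')$) is precisely what your swap-and-negate reduction produces, so packaging it as a symmetry lemma is only a cosmetic difference. Your explicit $\Box C_0$ treatment of the $(\Box)$ rule (which the paper dismisses as ``similar'') is also correct.
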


\begin{proof}
(i) Firstly, we prove the theorem for $\mathbf{L} = \ILmPs$ by top-down induction on the position of the sequent $(\Gamma \Rightarrow \Delta)$ in a cut-free proof $\pi$ of the sequent in $\ILmPs$. 
The cases for an initial sequent and an instance of logical rules are straightforward. 
We only give a proof of the case that the last application of $\pi$ is $(\rhd_\PP)$. 
Such an application of $(\rhd_\PP)$ is of the form:  
\begin{equation*}
\infer[(\rhd_\PP)]{
\begin{array}{cc}
\Omega_1, & \{ X_i \rhd Y_i \}, \\
\Omega_2, & \{ Z_j \rhd W_j \}
\end{array}
\Rightarrow A \rhd B}
{\begin{array}{cc}
\Omega_1, &  \{ X_i \rhd Y_i \}, \\
\Omega_2, & \{ Z_j \rhd W_j \}, 
\end{array}
A \rhd B, A \Rightarrow
\begin{array}{c}
\{ X_i \} , \\
\{ Z_j\}
\end{array}
&
\langle Y_i \Rightarrow B \rangle
&
\langle W_j \Rightarrow B \rangle
}.
\end{equation*}
Here we consider a separation $[(\Gamma_1; \Delta_1),(\Gamma_2; \Delta_2)]$ of the lower sequent $(\Omega_1, \Omega_2, \{X_i \rhd Y_i\}, \{Z_j \rhd W_j\} \Rightarrow A \rhd B)$ with $\Gamma_1 = \Omega_1 \cup \{ X_i \rhd Y_i \}$ and $\Gamma_2 = \Omega_2 \cup \{ Z_j \rhd W_j \}$. 
We distinguish the following two cases.

\begin{enumerate}
\item Suppose that $\Delta_1 = \{A \rhd B\}$ and $\Delta_2 = \varnothing$. 
Take the following separations of the upper sequents:
\begin{align*}
& [(\Omega_1, \{ X_i \rhd Y_i \}, A \rhd B, A; \{ X_i \}),(\Omega_2, \{ Z_j \rhd W_j \}; \{ Z_j \})], \\
& [(\varnothing ; B), (W_j; \varnothing)], \text{ for each }j.
\end{align*}
By the induction hypothesis, there exist $D$ and $E_j$ for each $j$ such that $\ILmPs$ proves
\begin{itemize}
	\item $(\Omega_1, \{ X_i \rhd Y_i \}, A \rhd B, A \Rightarrow \{ X_i \}, D)$, 
	\item $(D, \Omega_2, \{ Z_j \rhd W_j \} \Rightarrow \{ Z_j \})$,
	\item $(\Rightarrow B, E_j)$, and 
	\item $(E_j, W_j \Rightarrow)$.
\end{itemize}
Let $E'$ be the formula $\bigvee \neg E_j$. 
Then, we have $\ILmPs \vdash E' \Rightarrow B$ because $\ILmPs \vdash \neg E_j \Rightarrow B$ for each $j$. 
Notice that if $\{ Z_j \rhd W_j \}$ is an empty set, then $E'$ is $\bot$, and hence $\ILmPs \vdash E' \Rightarrow B$ also holds. 
Similarly, we also have $\ILmPs \vdash W_j \Rightarrow E'$ for each $j$. 
Consider the following derivations:
\begin{itemize}
	\item 
\[
\infer[(\NR)]{\Omega_1, \{ X_i \rhd Y_i \} \Rightarrow A \rhd B, \neg (D \rhd E')}
{\infer[(\rhd_\PP)]{\Omega_1, \{ X_i \rhd Y_i \}, D \rhd E' \Rightarrow A \rhd B}
{\infer[(\WL)]{\Omega_1, \{ X_i \rhd Y_i \}, D \rhd E', A \rhd B, A \Rightarrow \{ X_i \}, D}
{\Omega_1, \{ X_i \rhd Y_i \}, A \rhd B, A \Rightarrow \{ X_i \}, D}
&
\langle Y_i \Rightarrow B \rangle
&
E' \Rightarrow B
}}, 
\] 
	\item 
\[
\infer[(\NL)]{\Omega_2, \{ Z_j \rhd W_j \}, \neg (D \rhd E') \Rightarrow }
{\infer[(\rhd_\PP)]{\Omega_2, \{ Z_j \rhd W_j \} \Rightarrow D \rhd E'}
{\infer[(\WL)]{\Omega_2, \{ Z_j \rhd W_j \}, D \rhd E' , D \Rightarrow \{ Z_j \}}
{\Omega_2, \{ Z_j \rhd W_j \}, D \Rightarrow \{ Z_j \}}
&
\langle W_j \Rightarrow E' \rangle
}}.
\]
\end{itemize}
Then, $C : \equiv \neg (D \rhd E')$ fulfills Conditions 1 and 2 in the statement of the theorem.

\item Suppose that $\Delta_1 = \varnothing$ and $\Delta_2 = \{A \rhd B\}$. 
Take the following separations of the upper sequents:
\begin{align*}
& [(\Omega_1, \{ X_i \rhd Y_i \} ; \{ X_i \}), (\Omega_2, \{ Z_j \rhd W_j \}, A \rhd B, A ; \{ Z_j \} )], \\
& [(Y_i ; \varnothing ), (\varnothing ; B )], \text{ for each } i.
\end{align*}
By the induction hypothesis, there exist interpolants $D$ and $E_i$ for each $i$ such that $\ILmPs$ proves
\begin{itemize}
	\item $(\Omega_1, \{ X_i \rhd Y_i \} \Rightarrow \{ X_i \}, D)$, 
	\item $(D, \Omega_2, \{ Z_j \rhd W_j \}, A\rhd B, A \Rightarrow \{ Z_j \})$, 
	\item $(Y_i \Rightarrow E_i)$, and 
	\item $(E_i \Rightarrow B)$.
\end{itemize}
Let $E'$ be $\bigvee E_i$. 
Then, we have $\ILmPs \vdash (E' \Rightarrow B)$ and $\ILmPs \vdash (Y_i \Rightarrow E')$ for each $i$. 
Consider the following derivations:
\begin{itemize}
\item 
\[
\infer[(\rhd_\PP)]{\Omega_1, \{ X_i \rhd Y_i \} \Rightarrow (\neg D) \rhd E'}
	{\infer[(\WL)]{\Omega_1, \{ X_i \rhd Y_i \}, (\neg D) \rhd E', \neg D \Rightarrow \{ X_i \}}
		{\infer[(\NL)]{\Omega_1, \{ X_i \rhd Y_i \}, \neg D \Rightarrow \{ X_i \}}
			{\Omega_1, \{ X_i \rhd Y_i \} \Rightarrow \{ X_i \}, D}
		}
	&
	\langle Y_i \Rightarrow E' \rangle
	},
\]
\item 
\[
\infer[(\rhd_\PP)]{\Omega_2, \{ Z_j \rhd W_j \}, (\neg D) \rhd E' \Rightarrow A \rhd B}
	{\infer[(\WL)]{\Omega_2, \{ Z_j \rhd W_j \}, (\neg D) \rhd E', A \rhd B , A \Rightarrow \{ Z_j \}, \neg D}
		{\infer[(\NR)]{\Omega_2, \{ Z_j \rhd W_j \}, A \rhd B , A \Rightarrow \{ Z_j \}, \neg D}
			{D, \Omega_2, \{ Z_j \rhd W_j \}, A \rhd B, A \Rightarrow \{ Z_j \}}}
	&
	\langle W_j \Rightarrow B \rangle
	&
	E' \Rightarrow B
	}.
\]
\end{itemize}
Then, $C : \equiv (\neg D) \rhd E'$ fulfills Conditions 1 and 2 of the theorem.
\end{enumerate}
This finishes the proof of the theorem for $\mathbf{L} = \ILmPs$.

(ii) Secondly, we give a proof for $\mathbf{L} = \ILms$. 
A proof is similar as in the case of $\mathbf{L} = \ILmPs$, and so we only sketch a proof of the case of the rule $(\rhd)$. 
An instance of the rule is
\begin{equation*}
\infer[(\rhd)]{\{ X_i \rhd Y_i \}, \{ Z_j \rhd W_j \} \Rightarrow A \rhd B}
	{A \Rightarrow \{ X_i \} ,\{ Z_j\}
	&
	\langle Y_i \Rightarrow B \rangle
	&
	\langle W_j \Rightarrow B \rangle
	}, 
\end{equation*}
with $\Gamma_1 = \{ X_i \rhd Y_i \}$ and $\Gamma_2 = \{ Z_j \rhd W_j \}$. 
We distinguish the following two cases.
\begin{enumerate}
\item Suppose that $\Delta_1 = \{ A \rhd B \}$ and $\Delta_2 = \varnothing$. Take the following separations of the upper sequents:
\begin{align*}
& [(A ; \{X_i \} ) , (\varnothing ; \{ Z_j\})], \\
& [(\varnothing ; B) , (W_j ; \varnothing)] \text{ for each } j.
\end{align*}
Let $D$ and $E_j$ be corresponding interpolants obtained from the induction hypothesis, and put $E' :\equiv \bigvee \neg E_j$. 
The following derivations show that the formula $C \equiv \neg (D \rhd E')$ is indeed an interpolant of the separation in question:
\begin{itemize}
\item
\[ \infer[(\NR)]{\{ X_i \rhd Y_i \} \Rightarrow A \rhd B, \neg (D \rhd E')}
	{\infer[(\rhd)]{\{X_i \rhd Y_i \}, D \rhd E' \Rightarrow A \rhd B}
		{A \Rightarrow \{ X_i \}, D
		&
		\langle Y_i \Rightarrow B \rangle
		&
		E' \Rightarrow B
		}
	}, \]
\item 
\[ \infer[(\NL)]{\neg D \rhd E', \{ Z_j \rhd W_j \} \Rightarrow }
	{\infer[(\rhd)]{\{Z_j \rhd W_j \} \Rightarrow D \rhd E'}
		{D \Rightarrow \{ Z_j \}
		&
		\langle W_j \Rightarrow E' \rangle
		}
	}.\]
\end{itemize}
\item Suppose that $\Delta_1 = \varnothing$ and $\Delta_2 = \{ A \rhd B \}$. 
Take the following separations of the upper sequents:
\begin{align*}
& [(\varnothing ; \{ X_i \}), (A ; \{ Z_j \} )], \\
& [(Y_i ; \varnothing ), (\varnothing ; B )], \text{ for each } i.
\end{align*}
Let $D$ and $E_i$ be corresponding interpolants, and put $E' :\equiv \bigvee E_i$. 
The following derivations show that the formula $C \equiv (\neg D) \rhd E'$ is indeed an interpolant of the separation.
\begin{itemize}
\item \[ \infer[(\rhd)]{\{ X_i \rhd Y_i \} \Rightarrow (\neg D) \rhd E'}
	{\infer[(\NL)]{\neg D \Rightarrow \{ X_i \}}{ \Rightarrow \{X_i \}, D}
	&
	\langle Y_i \Rightarrow E' \rangle
	}, \]
\item \[ \infer[(\rhd)]{\{ Z_j \rhd W_j \}, (\neg D) \rhd E' \Rightarrow A \rhd B}
	{\infer[(\NR)]{A \Rightarrow \{Z_j \}, \neg D}{D, A \Rightarrow \{Z_j\}}
	&
	\langle W_j \Rightarrow B \rangle
	&
	E' \Rightarrow B
	}.\tag*{\mbox{\qedhere}}\]
\end{itemize}
\end{enumerate}
\end{proof}

\begin{thm}[The Craig interpolation theorem for $\IL^-(\PP)$]\label{CIP}
The logic $\IL^-(\PP)$ enjoys CIP.
\end{thm}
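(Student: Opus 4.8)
The plan is to derive $\IL^-(\PP)$'s Craig interpolation property directly from Theorem~\ref{MM} together with the cut-elimination theorem (Theorem~\ref{CE}) and the equivalence between $\IL^-(\PP)$ and $\ILmPs$. The overall strategy is the standard route from Maehara's interpolation lemma to the logic-level interpolation property: translate an implicational theorem into a provable sequent, apply cut-elimination to obtain a cut-free proof, invoke Theorem~\ref{MM} to extract a sequent-level interpolant for a suitable separation, and finally translate the sequent-level interpolant back into a formula-level Craig interpolant.

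Concretely, suppose $\IL^-(\PP) \vdash A \to B$. First I would use the equivalence of $\IL^-(\PP)$ and $\ILmPs$ (Proposition following Theorem~\ref{lFPP}'s section, part~1) to conclude that $\ILmPs \vdash (\Rightarrow A \to B)$, and hence, by routine propositional manipulation (an application of $(\to\mathrm{r})$ read backwards, or equivalently $(\WL)$ and inversion), that $\ILmPs \vdash (A \Rightarrow B)$. By the cut-elimination theorem (Theorem~\ref{CE}), this sequent has a cut-free proof in $\ILmPs$. Now apply Theorem~\ref{MM} with $\mathbf{L} = \ILmPs$ to the sequent $(A \Rightarrow B)$ and the separation $[(A;\varnothing),(\varnothing;B)]$, so that $\Gamma_1 = \{A\}$, $\Delta_1 = \varnothing$, $\Gamma_2 = \varnothing$, and $\Delta_2 = \{B\}$. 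This yields a formula $C$ with $\ILmPs \vdash (A \Rightarrow C)$, $\ILmPs \vdash (C \Rightarrow B)$, and $\var(C) \subseteq \var(A) \cap \var(B)$.

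It remains to pull these two sequents back to the logic level. Using the soundness direction (Proposition part~2 of the equivalence), from $\ILmPs \vdash (A \Rightarrow C)$ we get $\IL^-(\PP) \vdash A \to C$, and from $\ILmPs \vdash (C \Rightarrow B)$ we get $\IL^-(\PP) \vdash C \to B$. Together with the variable condition $\var(C) \subseteq \var(A) \cap \var(B)$, this is exactly the statement that $C$ is a Craig interpolant for $A \to B$, establishing CIP for $\IL^-(\PP)$.

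I do not expect any serious obstacle here: all the heavy lifting has already been done in Theorem~\ref{CE} (cut-elimination) and Theorem~\ref{MM} (Maehara's interpolation lemma), so this final theorem is essentially a bookkeeping argument chaining those results through the syntactic equivalence of the sequent calculus and the Hilbert-style logic. The only points requiring a moment of care are the trivial translation between the implication $A \to B$ and the sequent $A \Rightarrow B$ (and back for $A \to C$ and $C \to B$), and verifying that the chosen separation of $(A \Rightarrow B)$ produces precisely the variable-sharing condition demanded by the definition of CIP; both are immediate from the definitions.
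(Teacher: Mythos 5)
Your proposal is correct and follows exactly the route the paper intends for Theorem~\ref{CIP}: chain the syntactic equivalence of $\IL^-(\PP)$ and $\ILmPs$, the cut-elimination theorem (Theorem~\ref{CE}), and Maehara's lemma (Theorem~\ref{MM}) with the separation $[(A;\varnothing),(\varnothing;B)]$, then translate back. The only cosmetic remark is that the passage from $(\Rightarrow A \to B)$ to $(A \Rightarrow B)$ is cleanest via $(\to\mathrm{l})$ and one application of $(\mathrm{cut})$ (subsequently removed by Theorem~\ref{CE}), since inversion is not a primitive rule of the system; this does not affect the argument.
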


From Fact~\ref{CIPFPP} and this theorem, we obtain an alternative proof of Theorem~\ref{lFPP} stating that $\IL^-(\PP)$ has $\ell$FPP. 

According to \cite[Section 6]{IKO20}, $\IL^-$ does not have CIP. 
Therefore, we conclude the failure of the cut-elimination theorem for $\ILms$. 

\begin{cor}\label{FCE}
There exists a sequent $(\Gamma \Rightarrow \Delta)$ that is provable in $\ILms$ but cannot be proved without the rule $(\mathrm{cut})$ in $\ILms$. 
\end{cor}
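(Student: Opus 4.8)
The plan is to derive Corollary~\ref{FCE} as a direct contrapositive consequence of the two facts already assembled: Theorem~\ref{MM}, which shows Maehara's method yields interpolants for any sequent possessing a \emph{cut-free} proof in $\ILms$, and the fact, cited from \cite{IKO20}, that $\IL^-$ fails CIP. The key observation is that Theorem~\ref{MM} is stated for \emph{both} $\ILms$ and $\ILmPs$, so the interpolation machinery is available for $\ILms$ provided a cut-free proof exists --- and this conditional availability is precisely the lever that exposes the failure of cut-elimination.

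First I would unwind the failure of CIP for $\IL^-$ into a concrete witness. Since $\IL^-$ does not have CIP, there exist $\mathcal{L}(\rhd)$-formulas $A$ and $B$ with $\IL^- \vdash A \to B$ but for which no interpolant $C$ with $\var(C) \subseteq \var(A) \cap \var(B)$ satisfies $\IL^- \vdash A \to C$ and $\IL^- \vdash C \to B$. By Proposition~\ref{IL^-equiv}.1, since $\IL^- \vdash A \to B$, the sequent $(A \Rightarrow B)$ is provable in $\ILms$; this is the sequent $(\Gamma \Rightarrow \Delta)$ that I claim cannot be proved cut-free. I would take the separation $[(A; \varnothing), (\varnothing; B)]$ of $(A \Rightarrow B)$, for which $\Gamma_1 = \{A\}$, $\Delta_1 = \varnothing$, $\Gamma_2 = \varnothing$, $\Delta_2 = \{B\}$.

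Next comes the heart of the argument, which is really a proof by contradiction. Suppose, for contradiction, that $(A \Rightarrow B)$ \emph{does} admit a cut-free proof in $\ILms$. Then Theorem~\ref{MM} applies to this separation and produces a formula $C$ with $\var(C) \subseteq \var(A) \cap \var(B)$ such that $\ILms \vdash (A \Rightarrow C)$ and $\ILms \vdash (C \Rightarrow B)$. Applying Proposition~\ref{IL^-equiv}.2 to each of these provable sequents yields $\IL^- \vdash A \to C$ and $\IL^- \vdash C \to B$, so $C$ is an interpolant for $A \to B$ in $\IL^-$, contradicting the choice of $A$ and $B$ as a witness to the failure of CIP. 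Hence no cut-free proof of $(A \Rightarrow B)$ exists in $\ILms$, while $(A \Rightarrow B)$ is provable in $\ILms$ by construction; this establishes the corollary.

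I do not expect a genuine obstacle here, since every ingredient has already been proved in the excerpt; the only point requiring care is verifying that Theorem~\ref{MM} is genuinely applicable to $\ILms$ (not merely to $\ILmPs$), which is explicit in its hypothesis ``$\mathbf{L}$ is either $\ILms$ or $\ILmPs$,'' and that the direction of the soundness translation in Proposition~\ref{IL^-equiv}.2 matches the interpolation conditions, which it does via $\bigwedge\{A\} \to \bigvee\{C\}$ reducing to $A \to C$. The conceptual subtlety worth flagging in the writeup is that cut-elimination cannot fail for trivial reasons: it is exactly the tension between the unconditional provability of $(A \Rightarrow B)$ and the CIP-violating choice of $A, B$ that forces the nonexistence of a cut-free derivation.
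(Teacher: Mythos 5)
Your proposal is correct and follows essentially the same route as the paper: the paper also obtains Corollary~\ref{FCE} by combining Theorem~\ref{MM} (Maehara's method for cut-free $\ILms$-proofs) with the failure of CIP for $\IL^-$ cited from \cite{IKO20}, via Proposition~\ref{IL^-equiv}; you have merely unwound the same contrapositive into a concrete witness sequent. The one small point to tighten is that Proposition~\ref{IL^-equiv}.1 literally gives $\ILms \vdash (\Rightarrow A \to B)$ rather than $\ILms \vdash (A \Rightarrow B)$; since the sequent $(A \to B, A \Rightarrow B)$ is derivable from initial sequents by $(\to \mathrm{l})$, one application of $(\mathrm{cut})$ yields $(A \Rightarrow B)$, which is harmless here because only provability with cut is needed for the witness.
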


\section{Relational semantic aspects}\label{Sec_SV}

In this section, we investigate relational semantics and the modal completeness for $\IL^-(\PP)$. 
This section consists of three subsections. 
In the first subsection, we introduce two kinds of relational semantics of $\IL^-(\PP)$, namely, $\IL^-\!$-frames and their simplifications. 
In particular, we show that $\IL^-(\PP)$ is valid in all such simplified frames. 
Thus, $\IL^-(\PP)$ is the natural basis for the investigation of a certain relational semantics. 
In the second subsection, we prove the modal completeness theorems of $\IL^-(\PP)$ with respect to these relational semantics. 
Finally, in the last subsection, we show that $\IL^-(\PP)$ is faithfully embeddable into several extensions of the fusion $\GLK$ of $\GL$ and $\K$. 



\subsection{Two relational semantics}

At first, we introduce \textit{$\IL^-\!$-frames} which were originally introduced by Visser~\cite{Vis88} as Veltman prestructures (See also~\cite[Definition 2.2]{KO21}).

\begin{defn}
We say that a triple $(W, R, \{S_{w}\}_{w \in W})$ is an \textit{$\IL^-\!$-frame} if the following conditions hold:
\begin{enumerate}
	\item $W$ is a non-empty set;
	\item $R$ is a transitive and conversely well-founded binary relation on $W$;
	\item For each $w \in W$, $S_{w} \subseteq R[w] \times W$ where $R[w] := \{x \in W \mid w {R} x\}$. 
\end{enumerate}
A quadruple $(W, R, \{S_{w}\}_{w \in W}, \Vdash)$ is said to be an \textit{$\IL^-\!$-model} if $(W, R, \{S_{w}\}_{w \in W})$ is an $\IL^- \!$-frame and $\Vdash$ is a binary relation between $W$ and the set of all $\mathcal{L}(\rhd)$-formulas satisfying the usual conditions for satisfaction relation and the following conditions:
\begin{itemize}
	\item $w \Vdash \Box A :\iff (\forall x \in W)(w {R} x \Rightarrow x \Vdash A)$; 
	\item $w \Vdash A \rhd B :\iff (\forall x \in W)\bigl(w {R} x \ \&\ x \Vdash A \Rightarrow (\exists y \in W)(x {S_{w}} y\ \&\ y \Vdash B)\bigr)$. 
\end{itemize}
A modal formula $A$ is said to be \textit{valid} in an $\IL^- \!$-frame $(W, R, \{S_{w}\}_{w \in W})$ if for all $\IL^- \!$-models ${(W, R, \{S_{w}\}_{w \in W}, \Vdash)}$ based on the frame and all $w \in W$, $w \Vdash A$. 
\end{defn}

It is easily shown that every theorem of $\IL^-$ is valid in all $\IL^-\!$-frames. 
Moreover, the modal completeness of $\IL^-$ with respect to $\IL^- \!$-frames has been proved. 

\begin{fact}[{\cite[Theorem 5.1]{KO21}}]
For any $\mathcal{L}(\rhd)$-formula $A$, the following are equivalent: 
\begin{enumerate}
	\item $\IL^- \vdash A$. 
	\item $A$ is valid in all $\IL^-\!$-frames. 
	\item $A$ is valid in all finite $\IL^- \!$-frames. 
\end{enumerate}
\end{fact}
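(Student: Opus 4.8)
The plan is to prove the three-way equivalence by the cycle $(1) \Rightarrow (2) \Rightarrow (3) \Rightarrow (1)$, where the first two links are routine and the real content lies in completeness with a finite countermodel. For soundness $(1) \Rightarrow (2)$ I would argue by induction on the length of an $\IL^-$-proof, checking that each axiom is valid and each rule preserves validity. The axioms $\G{2}$ and $\G{3}$ hold because $(W,R)$ is a $\GL$-frame --- indeed conversely well-foundedness of $R$ is precisely what validates the L\"ob axiom $\G{3}$ --- while $\G{1}$ is immediate. For $\J{6}$ one simply unwinds the clause for $\rhd$: since no world forces $\bot$, $w \Vdash (\neg A) \rhd \bot$ reduces to $(\forall x)(w R x \Rightarrow x \Vdash A)$, which is exactly $w \Vdash \Box A$. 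The axiom $\J{3}$ and the rules $\R{1}, \R{2}$ follow by chasing the same family $\{S_w\}_{w \in W}$ through the satisfaction clause for $\rhd$, and Necessitation and Modus Ponens are handled as usual. The implication $(2) \Rightarrow (3)$ is trivial, since finite $\IL^-\!$-frames form a subclass of all $\IL^-\!$-frames.

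For completeness $(3) \Rightarrow (1)$ I would work contrapositively: assuming $\IL^- \not\vdash A$, I build a finite $\IL^-\!$-model in which $A$ fails. First I fix a finite set $\Phi$ adequate for $A$ (closed under subformulas and single negations) and let $W$ be the set of maximal $\IL^-$-consistent subsets of $\Phi$. I define $R$ in the standard $\GL$-fashion, requiring that $w R v$ transport every $\Box B \in w$ together with its witness $B$ into $v$ and include a strict-progress clause forcing the stock of boxed formulas to grow along $R$; finiteness of $\Phi$ then makes $R$ transitive and conversely well-founded automatically. Since $\IL^-\!$-frames impose no conditions on $S_w$ beyond $S_w \subseteq R[w] \times W$, I have wide latitude in defining the relations $S_w$, and I would choose them to record exactly the $\rhd$-demands of $w$, linking $x$ to $y$ precisely when some $\rhd$-formula witnessed at $w$ connects them.

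The heart of the argument is the Truth Lemma, $w \Vdash B \iff B \in w$ for all $B \in \Phi$, proved by induction on $B$; the Boolean and $\Box$ cases are the standard $\GL$ ones, so everything turns on the case $B = C \rhd D$. For the positive direction, if $C \rhd D \in w$ and $w R x$ with $C \in x$, I must manufacture a world $y$ with $x S_w y$ and $D \in y$, which amounts to extending a suitable $\IL^-$-consistent set containing $D$ to a member of $W$ and certifying the link $x S_w y$ from the definition of $S_w$. For the negative direction, if $C \rhd D \notin w$, I must locate $x$ with $w R x$, $C \in x$, and \emph{no} $S_w$-successor of $x$ containing $D$, which is where the freedom in choosing $S_w$ together with the strict-progress clause of $R$ is used to place such an $x$ strictly $R$-above $w$. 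The hard part will be exactly this $\rhd$-case: the same relation $R$ must be kept conversely well-founded while the relations $S_w$ are tuned to certify both halves of the Truth Lemma simultaneously, and these demands pull the construction in opposite directions. Once the Truth Lemma is established, any world containing $\neg A$ --- and one exists because $\neg A$ is $\IL^-$-consistent --- refutes $A$ in a finite $\IL^-\!$-model, yielding $(3) \Rightarrow (1)$.
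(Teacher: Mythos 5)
Your soundness argument and the implication $(2) \Rightarrow (3)$ are fine, and your overall completeness strategy (finite adequate set, maximal consistent subsets, truth lemma) points in the right direction. But there is a genuine gap at exactly the point you flag as ``the hard part'': with worlds taken to be maximal consistent subsets of $\Phi$, the relations $S_w$ \emph{cannot} in general be tuned to certify both halves of the Truth Lemma, no matter how much latitude the frame conditions leave you. Concretely, suppose $C \rhd D \notin w$ and $C \rhd \sim D \notin w$, while some $C' \rhd D' \in w$. Each refuted formula needs a witness $x$ above $w$ containing $C$ whose $S_w$-successors all avoid $D$ (respectively, all avoid $\sim D$). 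Nothing prevents the situation where only one maximal consistent set $\Delta \subseteq \Phi$ containing $C$ is available above $w$; then $\Delta$ must serve as witness for both refutations, so its $S_w$-successors must avoid both $D$ and $\sim D$. But if $C' \in \Delta$, the positive direction forces $\Delta$ to have some $S_w$-successor $\Theta$, and $\Theta$, being maximal in $\Phi$, contains $D$ or $\sim D$ --- contradiction. Since a world in your construction \emph{is} its theory, two witnesses with identical theories but different successor behaviour simply cannot exist, and enlarging $\Phi$ does not help: the needed distinction is not expressible by formulas.

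The missing idea is world duplication with markers, which is how \cite[Theorem 5.1]{KO21} proves this Fact and how the present paper proves the analogous Theorem~\ref{MCILP} for $\IL^-(\PP)$: worlds are \emph{pairs} $(\Gamma, B)$ with $\Gamma$ a $\Phi$-maximally consistent set and $B \in \Phi_{\rhd}$ a marker recording which formula the world's $S$-successors must avoid. One sets $(\Gamma, B) \mathrel{R} (\Delta, C)$ iff $\Gamma \prec \Delta$, and $(\Delta, C) \mathrel{S_{(\Gamma,B)}} (\Theta, D)$ iff $(\Gamma,B) \mathrel{R} (\Delta, C)$ and (if $\Gamma \prec_C \Delta$) ${\sim}C \in \Theta$; so the same theory $\Delta$ appears as many different worlds, one per forbidden formula, dissolving the conflict above. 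Making this work also requires the specific closure conditions on adequate sets (notably $\Box(B \to \bigvee_i C_i) \in \Phi$ for $B, C_i \in \Phi_{\rhd}$, as in Definition~\ref{Def:ade}) together with the two existence lemmas (Facts~\ref{MClem2} and~\ref{MClem3}): one producing, for $C \rhd D \notin \Gamma$, a $\Delta$ with $C \in \Delta$ and $\Gamma \prec_D \Delta$; the other producing, for $C \rhd D \in \Gamma$ with $\Gamma \prec_E \Delta$ and $C \in \Delta$, a $\Theta$ with $D, {\sim}E \in \Theta$. Your sketch contains neither the pair construction nor these lemmas, so the $\rhd$-case of your Truth Lemma cannot be completed as proposed.
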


The validity of an $\mathcal{L}(\rhd)$-formula in an $\IL^-\!$-frame is sometimes characterized by some condition on the binary relations on the frame (cf.~\cite{KO21,Vis88}). 
Regarding the principle $\PP$, it is easily shown that the following known condition on Veltman frames works also for $\IL^-\!$-frames (See Visser~\cite[p.~15]{Vis88}). 

\begin{prop}\label{FCP}
Let $\mathcal{F} = (W, R, \{S_{w}\}_{w \in W})$ be any $\IL^- \!$-frame. 
Then, the following are equivalent. 
\begin{enumerate}
	\item $\PP$ is valid in $\mathcal{F}$. 
	\item $(\forall w, x, y, z \in W) (w {R} x {R} y {S_{w}} z \Rightarrow y {S_{x}} z)$. 
\end{enumerate}
\end{prop}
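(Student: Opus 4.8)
The plan is to prove the two implications separately, exploiting that one direction is a direct semantic verification while the other is most naturally handled by contraposition via an explicit falsifying model. Throughout I fix the frame $\mathcal{F} = (W, R, \{S_w\}_{w \in W})$ and work with arbitrary $\IL^-\!$-models based on it.

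First I would dispatch $(2) \Rightarrow (1)$, which I expect to be routine. Assuming condition (2), fix any model on $\mathcal{F}$, a world $w$, and suppose $w \Vdash A \rhd B$; the goal is $w \Vdash \Box(A \rhd B)$. Take any $x$ with $w R x$ and show $x \Vdash A \rhd B$: given $y$ with $x R y$ and $y \Vdash A$, transitivity of $R$ gives $w R y$, so from $w \Vdash A \rhd B$ there is a $z$ with $y S_w z$ and $z \Vdash B$. Now the chain $w R x$, $x R y$, $y S_w z$ together with condition (2) yields exactly $y S_x z$, so $z$ is the required $S_x$-successor of $y$ forcing $B$. Hence $x \Vdash A \rhd B$, and since $x$ was an arbitrary $R$-successor of $w$, we conclude $w \Vdash \Box(A \rhd B)$.

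For $(1) \Rightarrow (2)$ I would argue contrapositively. Suppose (2) fails, so there are $w, x, y, z$ with $w R x$, $x R y$, $y S_w z$ but $\lnot (y S_x z)$. I would build a model on $\mathcal{F}$ falsifying $\PP$ at $w$ using two propositional variables $p, q$, with the valuation making $p$ true exactly at $y$ and $q$ true exactly at $z$. Taking $A := p$ and $B := q$, the only $R$-successor of $w$ forcing $p$ is $y$ (note $w R y$ by transitivity), and $y S_w z$ with $z \Vdash q$ witnesses $w \Vdash p \rhd q$. On the other hand, $x R y$ and $y \Vdash p$, while the unique $q$-world $z$ is, by hypothesis, \emph{not} an $S_x$-successor of $y$; thus $y$ has no $S_x$-successor forcing $q$, so $x \not\Vdash p \rhd q$. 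Since $w R x$, this gives $w \not\Vdash \Box(p \rhd q)$, whence $\PP$ is refuted at $w$, contradicting (1).

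I do not expect a serious obstacle here; the only real care is in the contrapositive direction, where I would verify that the pointwise valuation $V(p) = \{y\}$, $V(q) = \{z\}$ still works under degenerate coincidences among $w, x, y, z$ (for instance $z = y$, which can occur when $S_w$ relates a point to itself). The crucial observation making all such cases uniform is that the hypothesis $\lnot (y S_x z)$ continues to block every potential witness for $x \Vdash p \rhd q$, regardless of whether $z$ coincides with $y$, $x$, or $w$, because $z$ remains the unique world forcing $q$. A final one-sentence remark handles the case where $\{Z_j \rhd W_j\}$-style side data is empty: the valuation is well-defined on all of $W$ and the argument is unaffected.
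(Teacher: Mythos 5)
Your proof is correct. The paper itself does not spell out a proof of this proposition (it is dismissed as ``easily shown,'' with a pointer to Visser), and your argument --- the routine direct verification for $(2) \Rightarrow (1)$ using transitivity of $R$, and a two-variable countermodel with $V(p)=\{y\}$, $V(q)=\{z\}$ for the contrapositive of $(1) \Rightarrow (2)$ --- is precisely the standard argument being alluded to, including the correct observation that degenerate coincidences among $w,x,y,z$ cause no harm since $z$ stays the unique $q$-world and $\lnot(y \mathrel{S_x} z)$ blocks every witness. The only blemish is your final sentence about ``$\{Z_j \rhd W_j\}$-style side data,'' which is vacuous here (that notation belongs to the paper's sequent-calculus section, not to this semantic statement) but does not affect the correctness of the proof.
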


The treatment of the family $\{S_w\}_{w \in W}$ in $\IL^-\!$-frames is somewhat complicated. 
The idea of simplifying them by converting them into a single relation $S$ was first done by Visser~\cite[Section 16]{Vis88} for $\IL^-\!$-frames where $\IL$ is valid. 
Our second semantics is obtained by implementing the idea on $\IL^-\!$-frames. 

\begin{defn}\label{Def:SILPF}
We say that a triple $(W, R, S)$ is a \textit{simplified $\IL^-(\PP)$-frame} if the following conditions hold:
\begin{enumerate}
	\item $W$ is a non-empty set;
	\item $R$ is a transitive and conversely well-founded binary relation on $W$;
	\item $S$ is a binary relation on $W$. 
\end{enumerate}
A \textit{simplified $\IL^-(\PP)$-model} is a quadruple $(W, R, S, \Vdash)$ where $(W, R, S)$ is a simplified $\IL^-(\PP)$-frame and $\Vdash$ is a satisfaction relation satisfying the following condition: 
\begin{itemize}
	\item[(a)] $w \Vdash A \rhd B :\iff (\forall x \in W)\bigl(w {R} x\ \&\ x \Vdash A \Rightarrow (\exists y \in W)(x {S} y\ \&\ y \Vdash B)\bigr)$.
\end{itemize}
\end{defn}

In the literature such as~\cite{Vis88}, a satisfaction relation $\Vdash$ on simplified frames is usually defined so that 
\begin{itemize}
	\item[(b)] $w \Vdash A \rhd B :\iff (\forall x \in W)\bigl(w {R} x\ \&\ x \Vdash A \Rightarrow (\exists y \in W)(w {R} y\ \&\ x {S} y\ \&\ y \Vdash B)\bigr)$. 
\end{itemize}
In contexts that adopt the definition (b), the logics under consideration contain the axiom scheme $\J{4}_+$, and models based on the definition (b) always validate $\J{4}_+$.
On the other hand, we adopt the definition (a) because we are dealing with logics that do not necessarily contain $\J{4}_+$ as an axiom scheme. 
However, in contrast to the case of $\IL^-\!$-frames, our simplified $\IL^-(\PP)$-frames always validate the persistence principle $\PP$. 
This is the reason why we adopted the terminology `simplified $\IL^-(\PP)$-frames' in Definition~\ref{Def:SILPF}. 
 
\begin{prop}\label{Psou}
The principle $\PP$ is valid in all simplified $\IL^-(\PP)$-frames. 
\end{prop}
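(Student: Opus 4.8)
The plan is to show directly that the persistence principle $\PP$: $A \rhd B \to \Box(A \rhd B)$ holds at every world of every simplified $\IL^-(\PP)$-model. Recall that in this semantics $\Box C$ is shorthand for $(\neg C) \rhd \bot$, but since the satisfaction of $\rhd$ is spelled out by clause (a), it is cleaner to first verify that $w \Vdash \Box C$ unwinds to the expected condition $(\forall x \in W)(w R x \Rightarrow x \Vdash C)$, and then argue semantically. So let $(W, R, S, \Vdash)$ be an arbitrary simplified $\IL^-(\PP)$-model, fix $w \in W$, and assume $w \Vdash A \rhd B$. I must show $w \Vdash \Box(A \rhd B)$, i.e.\ that $x \Vdash A \rhd B$ for every $x$ with $w R x$.

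First I would fix such an $x$ with $w R x$ and aim to verify the clause (a) condition for $A \rhd B$ at $x$. So take any $y \in W$ with $x R y$ and $y \Vdash A$; I need to produce a $z$ with $y S z$ and $z \Vdash B$. The key observation is that the relation $R$ is transitive: from $w R x$ and $x R y$ we get $w R y$. Now I apply the assumption $w \Vdash A \rhd B$ to the world $y$: since $w R y$ and $y \Vdash A$, clause (a) at $w$ yields some $z \in W$ with $y S z$ and $z \Vdash B$. This is exactly the witness required to conclude that the clause (a) condition for $A \rhd B$ holds at $x$, so $x \Vdash A \rhd B$.

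Since $x$ was an arbitrary $R$-successor of $w$, this gives $w \Vdash \Box(A \rhd B)$, completing the argument. The point I would emphasize is that the single relation $S$ is shared across all worlds, so there is no need for a frame condition linking $S_x$ and $S_w$ of the kind appearing in Proposition~\ref{FCP}: the transfer of the witness $z$ from $w$ to $x$ is automatic. This is precisely why simplified $\IL^-(\PP)$-frames validate $\PP$ unconditionally, in contrast to $\IL^-\!$-frames, where validity of $\PP$ requires the condition $(\forall w,x,y,z)(w R x R y S_w z \Rightarrow y S_x z)$.

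I do not expect any serious obstacle here, as the whole argument is a short chase through the satisfaction clauses using transitivity of $R$. The only point deserving care is the bookkeeping of which world's $\rhd$-clause is being invoked: the assumption is read off at $w$ but applied to the world $y$ reached via the composite step $w R x R y$, and the conclusion is read off at $x$. Making this flow of worlds explicit is the entirety of the work.
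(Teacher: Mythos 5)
Your proof is correct and is essentially identical to the paper's own argument: fix $x$ with $w R x$ and $y$ with $x R y$ and $y \Vdash A$, use transitivity of $R$ to get $w R y$, and apply $w \Vdash A \rhd B$ at $y$ to obtain the witness $z$ with $y S z$ and $z \Vdash B$, which serves directly at $x$ because $S$ is a single world-independent relation. The only cosmetic difference is your preliminary remark about unwinding $\Box$; in this section of the paper $\Box$ is a primitive symbol with the standard Kripke clause, so that step is unnecessary (though harmless, since the two readings agree on simplified frames).
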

\begin{proof}
Let $(W, R, S, \Vdash)$ be any $\IL^-(\PP)$-model and let $w \in W$. 
Suppose $w \Vdash A \rhd B$. 
We show $w \Vdash \Box(A \rhd B)$. 
Let $x, y \in W$ be such that $w {R} x$, $x {R} y$, and $y \Vdash A$. 
Since $w {R} y$ and $w \Vdash A \rhd B$, there exists a $z \in W$ such that $y {S} z$ and $z \Vdash B$. 
Therefore, we conclude $x \Vdash A \rhd B$ and we obtain $w \Vdash \Box(A \rhd B)$. 
\end{proof}

In the next subsection, we prove that the logic $\IL^-(\PP)$ is actually characterized by the class of all simplified $\IL^-(\PP)$-frames.

\subsection{Modal completeness}



In this subsection, we prove the modal completeness theorems for $\IL^-(\PP)$. 
Before proving our theorems, we prepare some definitions. 

For any set $\Phi$ of $\mathcal{L}(\rhd)$-formulas, let 
\[
	\Phi_{\rhd} := \{B \mid \text{there exists a} \, \, C \, \, \text{such that} \, \, B \rhd C \in \Phi \, \, \text{or} \, \, C \rhd B \in \Phi\}. 
\]
For any $\mathcal{L}(\rhd)$-formula $A$, let
\begin{eqnarray*}
{\sim} A :\equiv \left\{
\begin{array}{ll}
B & \text{if}\ A\ \text{is of the form}\ \neg B\ \text{for some}\ B, \\
\neg A & \text{otherwise.}
\end{array}
\right.
\end{eqnarray*}
A finite set $\Gamma$ of $\mathcal{L}(\rhd)$-formulas is said to be \textit{consistent} if $\IL^{-}(\PP) \nvdash \bigwedge \Gamma \to \bot$. 
Let $\Phi$ be any finite set of $\mathcal{L}(\rhd)$-formulas. 
A subset $\Gamma$ of $\Phi$ is said to be \textit{$\Phi$-maximally consistent} if $\Gamma$ is consistent and for any $A \in \Phi$, either $A \in \Gamma$ or ${\sim}A \in \Gamma$. 
Notice that if $X \subseteq \Phi$ is consistent, then there exists a $\Phi$-maximally consistent set $\Gamma$ including $X$. 
Moreover, if $\Gamma$ is a $\Phi$-maximally consistent set and $\IL^-(\PP) \vdash \bigwedge \Gamma \to A$ for $A \in \Phi$, then $A \in \Gamma$. 

\begin{defn}\label{Def:ade}
We say that a set $\Phi$ of $\mathcal{L}(\rhd)$-formulas is \textit{adequate} if it satisfies the following conditions:
\begin{enumerate}
	\item $\Phi$ is closed under taking subformulas and applying $\sim$;
	\item $\bot \in \Phi_{\rhd}$;
	\item If $B, C \in \Phi_{\rhd}$, then $B \rhd C \in \Phi$;
	\item If $B \rhd C \in \Phi$, then $\Box(B \rhd C) \in \Phi$;
	\item If $B \in \Phi_{\rhd}$, then $\Box{\sim}B \in \Phi$;
	\item If $B, C_{1}, \ldots, C_{n} \in \Phi_{\rhd}$, then $\Box(B \to \bigvee_{1 \leq i \leq n} C_{i}) \in \Phi$.
\end{enumerate}
\end{defn}

The following proposition is easily proved. 

\begin{prop}\label{finade}
Let $X$ be any finite set of $\mathcal{L}(\rhd)$formulas. 
Then, there exists a finite adequate set $\Phi$ including $X$. 
\end{prop}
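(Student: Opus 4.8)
\textbf{Proof proposal for Proposition~\ref{finade}.}

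The plan is to build $\Phi$ by starting from $X$ and repeatedly closing under the six conditions of Definition~\ref{Def:ade}, while arguing that this closure process terminates so that the resulting set is finite. The key observation is that all the conditions except the first generate formulas built only from members of $\Phi_{\rhd}$ via the operators $\rhd$, $\Box$, $\neg$, $\to$, and $\bigvee$, so the entire construction is controlled by the finite set $\Phi_{\rhd}$. Concretely, I would first let $X_0$ be the closure of $X \cup \{\bot\}$ under subformulas and under $\sim$; this is finite because taking subformulas strictly decreases complexity and applying $\sim$ at most doubles the set. This handles Condition~1 and, by throwing in $\bot$, sets up Condition~2.

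Next I would identify the finite base set $B_0 := (X_0)_{\rhd} \cup \{\bot\}$ of all antecedents and consequents of $\rhd$-formulas appearing in $X_0$, together with $\bot$. The crucial point is that Conditions~3--6 only ever require formulas of the forms $C \rhd D$, $\Box(C \rhd D)$, $\Box{\sim}C$, and $\Box(C \to \bigvee_{i} D_i)$, where the constituents $C, D, D_i$ are drawn from $B_0$. Since $B_0$ is finite, there are only finitely many such formulas: at most $|B_0|^2$ formulas of the form $C \rhd D$ (and the same count for their boxed versions), at most $|B_0|$ formulas $\Box{\sim}C$, and finitely many formulas $\Box(C \to \bigvee_{i} D_i)$ because the disjunctions range over subsets of the finite set $B_0$. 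I would collect $X_0$ together with all these finitely many formulas, and then close the whole thing once more under subformulas and $\sim$ to restore Condition~1.

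The main point requiring care is that this final subformula-and-$\sim$ closure does not reopen Conditions~3--6, i.e.\ that the process genuinely stabilizes after finitely many rounds. This is where I would spend the argument: one checks that taking subformulas of the newly added formulas $C \rhd D$, $\Box(C \rhd D)$, $\Box{\sim}C$, and $\Box(C \to \bigvee_{i} D_i)$ produces only formulas already controlled by $B_0$, so that the set $\Phi_{\rhd}$ of the enlarged set equals $B_0$ and no new obligations under Conditions~3--6 are triggered. Formally I would define $\Phi$ as the result of alternately applying "subformula/$\sim$ closure'' and "Conditions 2--6 closure'' and verify that a single pass of each suffices, or else phrase the whole thing as a least fixed point and invoke finiteness of the relevant signature to conclude termination. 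The verification that every condition of Definition~\ref{Def:ade} holds for the final $\Phi$ is then a direct inspection, and the inclusion $X \subseteq \Phi$ is immediate from the first step.
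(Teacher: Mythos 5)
Your proposal is correct, and it supplies an argument that the paper omits entirely: the paper merely asserts that Proposition~\ref{finade} ``is easily proved'', adding only the remark that finiteness hinges on $\Box$ being a primitive symbol rather than an abbreviation of $(\neg A)\rhd\bot$. Your construction is precisely the intended argument behind that remark: the decisive observation is that every formula demanded by Conditions 3--6 of Definition~\ref{Def:ade} has all of its $\rhd$-components drawn from the fixed finite set $B_0$, so after one round of Conditions 3--6 followed by one subformula/$\sim$ closure the set of $\rhd$-formulas has components only in $B_0$, hence $\Phi_{\rhd} = B_0$ and no new obligations arise. (This is exactly where primitivity of $\Box$ enters: had $\Box C$ abbreviated $(\neg C)\rhd\bot$, Condition 4 would inject $\neg(B\rhd C)$ into $\Phi_{\rhd}$ and the closure would never terminate.) Your stabilization check is also sound, since $X_0$ is subformula-closed, so $\rhd$-subformulas of members of $B_0$ already lie in $X_0$ with their components in $(X_0)_{\rhd} \subseteq B_0$.

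Two small points to tighten. First, Condition 6 must be read with canonical disjunctions --- one formula $\Box(B \to \bigvee_i C_i)$ per finite subset of $\Phi_{\rhd}$ in a fixed order without repetitions --- since otherwise no finite set satisfies it literally ($\bot\lor\bot$, $\bot\lor\bot\lor\bot$, etc.\ would all be required); your count over ``subsets of $B_0$'' implicitly adopts this convention, as does the paper. Second, merely ``throwing in $\bot$'' does not by itself secure Condition 2, which requires $\bot \in \Phi_{\rhd}$, i.e.\ that some $\rhd$-formula of $\Phi$ has $\bot$ as a component; in your construction this is discharged only at the next stage, when the Condition-3 closure over $B_0 \ni \bot$ adds, for instance, $\bot\rhd\bot$. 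Both points are cosmetic and the argument as structured goes through.
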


Notice that the finiteness of $\Phi$ in Proposition~\ref{finade} is guaranteed by our setting that $\Box$ is in our language $\mathcal{L}(\rhd)$ as a primitive symbol, and $\Box A$ is not an abbreviation for $(\neg A) \rhd \bot$. 
We are ready to prove our first modal completeness theorem. 

\begin{thm}[The modal completeness theorem for $\IL^-(\PP)$ with respect to $\IL^-\!$-frames]
\label{MCILP}
For any $\mathcal{L}(\rhd)$-formula $A$, the following are equivalent: 
\begin{enumerate}
	\item $\IL^-(\PP) \vdash A$. 
	\item $A$ is valid in all $\IL^- \!$-frames in which $\PP$ is valid. 
	\item $A$ is valid in all finite $\IL^- \!$-frames in which $\PP$ is valid. 
\end{enumerate}
\end{thm}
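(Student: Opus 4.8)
The plan is to prove the equivalence of the three statements by the standard cycle $(1)\Rightarrow(2)\Rightarrow(3)\Rightarrow(1)$, where the first two implications are the easy (soundness) direction and the last is the substantial (completeness) part. For $(1)\Rightarrow(2)$, I would argue that every theorem of $\IL^-(\PP)$ is valid in all $\IL^-$-frames validating $\PP$: every axiom of $\IL^-$ is valid in all $\IL^-$-frames, the principle $\PP$ is valid precisely in those $\IL^-$-frames satisfying the frame condition of Proposition~\ref{FCP}, and validity is preserved under the inference rules Modus Ponens, Necessitation, $\R{1}$, and $\R{2}$. The implication $(2)\Rightarrow(3)$ is immediate, since every finite $\IL^-$-frame validating $\PP$ is in particular an $\IL^-$-frame validating $\PP$.

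The core of the argument is $(3)\Rightarrow(1)$, which I would establish by contraposition via a canonical-model construction. Assuming $\IL^-(\PP)\nvdash A$, I first invoke Proposition~\ref{finade} to fix a finite adequate set $\Phi$ containing $A$ (and hence ${\sim}A$). I would then build a finite $\IL^-$-model whose worlds are certain $\Phi$-maximally consistent sets, define $R$ and the family $\{S_w\}_{w\in W}$ appropriately, and verify a Truth Lemma stating that for every $B\in\Phi$ and every world $\Gamma$, we have $\Gamma\Vdash B$ if and only if $B\in\Gamma$. Since ${\sim}A$ is consistent, it extends to some $\Phi$-maximally consistent $\Gamma_0$, and the Truth Lemma then yields a world refuting $A$; by Proposition~\ref{FCP} I must also ensure the frame condition $(\forall w,x,y,z)(w R x R y\,S_w\,z \Rightarrow y\,S_x\,z)$ so that $\PP$ is valid, exhibiting a finite $\IL^-$-frame validating $\PP$ in which $A$ fails.

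The definitions of $R$ and $S_w$ are where the adequacy conditions earn their keep. I would set $\Gamma R \Delta$ to require that $\Gamma$ and $\Delta$ are distinct, that the $\Box$- and $\rhd$-content of $\Gamma$ propagates correctly into $\Delta$ (so that boxed formulas true at $\Gamma$ hold at all $R$-successors and conversely some witness to consistency of $R$-ascent is reflected), and conversely that $\Delta$ carries a strictly larger ``closed'' set of modal formulas, which secures transitivity and converse well-foundedness on the finite $W$. For $S_w$, the natural choice makes $\Delta\,S_\Gamma\,\Theta$ hold when $\Gamma R \Delta$ and the $\rhd$-demands that must be met at $\Delta$ relative to $\Gamma$ are honored at $\Theta$; conditions~(4) and~(5) of adequacy, together with the fact that $\IL^-(\PP)$ proves $\bigwedge\Omega\to\Box\bigwedge\Omega$ for sets $\Omega$ of $\rhd$-formulas, are exactly what force the persistence frame condition to hold for this choice.

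The hard part will be the $\rhd$-direction of the Truth Lemma, and in particular arranging the existence of the required $S_w$-successors. To show that $B\rhd C\notin\Gamma$ entails $\Gamma\nVdash B\rhd C$, I must produce an $R$-successor $\Delta$ of $\Gamma$ with $B\in\Delta$ such that no $S_\Gamma$-successor of $\Delta$ satisfies $C$; this is a consistency/extension argument, assembling the relevant $\Box$- and $\rhd$-formulas dictated by $\Gamma$ and the failure of $B\rhd C$, and checking that the resulting set is consistent by an appeal to the $\R{1}$, $\R{2}$, $\J{3}$ machinery and the persistence-driven closure properties. The reverse inclusion $B\rhd C\in\Gamma \Rightarrow \Gamma\Vdash B\rhd C$ then follows from the very definition of $S_\Gamma$, since every $R$-successor verifying $B$ is linked by $S_\Gamma$ to a world verifying $C$. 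Once the Truth Lemma and the frame condition are in place, the completeness direction $(3)\Rightarrow(1)$, and hence the full theorem, follows.
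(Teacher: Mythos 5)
Your overall strategy coincides with the paper's: the two soundness implications, then completeness by contraposition via a finite adequate set $\Phi$, a finite canonical model built from $\Phi$-maximally consistent sets, a Truth Lemma, and Proposition~\ref{FCP} to ensure that $\PP$ is valid on the resulting frame. However, there is a genuine gap at exactly the point where the difficulty is concentrated: you take the worlds to be the $\Phi$-maximally consistent sets themselves, with a single relation $S_\Gamma$ between them. In $\IL^-(\PP)$, which lacks $\J{2}$ and $\J{2}_+$, this cannot be made to work; the paper instead takes as worlds the \emph{pairs} $(\Gamma, B)$ with $\Gamma \in K_\Phi$ and $B \in \Phi_\rhd$, where the second component is a label recording which $\rhd$-target this copy of $\Gamma$ is responsible for refuting, and defines $S_{(\Gamma,B)}$ using the label of the intermediate world: $(\Delta, C) \mathrel{S_{(\Gamma,B)}} (\Theta, D)$ iff $(\Gamma,B) \mathrel{R} (\Delta,C)$ and ($\Gamma \prec_C \Delta$ implies ${\sim}C \in \Theta$).

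The labels are indispensable for the following reason. In the Truth Lemma, one and the same set $\Delta$ may simultaneously serve as the witness refuting several formulas $C_1 \rhd D_1,\, C_2 \rhd D_2 \notin \Gamma$ (which, on your universal reading of ``the $\rhd$-demands relative to $\Gamma$ are honored,'' forces every $S_\Gamma$-successor of $\Delta$ to contain both ${\sim}D_1$ and ${\sim}D_2$), while also containing $C'$ for some $C' \rhd D' \in \Gamma$ (which forces some $S_\Gamma$-successor of $\Delta$ to contain $D'$). These demands can be jointly unsatisfiable: take $D' = p \lor q$, $D_1 = p$, $D_2 = q$; the required successor would then have to contain $\{p \lor q, \neg p, \neg q\}$. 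The existence fact available in this weak logic (Fact~\ref{MClem3}, from~\cite{KO21}) produces a $\Theta$ containing $D'$ and ${\sim}E$ for a \emph{single} $E$ only, and without $\J{2}_+$ one cannot strengthen it to handle several refutation demands at once (in $\IL$ one would decompose $C' \rhd (p \lor q)$ using $\J{2}_+$, but that axiom is unavailable here). The pair construction dissolves the conflict, because each copy $(\Delta, D_i)$ carries only its own refutation demand, and the conditional form of the $S$-definition makes all other demands vacuous. Your remarks about the persistence frame condition are correct in spirit --- the paper's Lemma~\ref{MClem1}, which uses adequacy condition 4 and the axiom $\PP$ exactly as you indicate, is what verifies the condition of Proposition~\ref{FCP} --- but as stated your construction does not support the Truth Lemma, and the missing labelling device is the heart of the proof.
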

\begin{proof}
The implications $(1 \Rightarrow 2)$ and $(2 \Rightarrow 3)$ are straightforward. 
We prove $(3 \Rightarrow 1)$. 
Suppose $\IL^-(\PP) \nvdash A$. 
We show that there exist a finite $\IL^- \!$-model $(W, R, \{S_{w}\}_{w \in W}, \Vdash)$ and a $w \in W$ such that $w \nVdash A$. 
There exists a finite adequate set $\Phi$ including $\{{\sim}A\}$ by Proposition~\ref{finade}. 
Let
\[
	K_{\Phi} := \{\Gamma \subseteq \Phi \mid \Gamma \ \text{is a} \ \Phi\text{-maximal consistent set}\}.
\]
Then, $K_{\Phi}$ is also a finite set. 

We define the binary relations $\prec$ and $\prec_C$ for each $C \in \Phi_\rhd$ on $W$ as follows: 
For $\Gamma, \Delta \in K_{\Phi}$, 
\begin{enumerate}
	\item $\Gamma \prec \Delta :\Longleftrightarrow$ $B, \Box B \in \Delta$ for any $\Box B \in \Gamma$, and there exists a $\Box C \in \Delta \setminus \Gamma$. 
	\item $\Gamma \prec_{C} \Delta :\Longleftrightarrow$ $\Gamma \prec \Delta$ and ${\sim}B \in \Delta$ for any $B \rhd C \in \Gamma$. 
\end{enumerate}

Notice that $\prec$ is transitive and irreflexive. 
Also, if $\Gamma \prec \Delta$, then $\Gamma \prec_{\bot} \Delta$ by Conditions 2 and 5 in Definition~\ref{Def:ade}. 

\begin{lem}\label{MClem1}
Let $\Gamma, \Delta, \Theta \in K_{\Phi}$ and $C \in \Phi_{\rhd}$. 
If $\Gamma \prec \Delta \prec_{C} \Theta$, then $\Gamma \prec_{C} \Theta$. 
\end{lem}

\begin{proof}
Suppose $\Gamma \prec \Delta \prec_{C} \Theta$. 
We have $\Gamma \prec \Theta$ by the transitivity of $\prec$. 
Let $B \rhd C \in \Gamma$. 
Since $\Box(B \rhd C) \in \Phi$ and $\IL^-(\PP) \vdash \bigwedge \Gamma \to \Box(B \rhd C)$, we have $\Box(B \rhd C) \in \Gamma$. 
Hence, $B \rhd C \in \Delta$ by $\Gamma \prec \Delta$. 
Therefore, ${\sim}B \in \Theta$ by $\Delta \prec_{C} \Theta$, and we conclude $\Gamma \prec_{C} \Theta$. 
\end{proof}

The following two facts were proved in~\cite{KO21}. 

\begin{fact}[{\cite[Lemma 4.6]{KO21}}]\label{MClem2}
Let $\Gamma \in K_{\Phi}$ and $C, D \in \Phi_{\rhd}$. 
If $C \rhd D \notin \Gamma$, then there exists a $\Delta \in K_{\Phi}$ such that $C \in \Delta$ and $\Gamma \prec_{D} \Delta$. 
\end{fact}

Notice that Condition 6 in Definition~\ref{Def:ade} is used in proving Fact~\ref{MClem2}. 

\begin{fact}[{\cite[Lemma 4.7]{KO21}}]\label{MClem3}
Let $\Gamma, \Delta \in K_{\Phi}$ and $C, D, E \in \Phi_{\rhd}$. 
If $C \rhd D \in \Gamma$, $\Gamma \prec_{E} \Delta$ and $C \in \Delta$, then there exists a $\Theta \in K_{\Phi}$ such that $D, {\sim}E \in \Theta$. 
\end{fact}

Let $(W, R, \{S_{w}\}_{w \in W}, \Vdash)$ be a quadruple defined as follows:
\begin{enumerate}
	\item $W :=\{(\Gamma, B) \mid \Gamma \in K_{\Phi}$ and $B \in \Phi_{\rhd} \}$;
	\item $(\Gamma, B) \mathrel{R} (\Delta, C) : \iff \Gamma \prec \Delta$;
	\item $(\Delta, C) \mathrel{S_{(\Gamma, B)}}  (\Theta, D) : \iff (\Gamma, B) \mathrel{R} (\Delta, C)$ and if $\Gamma \prec_{C} \Delta$, then ${\sim} C \in \Theta$; 
	\item $(\Gamma, B) \Vdash p : \iff p \in \Gamma$. 
\end{enumerate}

Let $\Gamma_{0}$ be a $\Phi$-maximally consistent set with ${\sim}A \in \Gamma_{0}$. 
Then, $W$ is a non-empty set because $(\Gamma_{0}, \bot) \in W$. 
Also, since $\Phi$ is finite, so is $W$. 
Obviously, $R$ is a transitive and irreflexive binary relation on $W$. 
Since $W$ is finite, $R$ is conversely well-founded. 
Trivially, $S_{(\Gamma, B)} \subseteq R[(\Gamma, B)] \times W$ for each $(\Gamma, B) \in W$. 
Therefore, $(W, R, \{S_{w}\}_{w \in W}, \Vdash)$ is a finite $\IL^-\!$-model. 

We show that $\PP$ is valid in $(W, R, \{S_{w}\}_{w \in W})$. 
Suppose $(\Gamma, B) \mathrel{R} (\Delta, C) \mathrel{R} (\Theta, D) \mathrel{S_{(\Gamma, B)}} (\Lambda, E)$. 
Suppose $\Delta \prec_{D} \Theta$. 
Since $\Gamma \prec \Delta \prec_{D} \Theta$, by Lemma~\ref{MClem1}, $\Gamma \prec_{D} \Theta$. 
By $(\Theta, D) \mathrel{S_{(\Gamma, B)}} (\Lambda, E)$, we have ${\sim} D \in \Lambda$. 
Therefore, we conclude $(\Theta, D) \mathrel{S_{(\Delta, C)}} (\Lambda, E)$. 
Then, $\PP$ is valid in $(W, R, \{S_{w}\}_{w \in W})$ by Proposition~\ref{FCP}. 

\begin{cl}
Let $A' \in \Phi$ and $(\Gamma, B) \in W$. 
Then, 
\[
(\Gamma, B) \Vdash A' \iff A' \in \Gamma. 
\]
\end{cl}
\begin{proof}
We prove the claim by induction on the construction of $A'$. 
We only prove the case that $A'$ is of the form $C \rhd D$. 

($\Rightarrow$): 
Suppose $C \rhd D \notin \Gamma$. 
By Fact~\ref{MClem2}, there exists a $\Delta \in K_{\Phi}$ such that $C \in \Delta$ and $\Gamma \prec_{D} \Delta$. 
Then, $(\Delta, D) \in W$ and $(\Gamma, B) \mathrel{R} (\Delta, D)$. 
Also, $(\Delta, D) \Vdash C$ by the induction hypothesis. 
Let $(\Theta, E) \in W$ be such that $(\Delta, D) \mathrel{S_{(\Gamma, B)}} (\Theta, E)$. 
Then, ${\sim}D \in \Theta$. 
By the induction hypothesis, we have $(\Theta, E) \nVdash D$. 
Therefore, we conclude $(\Gamma, B) \nVdash C \rhd D$. 

($\Leftarrow$): 
Suppose $C \rhd D \in \Gamma$ and let $(\Delta, E) \in W$ be such that $(\Gamma, B) \mathrel{R} (\Delta, E)$ and $(\Delta, E) \Vdash C$. 
By the induction hypothesis, $C \in \Delta$. 
We distinguish the following two cases. 
\begin{enumerate}
	\item Suppose $\Gamma \prec_{E} \Delta$. 
By Fact~\ref{MClem3}, there exists a $\Theta \in K_{\Phi}$ such that $D, {\sim}E \in \Theta$. 
Then, $(\Theta, E) \in W$ and $(\Delta, E) \mathrel{S_{(\Gamma, B)}} (\Theta, E)$. 
Also, $(\Theta, E) \Vdash D$ by the induction hypothesis. 
Therefore, we conclude $(\Gamma, B) \Vdash C \rhd D$. 

	\item Suppose $\Gamma \nprec_{E} \Delta$. 
Since $\Gamma \prec_{\bot} \Delta$, there exists a $\Theta \in K_{\Phi}$ such that $D \in \Theta$ by Fact~\ref{MClem3}. 
The rest of the proof is completely the same as in the first case. \qedhere
\end{enumerate}
\end{proof}
Since $A \notin \Gamma_0$, we conclude $(\Gamma_{0}, \bot) \nVdash A$ by the claim. 
This finishes our proof of Theorem~\ref{MCILP}. 
\end{proof}

Next, we prove the modal completeness for $\IL^-(\PP)$ with respect to simplified $\IL^-(\PP)$-frames.
Moreover, we consider the following condition $(\dagger)$ on simplified $\IL^-(\PP)$-frames $(W, R, S)$: 
\[
	\text{There exist no}\ x, y, z \in W\ \text{such that}\ x S y S z. \hspace{1in} (\dagger)
\] 

\begin{thm}\label{SVC}
For any $\mathcal{L}(\rhd)$-formula $A$, the following are equivalent: 
\begin{enumerate}
	\item $\IL^-(\PP) \vdash A$. 
	\item $A$ is valid in all simplified $\IL^-(\PP)$-frames. 
	\item $A$ is valid in all finite simplified $\IL^-(\PP)$-frames satisfying the condition \textup{($\dagger$)}. 
\end{enumerate}
\end{thm}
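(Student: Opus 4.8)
The plan is to establish Theorem~\ref{SVC} by proving the cycle of implications $(1 \Rightarrow 2 \Rightarrow 3 \Rightarrow 1)$. The implication $(1 \Rightarrow 2)$ follows from the soundness of $\IL^-(\PP)$ with respect to simplified $\IL^-(\PP)$-frames: the axioms and rules of $\IL^-$ are easily checked to be valid, and the critical axiom $\PP$ is valid by Proposition~\ref{Psou}. The implication $(2 \Rightarrow 3)$ is trivial, since the frames in (3) form a subclass of those in (2). Thus the entire content of the theorem lies in the completeness direction $(3 \Rightarrow 1)$, which I would prove by contraposition via a canonical model construction.

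For $(3 \Rightarrow 1)$, suppose $\IL^-(\PP) \nvdash A$. Following the strategy of Theorem~\ref{MCILP}, I would first fix a finite adequate set $\Phi$ containing ${\sim}A$ (by Proposition~\ref{finade}) and use the relations $\prec$ and $\prec_C$ on $\Phi$-maximally consistent sets already defined in that proof, together with Facts~\ref{MClem2} and~\ref{MClem3} and Lemma~\ref{MClem1}. The new work is to collapse the family $\{S_w\}_{w \in W}$ into a single relation $S$ while preserving satisfaction. The natural definition is to take worlds of the form $(\Gamma, C)$ as before, declare $(\Gamma, B) \mathrel{R} (\Delta, C) :\iff \Gamma \prec \Delta$, and set $(\Delta, C) \mathrel{S} (\Theta, D) :\iff {\sim} C \in \Theta$. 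The Truth Lemma for the $\rhd$ case would then be verified: the forward direction uses Fact~\ref{MClem2} to produce a witness $(\Delta, D)$ with $C \in \Delta$ all of whose $S$-successors $(\Theta, E)$ satisfy ${\sim}D \in \Theta$, hence falsify $D$; the backward direction uses Fact~\ref{MClem3} to supply, for each $R$-successor $(\Delta, E)$ forcing $C$, an $S$-successor forcing $D$.

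The main obstacle, and the reason the single-relation simplification is delicate, is ensuring the condition $(\dagger)$ that $S$ has no composable pairs $x \mathrel{S} y \mathrel{S} z$. Because satisfaction clause (a) for simplified frames only requires the $S$-successor $y$ to force $B$ without demanding $w \mathrel{R} y$, the relation $S$ need not respect the modal height of worlds, and a naive definition could easily admit $S$-chains of length two. The key step will be to design the second coordinate of each world carefully so that $S$-successors are confined to a ``terminal layer'' that has no further $S$-successors — for instance, by distinguishing worlds that serve as $S$-targets from those that serve as $R$-successors, perhaps by augmenting the carrier with an extra copy or flag. I would verify that with this stratification the witnesses produced by Facts~\ref{MClem2} and~\ref{MClem3} still land in the correct layer so that the Truth Lemma goes through, while simultaneously $(\dagger)$ holds by construction. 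Once the finite model falsifying $A$ at $(\Gamma_0, \bot)$ is obtained and $(\dagger)$ is confirmed, the contrapositive of $(3 \Rightarrow 1)$ is complete, closing the cycle.
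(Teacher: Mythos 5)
Your overall decomposition ($1 \Rightarrow 2 \Rightarrow 3 \Rightarrow 1$, with Proposition~\ref{Psou} giving soundness) matches the paper, but your construction for $(3 \Rightarrow 1)$ has a genuine gap: the definition $(\Delta, C) \mathrel{S} (\Theta, D) :\iff {\sim}C \in \Theta$ breaks the backward direction of the Truth Lemma. Concretely, suppose $C'' \rhd C \in \Gamma$, $\Gamma \prec \Delta$, and $C'' \in \Delta$; this configuration certainly occurs (take $p \rhd q \in \Gamma$ with $p \rhd \bot \notin \Gamma$, which is consistent, and apply Fact~\ref{MClem2} with the pair $(p, \bot)$ to get $\Delta \ni p$ with $\Gamma \prec \Delta$). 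Then $(\Delta, C)$ is an $R$-successor of $(\Gamma, B)$ forcing $C''$, but \emph{every} $S$-successor $(\Theta, F)$ of $(\Delta, C)$ has ${\sim}C \in \Theta$ and hence refutes $C$; so $(\Gamma, B) \nVdash C'' \rhd C$ although $C'' \rhd C \in \Gamma$. Fact~\ref{MClem3} cannot rescue you here: its hypothesis $\Gamma \prec_{E} \Delta$ (with $E = C$) fails in exactly this configuration, because $C'' \rhd C \in \Gamma$ and $C'' \in \Delta$ rather than ${\sim}C'' \in \Delta$. The paper's canonical model avoids this precisely because its relation $S_{(\Gamma, B)}$ is \emph{conditional}: ${\sim}C \in \Theta$ is demanded only when $\Gamma \prec_C \Delta$, so that when $\Gamma \nprec_C \Delta$ every world counts as an $S_{(\Gamma, B)}$-successor. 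That escape clause inherently depends on the evaluation world $(\Gamma, B)$, which is exactly the information a single relation $S$ on the pairs $(\Gamma, C)$ cannot record.

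This dependence is why the paper does not collapse the family inside the canonical model at all. Instead it proves completeness with respect to finite $\IL^-\!$-frames validating $\PP$ first (Theorem~\ref{MCILP}), and then, in a separate step, unravels such a model: the new worlds are $R$-paths $\seq{w_1, \ldots, w_n}$, with $\seq{x_1, \ldots, x_n} \mathrel{S'} \seq{y}$ iff $n > 1$ and $x_n \mathrel{S_{x_{n-1}}} y$. The recorded immediate predecessor $x_{n-1}$ supplies exactly the information your second coordinate cannot; the frame condition for $\PP$ (Proposition~\ref{FCP}) guarantees that consulting only the immediate predecessor suffices in the Truth Lemma (if $w_n \mathrel{R} x_{m-1}$, then $x_m \mathrel{S_{w_n}} y$ implies $x_m \mathrel{S_{x_{m-1}}} y$); and $(\dagger)$ is automatic since $S'$-targets are paths of length $1$, which have no $S'$-successors. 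Your ``terminal layer / extra flag'' remark gestures toward this stratification, but as it stands it is a placeholder: you neither specify the construction nor verify the Truth Lemma for it, and the version of $S$ you did specify is refuted above. To repair the proof, either follow the paper's two-step route, or build the stratified model explicitly with worlds carrying their $R$-history, which amounts to the same thing.
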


\begin{proof}
$(1 \Rightarrow 2)$: This is immediate from Proposition~\ref{Psou}. 

$(2 \Rightarrow 3)$: Trivial. 

$(3 \Rightarrow 1)$: 
Suppose $\IL^-(\PP) \nvdash A$. 
Then, by Theorem~\ref{MCILP}, there exist a finite $\IL^- \!$-model $(W, R, \{S_{w}\}_{w \in W}, \Vdash)$ and $w_0 \in W$ such that $\PP$ is valid in $(W, R, \{S_{w}\})$ and $w_0 \nVdash A$. 
We would like to find some finite simplified $\IL^-(\PP)$-model $(W', R', S', \Vdash')$ and $w' \in W'$ such that $w' \nVdash' A$. 
Let $(W', R', S', \Vdash')$ be a quadruple satisfying the following conditions:
\begin{enumerate}
	\item $W' :=\{\seq{w_1, \ldots, w_n} \mid n \geq 1, (\forall i \leq n)(w_{i} \in W), \, \, \text{and} \, \, (\forall i < n)(w_{i} {R} w_{i+1}) \}$;
	\item $\seq{x_1, \ldots, x_n} \mathrel{R'} \seq{y_1, \ldots, y_m} :\iff n < m$ and $(\forall i \leq n)(x_i = y_i)$;
	\item $\seq{x_1, \ldots, x_n} \mathrel{S'} \seq{y_1, \ldots, y_m} :\iff n > 1$, $m=1$, and $x_{n} {S_{x_{n-1}}} y_{m}$; 
	\item $\seq{w_1, \ldots, w_n} \Vdash' p :\iff w_n \Vdash p$. 
\end{enumerate}

$W'$ is a non-empty set because $\seq{w_0} \in W'$. 
Furthermore, since $W$ is finite and $R$ is a conversely well-founded binary relation, $W'$ is also finite. 
Therefore, $(W', R', S', \Vdash')$ is a finite simplified $\IL^-(\PP)$-model. 
For any elements $\seq{x_1, \ldots, x_n}$, $\seq{y_1, \ldots, y_m}$, $\seq{z_1, \ldots, z_k}$ of $W'$, if $\seq{x_1, \ldots, x_n} \mathrel{S'} \seq{y_1, \ldots, y_m}$, then $m = 1$, and hence $\seq{y_1, \ldots, y_m}\mathrel{S'} \seq{z_1, \ldots, z_k}$ does not hold. 
This means that the frame $(W', R', S')$ satisfies the condition $(\dagger)$. 

\begin{cl}
Let $A'$ be any $\mathcal{L}(\rhd)$-formula and let $\seq{w_1, \ldots, w_n} \in W'$. 
Then, 
\[
\seq{w_1, \ldots, w_n} \Vdash' A' \iff w_n \Vdash A'. 
\]
\end{cl}
\begin{proof}
We prove the claim by induction on the construction of $A'$. 
We only give a proof of the case that $A'$ is of the form $B \rhd C$. 

($\Rightarrow$): 
Suppose $\seq{w_1, \ldots, w_n} \Vdash' B \rhd C$. 
Let $x \in W$ be such that $w_{n} {R} x$ and $x \Vdash B$. 
Then, we have $\seq{w_1, \ldots, w_n, x} \in W'$ and $\seq{w_1, \ldots, w_n} \mathrel{R'} \seq{w_1, \ldots, w_n, x}$. 
By the induction hypothesis, $\seq{w_1, \ldots, w_n, x} \Vdash B$. 
By our supposition, there exists a $\seq{y} \in W'$ such that $\seq{w_1, \ldots, w_n, x} \mathrel{S'} \seq{y} \Vdash' C$. 
By the definition of $S'$ and the induction hypothesis, we have $x {S_{w_{n}}} y$ and $y \Vdash C$. 
Hence, $w_n \Vdash B \rhd C$. 

($\Leftarrow$): Suppose $w_n \Vdash B \rhd C$. 
Let $\seq{x_1,\ldots, x_{m}} \in W'$ be such that $\seq{w_1, \ldots, w_n} \mathrel{R'} \seq{x_1,\ldots, x_{m}}$ and $\seq{x_1,\ldots, x_{m}} \Vdash' B$.
By the definition of $R'$ and the induction hypothesis, we have $n < m$, $w_{n} {R} x_{m}$ and $x_m \Vdash B$. 
By our supposition, there exists a $y \in W$ such that $x_{m} {S_{w_{n}}} y$ and $y \Vdash C$. 
Since $n < m$, either $n = m-1$ or $n < m-1$. 
Therefore, we have either $w_{n} = x_{m-1}$ or $w_{n} {R} x_{m-1}$ by the definition of $R'$. 
If $w_{n} = x_{m-1}$, then $x_{m} {S_{x_{m-1}}} y$ is obvious. 
If $w_{n} {R} x_{m-1}$, then $x_{m} {S_{x_{m-1}}} y$ holds by Proposition~\ref{FCP} because $\PP$ is valid in $(W, R, \{S_{w}\}_{w \in W})$. 
In either case, we obtain $x_{m} {S_{x_{m-1}}} y$. 
Then, $\seq{x_1,\ldots, x_{m}} \mathrel{S'} \seq{y}$ because $m > n \geq 1$. 
Also, we have $\seq{y} \Vdash' C$ by the induction hypothesis. 
Therefore, we conclude $\seq{w_1, \ldots, w_n} \Vdash' B \rhd C$. 
\end{proof}

Since $w_{0} \nVdash A$, we obtain $\seq{w_{0}} \nVdash' A$ by the claim.  
\end{proof}

\subsection{An embedding of the weak interpretability logic with persistence into bimodal logics}

In this subsection, as an application of Theorem~\ref{SVC}, we show that $\IL^-(\PP)$ is faithfully embedded into several bimodal logics. 
The language $\mathcal{L}_2$ of bimodal propositional logic is the language of propositional logic equipped with two unary modal operators $[0]$ and $[1]$. 
The logic $\GLK$ in the language $\mathcal{L}_2$ has the following axioms and rules: 
\begin{itemize}
	\item All tautologies in the language $\mathcal{L}_2$; 
	\item $[0] (A \to B) \to ([0] A \to [0] B)$; 
	\item $[0] ([0] A \to A) \to [0] A$; 
	\item $[1] (A \to B) \to ([1] A \to [1] B)$; 
	\item $\dfrac{A \to B \quad A}{B}$; 
	\item $\dfrac{A}{[k] A}$ for $k \in \{0, 1\}$. 
\end{itemize}
The logic $\GLK$ is called the \textit{fusion} of $\GL$ and $\K$. 
Let $(\GLK) \oplus [1][1]\bot$ be the logic obtained from $\GLK$ by adding $[1][1]\bot$ as an axiom. 

We say that a triple $(W, R_0, R_1)$ is a \textit{$\GLK$-frame} if $W$ is a nonempty set, $R_0$ and $R_1$ are binary relations on $W$, and $R_0$ is transitive and conversely well-founded. 
Here for each $k \in \{0, 1\}$, the binary relation $R_k$ corresponds to the modal operator $[k]$. 
From a general result about fusions of modal logics, it is known that $\GLK$ is characterized by the class of all finite $\GLK$-frames (cf.~Kurucz~\cite[Theorem 3]{Kuru07}). 

We introduce a translation $\chi$ from $\mathcal{L}(\rhd)$-formulas into $\mathcal{L}_2$-formulas defined as follows: 
\begin{enumerate}
	\item $\chi(\bot)$ is $\bot$; 
	\item $\chi(p)$ is $p$ for each propositional variable $p$; 
	\item $\chi(\neg A)$ is $\neg \chi(A)$; 
	\item $\chi(A \circ B)$ is $\chi(A) \circ \chi(B)$ for $\circ \in \{\land, \lor, \to\}$; 
	\item $\chi(\Box A)$ is $[0] \chi(A)$; 
	\item $\chi(A \rhd B)$ is $[0]\bigl(\chi(A) \to \langle 1 \rangle \chi(B)\bigr)$. 
\end{enumerate}
Here $\langle 1 \rangle$ is the abbreviation for $\neg [1] \neg$. 
Then, we prove the following embedding result. 

\begin{prop}\label{Embed}
For any modal formula $A$, the following are equivalent: 
\begin{enumerate}
	\item $\IL^-(\PP) \vdash A$. 
	\item $\GLK \vdash \chi(A)$. 
	\item $(\GLK) \oplus [1][1]\bot \vdash \chi(A)$. 
\end{enumerate}
\end{prop}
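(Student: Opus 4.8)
The statement is an equivalence of three conditions, so the plan is to establish the cycle $(1 \Rightarrow 2) \Rightarrow (3) \Rightarrow (1)$, using the fact that $(\GLK) \oplus [1][1]\bot$ is an extension of $\GLK$, which makes $(2 \Rightarrow 3)$ immediate. The genuinely new work lives in $(1 \Rightarrow 2)$ (soundness of the translation) and $(3 \Rightarrow 1)$ (faithfulness, i.e.\ the embedding reflects non-theoremhood). For both directions I would exploit the semantic characterizations already in hand: Theorem~\ref{SVC} gives completeness of $\IL^-(\PP)$ with respect to finite simplified $\IL^-(\PP)$-frames satisfying $(\dagger)$, and $\GLK$ (and its extension) is characterized by finite $\GLK$-frames. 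The translation $\chi$ is designed precisely so that a simplified $\IL^-(\PP)$-frame $(W,R,S)$ corresponds to a $\GLK$-frame $(W, R_0, R_1)$ by setting $R_0 := R$ and $R_1 := S$; the clause $\chi(A \rhd B) = [0]\bigl(\chi(A) \to \langle 1\rangle \chi(B)\bigr)$ unwinds to exactly the satisfaction clause (a) for $\rhd$.

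\textbf{Soundness $(1 \Rightarrow 2)$.} I would argue semantically rather than by tracking a Hilbert proof. Given a $\GLK$-model $\mathcal{M} = (W, R_0, R_1, \Vdash)$, form the simplified $\IL^-(\PP)$-model with $R := R_0$ and $S := R_1$ (this is a legitimate simplified frame since $R_0$ is transitive and conversely well-founded and $S$ is an arbitrary relation). A routine induction on the structure of $A$ then shows that $w \Vdash \chi(A)$ in $\mathcal{M}$ iff $w$ satisfies $A$ in the associated simplified model; the only nontrivial clause is $\rhd$, where the definition of $\langle 1 \rangle$ matches clause (a) verbatim. Since by Proposition~\ref{Psou} every $\IL^-(\PP)$-theorem is valid in all simplified $\IL^-(\PP)$-models, $A$ is valid there, hence $\chi(A)$ is valid in all $\GLK$-models, and completeness of $\GLK$ for finite $\GLK$-frames yields $\GLK \vdash \chi(A)$.

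\textbf{Faithfulness $(3 \Rightarrow 1)$.} Contrapositively, assume $\IL^-(\PP) \nvdash A$. By Theorem~\ref{SVC}.3 there is a finite simplified $\IL^-(\PP)$-model $(W, R, S, \Vdash)$ satisfying $(\dagger)$ and a point $w_0$ with $w_0 \nVdash A$. I would set $R_0 := R$ and $R_1 := S$ to obtain a finite $\GLK$-frame (transitivity and converse well-foundedness of $R_0$ are inherited), and by the same structural induction as above, $w_0 \nVdash \chi(A)$ in the resulting bimodal model. The key extra point is that condition $(\dagger)$ — there is no $S$-chain $x S y S z$ — forces $R_1$ to validate $[1][1]\bot$: since no point is reachable by two consecutive $R_1$-steps, $\langle 1 \rangle \langle 1 \rangle \top$ fails everywhere, i.e.\ $[1][1]\bot$ holds at every world. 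Hence the model is in fact a model of $(\GLK) \oplus [1][1]\bot$, so $(\GLK) \oplus [1][1]\bot \nvdash \chi(A)$, completing the contrapositive.

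\textbf{Main obstacle.} The only delicate ingredient is recognizing that $(\dagger)$ is exactly what is needed to land inside the stronger logic $(\GLK) \oplus [1][1]\bot$ rather than merely $\GLK$; this is what lets the chain of implications close up into a genuine equivalence of all three conditions instead of just $(1 \Leftrightarrow 2)$. The structural-induction lemma relating $\chi$-satisfaction to $\rhd$-satisfaction is conceptually straightforward, so I expect the bookkeeping in the $\rhd$ clause — confirming that $\langle 1 \rangle \chi(B)$ and clause (a) agree under the identification $S = R_1$ — to be the one place where care is required, though no real difficulty arises.
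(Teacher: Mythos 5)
Your proof is correct, but the soundness direction $(1 \Rightarrow 2)$ takes a genuinely different route from the paper's. The paper proves $(1 \Rightarrow 2)$ syntactically, by induction on the length of proofs in $\IL^-(\PP)$: it verifies that $\chi$ sends each axiom ($\G{1}$--$\G{3}$, $\J{3}$, $\J{6}$, $\PP$) to a theorem of $\GLK$ and that Modus Ponens, Necessitation, $\R{1}$, and $\R{2}$ are preserved under the translation. You instead argue semantically: a finite $\GLK$-model is literally a simplified $\IL^-(\PP)$-model under the identification $R := R_0$, $S := R_1$, the structural-induction lemma transfers truth of $A$ to truth of $\chi(A)$, soundness of $\IL^-(\PP)$ over simplified frames gives validity of $A$, and Kurucz's completeness of $\GLK$ with respect to finite $\GLK$-frames closes the argument. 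Both are valid; the trade-off is that your route reuses the same truth-transfer lemma in both nontrivial directions and avoids axiom-by-axiom bookkeeping, but it leans on the external completeness theorem for $\GLK$ --- a result the paper quotes in its preamble but never actually uses in its own proof of this proposition --- whereas the paper's syntactic induction keeps $(1 \Rightarrow 2)$ self-contained. Your direction $(3 \Rightarrow 1)$ coincides with the paper's argument, including the key observation that condition $(\dagger)$ forces validity of $[1][1]\bot$ in the resulting frame. One small citation slip: Proposition~\ref{Psou} by itself only gives validity of the axiom $\PP$ in simplified frames; the fact that \emph{every} theorem of $\IL^-(\PP)$ is valid there is the $(1 \Rightarrow 2)$ direction of Theorem~\ref{SVC} (the paper treats the validity of the remaining axioms and the admissibility of the rules as immediate), so that is what you should invoke at that step.
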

\begin{proof}
$(1 \Rightarrow 2)$: 
We prove the implication by induction on the length of proofs in $\IL^-(\PP)$. 
\begin{itemize}
	\item If $A$ is one of $\G{1}$, $\G{2}$, and $\G{3}$, then $\chi(A)$ is also an axiom of $\GLK$. 
	\item ($\J{3}$): Since 
\begin{align*}
	\bigl(\chi(A) \to \langle 1 \rangle \chi(C)\bigr) \land \bigl(\chi(B) \to \langle 1 \rangle \chi(C)\bigr) \to \bigl(\chi(A \lor B) \to \langle 1 \rangle \chi(C)\bigr)
\end{align*}
is a tautology, we have 
\[
	\GLK \vdash \chi(A \rhd C) \land \chi(B \rhd C) \to \chi((A \lor B) \rhd C). 
\]
	\item ($\J{6}$): Notice that $\chi((\neg A) \rhd \bot)$ is $[0] (\neg \chi(A) \to \langle 1 \rangle \bot)$. 
	Since $\GLK \vdash \neg \langle 1 \rangle \bot$, $\GLK \vdash \chi((\neg A) \rhd \bot) \leftrightarrow \chi(\Box A)$. 
	\item ($\PP$): Since $\chi(A \rhd B)$ is of the form $[0] C$ for some $C$, $\GLK \vdash \chi(A \rhd B) \to \chi(\Box (A \rhd B))$. 

	\item If $A$ is derived by using Modus Ponens or Necessitation, then $\GLK \vdash \chi(A)$ is obvious by using the induction hypothesis. 

	\item If $A$ is derived by using the rule $\R{1}$ from $B \to C$, then $A$ is of the form $D \rhd B \to D \rhd C$ for some $D$. 
By the induction hypothesis, $\GLK \vdash \chi(B) \to \chi(C)$. 
Then, $\GLK \vdash \langle 1 \rangle \chi(B) \to \langle 1 \rangle \chi(C)$, and hence
\[
	\GLK \vdash \bigl(\chi(D) \to \langle 1 \rangle \chi(B)\bigr) \to \bigl(\chi(D) \to \langle 1 \rangle \chi(C)\bigr). 
\]
Thus, $\GLK \vdash \chi(D \rhd B) \to \chi(D \rhd C)$. 

	\item If $A$ is derived from $B \to C$ by using the rule $\R{2}$, then $A$ is of the form $C \rhd D \to B \rhd D$. 
By the induction hypothesis, $\GLK \vdash \chi(B) \to \chi(C)$, and then
\[
	\GLK \vdash \bigl(\chi(C) \to \langle 1 \rangle \chi(D)\bigr) \to \bigl(\chi(B) \to \langle 1 \rangle \chi(D)\bigr). 
\]
We obtain $\GLK \vdash \chi(C \rhd D) \to \chi(B \rhd D)$. 
\end{itemize}

$(2 \Rightarrow 3)$: Obvious. 

$(3 \Rightarrow 1)$: Suppose $\IL^-(\PP) \nvdash A$. 
By Theorem~\ref{SVC}, there exists a finite simplified $\IL^-(\PP)$-model $(W, R, S, \Vdash)$ satisfying the condition ($\dagger$) and an element $w \in W$ such that $w \nVdash A$. 
Then, $(W, R, S)$ is also a $\GLK$-frame. 
Let $\Vdash^*$ be a satisfaction relation on the $\GLK$-frame $(W, R, S)$ satisfying $x \Vdash^* p \iff x \Vdash p$ for any $x \in W$ and propositional variable $p$. 
Then, it is shown by induction on the construction of $B$ that for any $\mathcal{L}(\rhd)$-formula $B$ and $x \in W$, $x \Vdash^* \chi(B)$ if and only if $x \Vdash B$. 
Hence, we obtain $w \nVdash^* \chi(A)$. 
Also, by the condition $(\dagger)$, $[1][1]\bot$ is valid in the $\GLK$-frame $(W, R, S)$. 
Therefore, every theorem of $(\GLK) \oplus [1][1]\bot$ is valid in the frame. 
Hence, we conclude that $(\GLK) \oplus [1][1] \bot \nvdash \chi(A)$. 
\end{proof}

From this embedding result, $\IL^-(\PP)$ is faithfully embeddable into bimodal logics $L$ between the logics $\GLK$ and $(\GLK) \oplus [1][1]\bot$. 
Prominent examples of such logics $L$ are $\GL \otimes \mathbf{K4} = (\GLK) \oplus ([1]A \to [1][1]A)$ and $\FGL = (\GLK) \oplus ([1]([1]A \to A) \to [1]A)$. 

Finally, we prove the failure of FPP for the logic $(\GLK) \oplus [1][1] \bot$. 

\begin{prop}\label{FailureofFPP}
For any $\mathcal{L}_2$-formula $A$, 
\[
	(\GLK) \oplus [1][1] \bot \nvdash A \leftrightarrow [0] \neg [1] A.
\]
Consequently, the logic $(\GLK) \oplus [1][1] \bot$ does not enjoy FPP. 
\end{prop}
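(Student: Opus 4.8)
The plan is to refute the equivalence semantically, relying only on the soundness half of Proposition~\ref{Embed}: every theorem of $(\GLK) \oplus [1][1]\bot$ is valid in every $\GLK$-frame in which $[1][1]\bot$ holds, and $[1][1]\bot$ is valid in any frame satisfying the condition $(\dagger)$ for $R_1$ (as already noted in the proof of Proposition~\ref{Embed}). The idea is to exhibit a single small frame on which $A \leftrightarrow [0]\neg[1]A$ fails at one fixed world, uniformly in $A$ and in the valuation.

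Concretely, I would take the two-element frame $(W, R_0, R_1)$ with $W = \{0, 1\}$, $R_0 = \{(0,1)\}$, and $R_1 = \{(1,0)\}$. Here $R_0$ is transitive and conversely well-founded, $R_1$ is an arbitrary binary relation, and there is no $R_1$-chain of length two, since the only $R_1$-edge leaves $1$ and arrives at $0$, which has no $R_1$-successor. Thus $(\dagger)$ holds, $[1][1]\bot$ is valid, and therefore every theorem of $(\GLK) \oplus [1][1]\bot$ is valid in this frame.

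The key computation, which I expect to be entirely frame-forced and hence independent of the valuation and of the formula $A$, concerns the behaviour at world $0$. Since the only $R_0$-successor of $0$ is $1$, we have $0 \Vdash [0]\neg[1]A$ iff $1 \Vdash \neg[1]A$; and since the only $R_1$-successor of $1$ is $0$, we have $1 \Vdash \neg[1]A$ iff $0 \nVdash A$. Chaining these gives $0 \Vdash [0]\neg[1]A$ iff $0 \nVdash A$, for every $\mathcal{L}_2$-formula $A$ and every satisfaction relation on the frame. Consequently $0 \Vdash A \leftrightarrow [0]\neg[1]A$ would force $0 \Vdash A$ iff $0 \nVdash A$, which is impossible, so the biconditional fails at $0$. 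Hence $A \leftrightarrow [0]\neg[1]A$ is not valid in the frame, and by soundness it is not provable in $(\GLK) \oplus [1][1]\bot$; as this argument does not depend on $A$, the first assertion follows for arbitrary $A$.

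For the consequence, I would apply this to the fixed-point equation of $C(p) := [0]\neg[1]p$, in which $p$ occurs only inside the scope of $[1]$ and $[0]$ and is therefore modalized. A fixed point in the sense of FPP would be a formula $F$ with $\var(F) \subseteq \var(C(p)) \setminus \{p\} = \varnothing$ and $(\GLK) \oplus [1][1]\bot \vdash F \leftrightarrow [0]\neg[1]F$; instantiating the first assertion with $A := F$ shows that no such $F$ exists, so FPP fails. The only point requiring care is verifying that the crucial equivalence $0 \Vdash [0]\neg[1]A \iff 0 \nVdash A$ is genuinely forced by the frame rather than by a particular valuation, since it is exactly this uniformity that lets one frame handle every $A$ at once; the design choice of letting $R_1$ point from the top world $1$ back to the root $0$ is precisely what manufactures this self-refuting equivalence.
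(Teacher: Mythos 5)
Your proof is correct and takes essentially the same approach as the paper: the paper uses the very same two-element frame (worlds $x,y$ in place of your $0,1$, with $R_0 = \{(x,y)\}$ and $R_1 = \{(y,x)\}$) and the same valuation-independent computation showing that the biconditional $A \leftrightarrow [0]\neg[1]A$ must fail at the root. The only difference is cosmetic: you spell out the soundness appeal and the routine deduction of the failure of FPP from the first assertion, both of which the paper leaves implicit.
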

\begin{proof}
Let $(W, R_0, R_1)$ be a triple with $W : = \{x, y\}$, $R_0 := \{(x, y)\}$, and $R_1 := \{(y, x)\}$. 
It is easily shown that $(W, R_0, R_1)$ is a $\GLK$-frame in which $[1][1] \bot$ is valid. 
Let $A$ be any $\mathcal{L}_2$-formula and $\Vdash$ be any satisfaction relation on the frame. 
Since $x \Vdash A$ if and only if $y \Vdash [1]A$, and $y \Vdash [1]A$ if and only if $x \Vdash \neg [0] \neg [1] A$, we obtain that $x \Vdash A \leftrightarrow \neg [0] \neg [1] A$. 
Therefore, $x \nVdash A \leftrightarrow [0] \neg [1] A$. 
We conclude $(\GLK) \oplus [1][1]\bot \nvdash A \leftrightarrow [0] \neg [1] A$. 
\end{proof}

As a benefit of our embedding result, we get the following corollary in contrast to Theorem~\ref{lFPP}.

\begin{cor}\label{FailureofFPP2}
For any $\mathcal{L}(\rhd)$-formula $A$,  
\[
\IL^-(\PP) \nvdash A \leftrightarrow \top \rhd \lnot A. 
\] 
Hence, $\IL^-(\PP)$ does not enjoy FPP. 
\end{cor}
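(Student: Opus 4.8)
The plan is to transport the claim along the embedding of Proposition~\ref{Embed} and then invoke Proposition~\ref{FailureofFPP}. First I would unfold the translation of the candidate fixed point equation. By the clauses defining $\chi$,
\[
	\chi(\top \rhd \neg A) = [0]\bigl(\chi(\top) \to \langle 1 \rangle \chi(\neg A)\bigr) = [0]\bigl(\top \to \langle 1 \rangle \neg \chi(A)\bigr).
\]
Since $\langle 1 \rangle$ abbreviates $\neg [1] \neg$, the inner formula $\langle 1 \rangle \neg \chi(A)$ is $\neg [1] \neg \neg \chi(A)$, which is provably equivalent in $\GLK$ to $\neg [1] \chi(A)$ by classical reasoning together with the normality of $[1]$. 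Consequently $\chi(\top \rhd \neg A)$ is $\GLK$-equivalent to $[0] \neg [1] \chi(A)$, and therefore $\chi(A \leftrightarrow \top \rhd \neg A)$, which is $\chi(A) \leftrightarrow \chi(\top \rhd \neg A)$, is $\GLK$-equivalent to $\chi(A) \leftrightarrow [0] \neg [1] \chi(A)$.

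Next I would apply the equivalence of conditions $(1)$ and $(3)$ of Proposition~\ref{Embed} to the formula $A \leftrightarrow \top \rhd \neg A$ in place of its $A$: together with the $\GLK$-equivalence just established (harmless since $\GLK \subseteq (\GLK) \oplus [1][1]\bot$), this gives
\[
	\IL^-(\PP) \vdash A \leftrightarrow \top \rhd \neg A \iff (\GLK) \oplus [1][1]\bot \vdash \chi(A) \leftrightarrow [0] \neg [1] \chi(A).
\]
But $\chi(A)$ is an $\mathcal{L}_2$-formula, so Proposition~\ref{FailureofFPP}, applied with this particular formula in the role of its $A$, yields $(\GLK) \oplus [1][1]\bot \nvdash \chi(A) \leftrightarrow [0] \neg [1] \chi(A)$. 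Hence $\IL^-(\PP) \nvdash A \leftrightarrow \top \rhd \neg A$, which is the first assertion.

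For the failure of FPP, I would exhibit the witness $A(p) :\equiv \top \rhd \neg p$. The variable $p$ is modalized in $A(p)$, as its only occurrence lies within the scope of $\rhd$, so FPP would require a fixed point $F$ with $\var(F) \subseteq \varnothing$ and $\IL^-(\PP) \vdash F \leftrightarrow \top \rhd \neg F$. The first assertion rules this out for \emph{every} $\mathcal{L}(\rhd)$-formula $F$, so $A(p)$ has no fixed point and $\IL^-(\PP)$ fails to enjoy FPP. Note that $p$ is not left-modalized in $A(p)$, since it occurs in the right argument of $\rhd$; this is precisely why the result coexists with the $\ell$FPP of Theorem~\ref{lFPP}.

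The only genuinely computational point is the simplification of $\chi(\top \rhd \neg A)$ to $[0] \neg [1] \chi(A)$; everything else is a direct citation of the two preceding propositions. I expect no real obstacle here, the mild care needed being the handling of the double negation produced by the $\langle 1 \rangle$ abbreviation and the recording that the equivalence holds only up to $\GLK$-provability.
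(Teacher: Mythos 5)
Your proposal is correct and follows essentially the same route as the paper: both transport the fixed point equation through the embedding $\chi$ of Proposition~\ref{Embed} and then invoke Proposition~\ref{FailureofFPP}, with your version merely spelling out the simplification of $\chi(\top \rhd \neg A)$ to $[0]\neg[1]\chi(A)$ and the choice of witness $A(p) :\equiv \top \rhd \neg p$ that the paper leaves implicit.
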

\begin{proof}
Suppose, towards a contradiction, that $\IL^-(\PP) \vdash A \leftrightarrow \top \rhd \lnot A$ for some $\mathcal{L}(\rhd)$-formula $A$. 
Then, by Proposition~\ref{Embed}, $(\GLK) \oplus [1][1]\bot \vdash \chi(A) \leftrightarrow \chi(\top \rhd \neg A)$. 
It follows that $(\GLK) \oplus [1][1]\bot$ proves $\chi(A) \leftrightarrow [0] \neg [1] \chi(A)$. 
This contradicts Proposition~\ref{FailureofFPP}. 
\end{proof}

We say that the \textit{uniqueness of fixed points (UFP)} holds for a logic $L$ if for any $\mathcal{L}(\rhd)$-formula $A(p)$ such that $p$ is modalized in $A(p)$ and $A(p)$ does not contain $q$,
\[
	L \vdash \boxdot(p \leftrightarrow A(p) ) \land \boxdot (q \leftrightarrow A(q) ) \to ( p \leftrightarrow q),
\]
where $\boxdot F$ is an abbreviation for $\Box F \land F$. 
It was essentially proved in de Jongh and Visser~\cite[Theorem 2.1]{DeJVis91} that UFP holds for $\IL^-(\J{4}_+)$.  
As shown in~\cite[Lemma 3.10]{IKO20}, for any extension $L$ of $\IL^-$, if $L$ has CIP and UFP holds for $L$, then $L$ has FPP. 
This explains why Fact~\ref{CIPFPP}.2 holds. 
From Theorem~\ref{CIP} and Corollary~\ref{FailureofFPP2}, we obtain that UFP does not hold for $\IL^-(\PP)$.

\section{Arithmetical semantic aspects}\label{Sec_AC}

In this section, we investigate arithmetical semantics for $\IL^-(\PP)$. 
Firstly, we introduce appropriate arithmetical interpretations for $\IL^-(\PP)$ inspired from our embedding $\chi$. 
Secondly, we strengthen Theorem~\ref{SVC}. 
Then, by using such a strengthening, we prove the arithmetical completeness theorem for $\IL^-(\PP)$. 

Throughout this section, we assume that $T$ always denotes a consistent recursively enumerable extension of $\PA$ in the language $\mathcal{L}_A$ of arithmetic. 
We say that a formula $\tau(u)$ is a \textit{numeration} of $T$ if for any natural number $n$, $n$ is the G\"odel number of an element of $T$ if and only if $\PA \vdash \tau(\overline{n})$. 
For each numeration $\tau(u)$ of $T$, we can construct a formula $\Prf_{\tau}(x, y)$ saying that $y$ is the G\"odel number of a proof of a sentence whose G\"odel number is $x$ from the theory axiomatized by the set of sentences defined by $\tau$. 
Let $\PR_\tau(x)$ be the formula $\exists y \Prf_\tau(x, y)$. 
Then, the following fact holds: 

\begin{fact}[Feferman~\cite{Fef60}]\label{DC}
Let $\tau(u)$ be any numeration of $T$ and let $\varphi$ and $\psi$ be any $\mathcal{L}_A$-formulas. 
\begin{enumerate}
	\item If $T \vdash \varphi$, then $\PA \vdash \PR_\tau(\gn{\varphi})$. 
	\item $\PA \vdash \PR_\tau(\gn{\varphi \to \psi}) \to (\PR_\tau(\gn{\varphi}) \to \PR_\tau(\gn{\psi}))$. 
	\item If $\varphi$ is a $\Sigma_1$ sentence, then $\PA \vdash \varphi \to \PR_\tau(\gn{\varphi})$. 
\end{enumerate}
\end{fact}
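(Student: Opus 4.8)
The statement collects the three L\"ob derivability conditions for the Fefermanian predicate $\PR_\tau$, and the plan is to verify them one at a time along the classical arithmetization of metamathematics. Throughout I take $\Prf_\tau(x,y)$ to be the natural $\Sigma_1$ formalization (with $\Delta_0$ matrix) of the assertion that $y$ codes a derivation, each of whose axiom lines $a$ satisfies $\tau(a)$, ending in the formula coded by $x$; and I use that $T$ extends $\PA$, so that each axiom of $\PA$ has a code $a$ with $\PA \vdash \tau(\overline{a})$.

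For condition 1, I would suppose $T \vdash \varphi$, fix a concrete derivation of $\varphi$ from axioms of $T$, and let $m$ be its G\"odel number. Each axiom $a$ occurring in it satisfies $\PA \vdash \tau(\overline{a})$ since $\tau$ numerates $T$, while the remaining, purely combinatorial, assertions recorded by $\Prf_\tau(\gn{\varphi},\overline{m})$ hold of the fixed numerals $\gn{\varphi}$ and $\overline{m}$. As these are $\Delta_0$ facts about standard numerals, $\PA$ proves $\Prf_\tau(\gn{\varphi},\overline{m})$, whence $\PA \vdash \exists y\,\Prf_\tau(\gn{\varphi},y)$, which is $\PA \vdash \PR_\tau(\gn{\varphi})$.

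For condition 2, I would reason inside $\PA$: assuming $\PR_\tau(\gn{\varphi \to \psi})$ and $\PR_\tau(\gn{\varphi})$, choose codes $p,q$ of $\tau$-derivations of $\varphi \to \psi$ and of $\varphi$, concatenate them, and append one modus ponens line to obtain a code of a $\tau$-derivation of $\psi$. The function performing this construction is primitive recursive, and $\PA$ proves that its value again satisfies $\Prf_\tau$ with conclusion $\gn{\psi}$; hence $\PA$ proves $\PR_\tau(\gn{\psi})$, which is condition 2.

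Condition 3 is provable $\Sigma_1$-completeness and is the main obstacle, being the only one that needs an internal induction rather than a single formalized computation. The standard route is to first establish provable $\Delta_0$-completeness, namely $\PA \vdash \delta \to \PR_\tau(\gn{\delta})$ for $\Delta_0$ sentences $\delta$, by induction on $\delta$ using condition 2 together with the fact that the $\PA$-axioms are available as $\tau$-axioms inside $\PA$; then to pass bounded quantifiers through; and finally to treat the leading existential quantifier of $\exists x\,\theta(x)$ by producing, from a witness, a $\tau$-proof of the corresponding instance and thence of the existential closure. Since this is Feferman's classical theorem, I would present conditions 1 and 2 explicitly and defer the delicate coding behind condition 3 to~\cite{Fef60}.
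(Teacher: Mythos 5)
The paper itself gives no proof of this Fact---it is quoted directly from Feferman---so the comparison is between your sketch and the standard argument. Your handling of conditions 1 and 2 is essentially sound, but your setup contains one claim that is false in exactly the setting this paper needs, and your sketch of condition 3 rests on it. Your standing assumption that $\Prf_\tau(x,y)$ is ``$\Sigma_1$ with $\Delta_0$ matrix'' fails for general numerations: the clause ``every axiom line $a$ satisfies $\tau(a)$'' has the quantifier complexity of $\tau$ itself, so $\Prf_\tau$ and $\PR_\tau$ are in general only as simple as $\tau$. The paper works with $\Sigma_2$ numerations precisely because $\PR_\tau$ is then not $\Sigma_1$; indeed, if $\PR_\tau$ were always $\Sigma_1$, then condition 3 applied to the $\Sigma_1$ sentence $\PR_\tau(\gn{\varphi})$ would yield the L\"ob condition $\PA \vdash \PR_\tau(\gn{\varphi}) \to \PR_\tau(\gn{\PR_\tau(\gn{\varphi})})$ for every numeration, contradicting the paper's (and Feferman's) statement that this fails for suitable $\Sigma_2$ numerations. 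Conditions 1 and 2 survive the correction because they never really use the claimed complexity: for condition 1 what matters is that, for a standard proof code $m$, the sentence $\Prf_\tau(\gn{\varphi},\overline{m})$ reduces to finitely many $\PA$-provable $\Delta_0$ facts plus the finitely many instances $\tau(\overline{a})$ for the standard axiom codes $a$ occurring in the derivation, which is what you in effect argue; and condition 2 is a uniform syntactic construction that never inspects $\tau$.

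The genuine gap is in condition 3, where you invoke ``the fact that the $\PA$-axioms are available as $\tau$-axioms inside $\PA$.'' A numeration only guarantees the instance-wise statements $\PA \vdash \tau(\overline{a})$ for each standard element $a$ of $T$; it does not give the uniform statement $\PA \vdash \forall x\,(\mathrm{Ax}_{\PA}(x) \to \tau(x))$, and for the pathological numerations at issue (Feferman's, or the $\Sigma_2$ ones used in Section 5) $\PA$ provably cannot certify such global facts about the axiom set---this is the entire reason the L\"ob condition can fail. So an internal induction that appeals, inside $\PA$, to the availability of all $\PA$-axioms as $\tau$-axioms does not go through. The standard repair: prove formalized $\Sigma_1$-completeness relative to a fixed finite fragment, i.e.\ $\PA \vdash \sigma \to \mathrm{Prov}_{\mathrm{FOL}}(\gn{\pi \to \sigma})$, where $\pi$ is the conjunction of (say) Robinson's axioms and $\mathrm{Prov}_{\mathrm{FOL}}$ is pure predicate-logic provability; observe that $\PA$ proves uniformly that every pure-logic derivation is a $\tau$-derivation (it has no axiom lines to check, regardless of the complexity of $\tau$), so $\PA \vdash \sigma \to \PR_\tau(\gn{\pi \to \sigma})$; then apply condition 1 to the single sentence $\pi$ (which $T$ proves) and condition 2 to conclude $\PA \vdash \sigma \to \PR_\tau(\gn{\sigma})$. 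Equivalently, your induction can be salvaged provided every derivation constructed inside $\PA$ draws its axiom lines from one fixed standard finite set, so that ``each axiom line satisfies $\tau$'' reduces to the finitely many $\PA$-provable instances $\tau(\overline{a_1}), \dots, \tau(\overline{a_k})$.
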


If $\tau(u)$ is a $\Sigma_1$ numeration of $T$, then it is known that $\PR_\tau(x)$ is also $\Sigma_1$. 
Hence, $\PA \vdash \PR_\tau(\gn{\varphi}) \to \PR_\tau(\gn{\PR_\tau(\gn{\varphi})})$ holds for any $\mathcal{L}_A$-formula $\varphi$. 
On the other hand, there exist $\Sigma_2$ numerations $\tau(u)$ of $T$ such that $T \nvdash \PR_\tau(\gn{\varphi}) \to \PR_\tau(\gn{\PR_\tau(\gn{\varphi})})$ (see~\cite{Kur18_1,Kur18_2,Vis21}). 
We say that a numeration $\tau(u)$ of $T$ satisfies the \textit{L\"ob condition} if $\PA \vdash \PR_\tau(\gn{\varphi}) \to \PR_\tau(\gn{\PR_\tau(\gn{\varphi})})$ for any $\mathcal{L}_A$-formula $\varphi$. 

A mapping $f$ from the set of all propositional variables to a set of $\mathcal{L}_A$-sentences is called an \textit{arithmetical interpretation}. 
For any pair $(\tau_0, \tau_1)$ of numerations of $T$, every arithmetical interpretation $f$ is extended to a mapping $f'$ whose domain is the set of all $\mathcal{L}_2$-formulas by the following clauses: 
\begin{enumerate}
	\item $f'(\bot)$ is $0 = 1$; 
	\item $f'(\neg A)$ is $\neg f'(A)$; 
	\item $f'(A \circ B)$ is $f'(A) \circ f'(B)$ for $\circ \in \{\land, \lor, \leftrightarrow\}$; 
	\item $f'([k] A)$ is $\PR_{\tau_k}(\gn{f'(A)})$ for $k \in \{0, 1\}$. 
\end{enumerate}
In particular, it was proved that $\mathbf{GLK} = (\GLK) \oplus ([0]A \to [1]A)$ is arithmetically sound and complete with respect to the classes of all pairs $(\tau_0, \tau_1)$ of numerations such that $\tau_0(u)$ is $\Sigma_1$ and $\PA \vdash \forall x(\PR_{\tau_0}(x) \to \PR_{\tau_1}(x))$ (\cite[Corollary 4.11]{Kur18_1}). 
Also, Beklemishev~\cite[Theorem 1]{Bek92} proved that $\mathbf{CS}_1 = (\FGL) \oplus ([0]A \to [1][0]A) \oplus ([1]A \to [0][1]A)$ and its extensions are arithmetically sound and complete with respect to some appropriate classes of pairs of $\Sigma_1$ numerations. 

Here we extend arithmetical interpretations to mappings of $\mathcal{L}(\rhd)$-formulas inspired by our embedding result of $\IL^-(\PP)$ into bimodal logics. 
Since the translation $\chi$ introduced in the last section is such an embedding, the mapping $f' \circ \chi$ seems to be appropriate. 
In fact, by Proposition~\ref{Embed}, for any theorem $A$ of $\IL^-(\PP)$, we obtain $\GLK \vdash \chi(A)$, and if $\tau_0$ satisfies the L\"ob condition, then it is easily shown that $\PA \vdash (f' \circ \chi)(A)$. 

From this observation, we directly extend every arithmetical interpretation $f$ to a mapping $f_{\tau_0, \tau_1}$ from the set of all $\mathcal{L}(\rhd)$-formulas to a set of $\mathcal{L}_A$-sentences as follows: 
\begin{enumerate}
	\item $\FTT{\bot}$ is $0 = 1$; 
	\item $\FTT{\neg A}$ is $\neg \FTT{A}$; 
	\item $\FTT{A \circ B}$ is $\FTT{A} \circ \FTT{B}$ for $\circ \in \{\land, \lor, \leftrightarrow\}$; 
	\item $\FTT{\Box A}$ is $\PR_{\tau_0}(\gn{\FTT{A}})$; 
	\item $\FTT{A \rhd B}$ is $\PR_{\tau_0}(\gn{\FTT{A} \to \Con_{\tau_1 + \FTT{B}}})$. 
\end{enumerate}
Here for each $\mathcal{L}_A$-sentence $\psi$, $(\tau_1 + \psi)(u)$ is the numeration $\tau_1(u) \lor u = \gn{\psi}$ of the theory $T + \psi$. 

The following arithmetical soundness of $\IL^-(\PP)$ follows from the above argument. 

\begin{prop}[Arithmetical soundness for $\IL^-(\PP)$]\label{AS}
Let $A$ be any $\mathcal{L}(\rhd)$-formula $A$ and $(\tau_0, \tau_1)$ be any pair of numerations of $T$. 
If $\IL^-(\PP) \vdash A$ and $\tau_0$ satisfies the L\"ob condition, then for any arithmetical interpretation $f$, $\PA \vdash \FTT{A}$. 
\end{prop}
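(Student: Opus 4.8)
The plan is to reduce the claim to the arithmetical soundness of the composite interpretation $f' \circ \chi$. As already noted just before the statement, $\IL^-(\PP) \vdash A$ yields $\GLK \vdash \chi(A)$ by Proposition~\ref{Embed}; and since $\tau_0$ satisfies the L\"ob condition while $\tau_1$ validates the distribution axiom by Fact~\ref{DC}, the interpretation $f'$ verifies every theorem of $\GLK$ (the $[0]$-fragment is sound by the formalized L\"ob theorem, which follows from Fact~\ref{DC}.1, Fact~\ref{DC}.2 and the L\"ob condition on $\tau_0$, and the $[1]$-fragment only needs the minimal modal logic $\K$, i.e.\ Fact~\ref{DC}.1 and~\ref{DC}.2 for $\tau_1$). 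Hence $\PA \vdash (f' \circ \chi)(A)$. It therefore suffices to prove the key lemma that $f_{\tau_0, \tau_1}$ and $f' \circ \chi$ coincide modulo provable equivalence:
\[
	\PA \vdash \FTT{A} \leftrightarrow (f' \circ \chi)(A) \qquad \text{for every } \mathcal{L}(\rhd)\text{-formula } A.
\]

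I would prove this lemma by induction on the construction of $A$. The propositional cases are immediate since both maps commute with the Boolean connectives. For $\Box A$ we have $\FTT{\Box A} = \PR_{\tau_0}(\gn{\FTT{A}})$ and $(f'\circ\chi)(\Box A) = \PR_{\tau_0}(\gn{(f'\circ\chi)(A)})$, so this case follows from the induction hypothesis together with the \emph{regularity} property that whenever $\PA \vdash \varphi \leftrightarrow \psi$ one has $\PA \vdash \PR_{\tau_k}(\gn{\varphi}) \leftrightarrow \PR_{\tau_k}(\gn{\psi})$. This regularity holds for each numeration $\tau_k$ because $T \supseteq \PA$, so $\PA \vdash \varphi \leftrightarrow \psi$ gives $T \vdash \varphi \to \psi$ and $T \vdash \psi \to \varphi$, whence Fact~\ref{DC}.1 produces $\PA \vdash \PR_{\tau_k}(\gn{\varphi \to \psi})$ and $\PA \vdash \PR_{\tau_k}(\gn{\psi \to \varphi})$, and Fact~\ref{DC}.2 turns these into the stated $\PA$-equivalence.

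The only case requiring computation is $A \rhd B$. Unfolding the definitions gives
\[
	\FTT{A \rhd B} = \PR_{\tau_0}\bigl(\gn{\FTT{A} \to \Con_{\tau_1 + \FTT{B}}}\bigr),
\]
whereas, using $\chi(A \rhd B) = [0](\chi(A) \to \langle 1 \rangle \chi(B))$ and unwinding $f'$,
\[
	(f'\circ\chi)(A \rhd B) = \PR_{\tau_0}\bigl(\gn{(f'\circ\chi)(A) \to \neg\PR_{\tau_1}(\gn{\neg(f'\circ\chi)(B)})}\bigr).
\]
First I would invoke the formalized deduction theorem $\PA \vdash \Con_{\tau_1 + \psi} \leftrightarrow \neg\PR_{\tau_1}(\gn{\neg\psi})$ with $\psi = \FTT{B}$, and then apply the regularity of $\PR_{\tau_1}$ and the induction hypothesis for $B$ to replace $\FTT{B}$ by $(f'\circ\chi)(B)$ inside the predicate; combined with the hypothesis for $A$ this shows that the two sentences standing under the outer $\PR_{\tau_0}$ are $\PA$-provably equivalent. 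One final application of the regularity of $\PR_{\tau_0}$ then gives the equivalence of $\FTT{A \rhd B}$ and $(f'\circ\chi)(A \rhd B)$, completing the lemma; chaining it with $\PA \vdash (f'\circ\chi)(A)$ yields $\PA \vdash \FTT{A}$.

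The hard part will be the intensionality of the provability predicate: $\Con_{\tau_1 + \FTT{B}}$ and $\neg\PR_{\tau_1}(\gn{\neg(f'\circ\chi)(B)})$ are only provably equivalent, not syntactically identical, and they sit inside the G\"odel quotation of the outer box, so no direct substitution is available. The technical heart is therefore the regularity lemma, which must be established uniformly from the derivability conditions of Fact~\ref{DC}. It is worth flagging that $\tau_1$ is \emph{not} assumed to satisfy the L\"ob condition; this causes no trouble, since the $[1]$-modality is only required to validate $\K$, and the L\"ob condition on $\tau_0$ enters solely through the already-granted soundness of $\GLK$ under $f'$.
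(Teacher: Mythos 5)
Your proposal is correct and takes essentially the same route as the paper: the paper derives Proposition~\ref{AS} exactly from Proposition~\ref{Embed} together with the observation that, when $\tau_0$ satisfies the L\"ob condition, $\PA \vdash (f' \circ \chi)(A)$ for every $\GLK$-theorem $\chi(A)$. The paper leaves implicit the identification of $\FTT{A}$ with $(f'\circ\chi)(A)$ up to $\PA$-provable equivalence (via the formalized deduction theorem and the regularity of the provability predicates), which is precisely the detail your key lemma supplies.
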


The remainder of this section is devoted to proving the converse of Proposition~\ref{AS}. 
Furthermore, we prove the following uniform version of the arithmetical completeness theorem. 

\begin{thm}[Uniform arithmetical completeness theorem for $\IL^-(\PP)$]\label{AC}
There exist a pair $(\tau_0, \tau_1)$ of $\Sigma_2$ numerations of $T$ such that both $\tau_0$ and $\tau_1$ satisfy the L\"ob condition and an arithmetical interpretation $f$ such that for any $\mathcal{L}(\rhd)$-formula $A$, the following are equivalent: 
\begin{enumerate}
	\item $\IL^-(\PP) \vdash A$. 
	\item $\PA \vdash \FTT{A}$. 
	\item $T \vdash \FTT{A}$. 
\end{enumerate}
\end{thm}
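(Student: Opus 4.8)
The plan is to prove the three implications as a cycle. The implication $(1 \Rightarrow 2)$ is exactly Proposition~\ref{AS}, which applies because the numerations I shall construct make $\tau_0$ satisfy the L\"ob condition; and $(2 \Rightarrow 3)$ is immediate since $T$ extends $\PA$. Hence the whole content lies in $(3 \Rightarrow 1)$: I must exhibit a single pair $(\tau_0, \tau_1)$ of $\Sigma_2$ numerations satisfying the L\"ob condition, together with a single arithmetical interpretation $f$, so that $\IL^-(\PP) \nvdash A$ implies $T \nvdash \FTT{A}$ for \emph{every} $A$ at once. Because the pair and the interpretation must be uniform in $A$, a Solovay-style argument has to be run not over one finite countermodel but over a single model capturing all non-theorems simultaneously.

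Accordingly, the first step is to strengthen Theorem~\ref{SVC} to a universal-model form: there is a countable, recursively presented simplified $\IL^-(\PP)$-model $\mathcal{U} = (W, R, S, \Vdash)$ satisfying the condition $(\dagger)$ together with a distinguished root $b \in W$ such that every non-theorem of $\IL^-(\PP)$ is refuted at some $w$ with $b \mathrel{R} w$. I would obtain $\mathcal{U}$ by taking, for each non-theorem, a finite countermodel satisfying $(\dagger)$ supplied by Theorem~\ref{SVC}, forming their disjoint union, and adjoining a fresh root $b$ lying $R$-below every world of the union. Transitivity and converse well-foundedness of $R$ survive because each summand is finite; $S$ acquires no new pairs, so $(\dagger)$ persists; and since each summand is a generated submodel, forcing is inherited, so every non-theorem still fails somewhere $R$-above $b$. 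Viewed as a $\GLK$-frame $(W, R, S)$, the condition $(\dagger)$ yields the validity of $[1][1]\bot$, which is what lets the bimodal machinery engage.

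The second step transfers the problem through the embedding $\chi$ and runs the Solovay construction of \cite{Kur18_2} over $\mathcal{U}$. As recorded in the discussion preceding Proposition~\ref{AS}, the interpretation factors as $\FTT{\cdot} = f' \circ \chi$, so it suffices to realise the bimodal frame $(W, R, S)$ arithmetically. I would define a primitive recursive Solovay function $h \colon \N \to W$ starting at $b$, climbing $R$ according to provability facts about its own limit $\ell$, and stabilising along the conversely well-founded order $R$; using the extra quantifier afforded by $\Sigma_2$, I would then define the numerations $\tau_0$ and $\tau_1$ of $T$ self-referentially in terms of $h$ so that $\PR_{\tau_0}$ tracks $R$-accessibility (the $\GL$-component $[0]$) and $\PR_{\tau_1}$ tracks $S$-accessibility (the $\K$-component $[1]$), with both satisfying the L\"ob condition. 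Setting $f(p) := \bigvee\{\,\ell = w \mid w \Vdash p\,\}$, the central lemma to prove is that $\PA$ verifies, for each world $w$ and each formula $B$, that $\ell = w$ implies $\FTT{B} \leftrightarrow (w \Vdash B)$, the right-hand value being a fixed sentence. Together with the standard Solovay fact that for every $w$ with $b \mathrel{R} w$ the theory $T$ is consistent with the sentence $\ell = w$, this yields $T \nvdash \FTT{A}$ whenever $A$ is refuted at such a $w$: for then $\PA \vdash (\ell = w) \to \neg\FTT{A}$, so $T \vdash \FTT{A}$ would force $T \vdash \neg(\ell = w)$, contradicting that consistency. This completes $(3 \Rightarrow 1)$.

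The main obstacle is the simultaneous fulfilment, over the single infinite model $\mathcal{U}$, of requirements that pull against one another: $\tau_0$ and $\tau_1$ must both satisfy the L\"ob condition, which forces them out of the $\Sigma_1$ world, since the automatic correlations of $\Sigma_1$ predicates would validate unwanted interaction principles such as $[0]A \to [1]A$ and thereby collapse the fusion; yet the two must remain independent enough that the truth lemma realises exactly the fusion-style frame $\mathcal{U}$ rather than a stronger bimodal logic; and the recursion defining $h$ over an infinite recursive frame must still provably stabilise so that the Solovay fixed-point equations close. Building such a decoupled pair of $\Sigma_2$ L\"ob numerations is precisely the delicate device of \cite{Kur18_2}, and the bulk of the work is in adapting that construction to $\mathcal{U}$ and verifying the truth lemma through the factorisation $\FTT{\cdot} = f' \circ \chi$; the remaining Solovay bookkeeping is then routine.
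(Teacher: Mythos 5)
Your overall architecture (a single universal model assembled from finite countermodels plus a root, a uniform Solovay-style construction over it, a truth lemma, and the consistency argument at the root) is the same as the paper's, but there is a genuine gap at exactly the point you delegate to \cite{Kur18_2}: the frame over which the Solovay function runs. You build $\mathcal{U}$ only from countermodels supplied by Theorem~\ref{SVC}, i.e., frames satisfying $(\dagger)$, and you have the function ``climbing $R$'' and ``stabilising along the conversely well-founded order $R$''. Climbing $R$ alone cannot work. Under the interpretation $\FTT{C \rhd D}=\PR_{\tau_0}(\gn{\FTT{C}\to\Con_{\tau_1+\FTT{D}}})$, the truth lemma at a world $j$ with $j\Vdash C\rhd D$ and $j \Vdash C$ must deliver $\PA\vdash\alpha(\overline{j})\to\Con_{\tau_1+\FTT{D}}$, and the only mechanism available is to exhibit an $S$-successor $k$ with $k\Vdash D$ and $\PA\vdash\alpha(\overline{j})\to\neg\PR_{\sigma}(\gn{\neg\alpha(\overline{k})})$ (the paper's Lemma~\ref{Lem2} with $Q=S$; note $\tau_1\supseteq\sigma$). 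Solovay's construction yields such consistency statements only for worlds reachable along the relation the function actually climbs; for a world $k$ that is an $S$-successor but not an $R$-successor of $j$, a function climbing only $R$ gives the opposite, $\PA\vdash\alpha(\overline{j})\to\PR_{\sigma}(\gn{\neg\alpha(\overline{k})})$, which destroys $\Con_{\tau_1+\FTT{D}}$. So the function must climb the transitive closure $R^*$ of $R\cup S$ --- and this is what the construction of \cite{Kur18_2}, which you invoke, in fact does.

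But once you climb $R\cup S$, you need $R^*$ to be conversely well-founded, and condition $(\dagger)$ does not give this: the two-element frame $W=\{x,y\}$, $R=\{(x,y)\}$, $S=\{(y,x)\}$ (exactly the frame of Proposition~\ref{FailureofFPP}) satisfies $(\dagger)$, yet $R\cup S$ has a cycle. This is precisely the gap the paper closes with Theorem~\ref{SVC2}, which is absent from your proposal: a further transformation of the $(\dagger)$-countermodels (taking $d(A)+1$ leveled copies, with each $S$-step decreasing the level) produces finite countermodels in which $S$ is transitive and $R\cup S$ is conversely well-founded, and only then is the disjoint-union model formed and the Solovay formula $\alpha$ constructed relative to $R^*$. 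Without this strengthening your pair $(\tau_0,\tau_1)$ cannot be defined at all. Two smaller inaccuracies: the L\"ob condition for $\tau_1$ rests on the transitivity of $S$ (the paper's Lemma~\ref{Lem3}), which $(\dagger)$ does happen to give you vacuously; and the factorisation $\FTT{\cdot}=f'\circ\chi$ you lean on holds only up to $\PA$-provable equivalence, since $\Con_{\tau_1+\psi}$ and $\neg\PR_{\tau_1}(\gn{\neg\psi})$ are different sentences --- the paper avoids this by proving the truth lemma directly for $\FTT{\cdot}$ rather than through $\chi$.
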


From Proposition~\ref{AS} and Theorem~\ref{AC}, we obtain the following corollary. 

\begin{cor}
For any $\mathcal{L}(\rhd)$-formula $A$, the following are equivalent: 
\begin{enumerate}
	\item $\IL^-(\PP) \vdash A$. 
	\item For any pair $(\tau_0, \tau_1)$ of numerations of $T$ in which $\tau_0$ satisfies the L\"ob condition and any arithmetical interpretation $f$, $\PA \vdash \FTT{\chi(A)}$. 
	\item For any pair $(\tau_0, \tau_1)$ of $\Sigma_2$ numerations of $T$ such that both $\tau_0$ and $\tau_1$ satisfy the L\"ob condition and any arithmetical interpretation $f$, $T \vdash \FTT{\chi(A)}$. 
\end{enumerate}
\end{cor}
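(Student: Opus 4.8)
The plan is to obtain the corollary purely as a bookkeeping consequence of Proposition~\ref{AS} (arithmetical soundness) and Theorem~\ref{AC} (uniform arithmetical completeness), once the two notations $\FTT{A}$ and $\FTT{\chi(A)}$ appearing across these statements have been reconciled. Recall that $f'$ is the extension of $f$ to $\mathcal{L}_2$-formulas with $f'([k]C) = \PR_{\tau_k}(\gn{f'(C)})$, whereas $\FTT{\cdot} = f_{\tau_0,\tau_1}(\cdot)$ is the direct extension of $f$ to $\mathcal{L}(\rhd)$-formulas. The first step is to verify, by induction on the construction of $A$, that $\FTT{A}$ and $(f'\circ\chi)(A)$ are provably equivalent in $\PA$. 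The Boolean and $\Box$ clauses are immediate; the only clause needing attention is $A \rhd B$, where unwinding the abbreviations $\langle 1\rangle = \neg[1]\neg$ and $\Con_{\tau_1+\psi}$ shows that $(f'\circ\chi)(A \rhd B)$ reduces to $\PR_{\tau_0}(\gn{(f'\circ\chi)(A) \to \Con_{\tau_1 + (f'\circ\chi)(B)}})$, which matches the defining clause $\FTT{A \rhd B} = \PR_{\tau_0}(\gn{\FTT{A} \to \Con_{\tau_1 + \FTT{B}}})$ after applying the induction hypothesis to $A$ and $B$. This identification permits reading $\FTT{\chi(A)}$ in conditions 2 and 3 as $(f'\circ\chi)(A)$, equivalently $\FTT{A}$, exactly as in Proposition~\ref{AS} and Theorem~\ref{AC}.

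With the notation reconciled, the three implications follow at once. For $(1 \Rightarrow 2)$ I would apply Proposition~\ref{AS} directly: assuming $\IL^-(\PP) \vdash A$, for every pair $(\tau_0, \tau_1)$ with $\tau_0$ satisfying the L\"ob condition and every arithmetical interpretation $f$ we get $\PA \vdash \FTT{A}$, which is precisely condition 2. For $(2 \Rightarrow 3)$, note that a pair of $\Sigma_2$ numerations in which both $\tau_0$ and $\tau_1$ satisfy the L\"ob condition is in particular a pair in which $\tau_0$ satisfies the L\"ob condition; hence condition 2 already gives $\PA \vdash \FTT{A}$ for each such pair, and since $T$ extends $\PA$ this yields $T \vdash \FTT{A}$, which is condition 3.

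The implication $(3 \Rightarrow 1)$ is the one that genuinely uses Theorem~\ref{AC}, in contrapositive form. Supposing $\IL^-(\PP) \nvdash A$, Theorem~\ref{AC} furnishes a specific pair $(\tau_0, \tau_1)$ of $\Sigma_2$ numerations, both satisfying the L\"ob condition, together with an arithmetical interpretation $f$, for which $\IL^-(\PP) \vdash A$ is equivalent to $T \vdash \FTT{A}$. Since $\IL^-(\PP) \nvdash A$, this witnessing pair gives $T \nvdash \FTT{A}$, so the universally quantified condition 3 fails. Hence $3 \Rightarrow 1$, completing the cycle.

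I do not anticipate any real obstacle: all the substantive work has already been carried out in Proposition~\ref{AS} and Theorem~\ref{AC}, and the corollary merely repackages them. The one point that must be handled carefully is the opening reconciliation $\FTT{A} = (f'\circ\chi)(A)$, since conditions 2 and 3 are phrased through the embedding $\chi$ while the two cited results are phrased through the direct interpretation $f_{\tau_0,\tau_1}$; once this routine induction is recorded, nothing further is needed.
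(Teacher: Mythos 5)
Your proof is correct and follows exactly the paper's intended derivation, which obtains the corollary directly from Proposition~\ref{AS} (giving $1 \Rightarrow 2$), the observation that the pairs in condition 3 form a subclass of those in condition 2 together with $T \supseteq \PA$ (giving $2 \Rightarrow 3$), and the contrapositive of Theorem~\ref{AC}, whose witnessing pair $(\tau_0,\tau_1)$ and interpretation $f$ refute condition 3 whenever $\IL^-(\PP) \nvdash A$ (giving $3 \Rightarrow 1$). Your opening reconciliation of $\FTT{\chi(A)}$ with $\FTT{A}$ --- which implicitly uses the formalized deduction theorem to identify $\neg\PR_{\tau_1}(\gn{\neg\psi})$ with $\Con_{\tau_1+\psi}$ provably in $\PA$, plus substitution of provable equivalents under $\PR_{\tau_0}$ --- is sound and addresses what is in effect a notational slip in the corollary's statement, where the two cited results are phrased via $\FTT{A}$ rather than via the embedding $\chi$.
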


Before proving Theorem~\ref{AC}, we strengthen Theorem~\ref{SVC}. 

\begin{defn}
For each $\mathcal{L}(\rhd)$-formula $A$, we define the natural number $d(A)$ recursively as follows: 
\begin{enumerate}
	\item $d(p) = d(\bot) = 0$; 
	\item $d(A \circ B) = \max\{d(A), d(B)\}$ for $\circ \in \{\land, \lor, \to\}$; 
	\item $d(\neg A) = d(\Box A) = d(A)$; 
	\item $d(A \rhd B) = \max\{d(A), d(B)+1\}$. 
\end{enumerate}
\end{defn}

\begin{thm}\label{SVC2}
The logic $\IL^-(\PP)$ is sound and complete with respect to the class of all finite simplified $\IL^-(\PP)$-frames $(W, R, S)$ such that $S$ is transitive and $R \cup S$ is conversely well-founded. 
\end{thm}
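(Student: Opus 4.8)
The plan is to derive both directions of Theorem~\ref{SVC2} from Theorem~\ref{SVC}. Soundness is immediate: every finite simplified $\IL^-(\PP)$-frame $(W,R,S)$ with $S$ transitive and $R\cup S$ conversely well-founded is in particular a simplified $\IL^-(\PP)$-frame, so by the implication $(1\Rightarrow 2)$ of Theorem~\ref{SVC} (which rests on Proposition~\ref{Psou}) every theorem of $\IL^-(\PP)$ is valid in it. Hence only completeness requires work, and I would obtain it by transforming the countermodel supplied by Theorem~\ref{SVC} into one of the desired shape.

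So suppose $\IL^-(\PP)\nvdash A$. By Theorem~\ref{SVC} there are a finite simplified $\IL^-(\PP)$-model $(W,R,S,\Vdash)$ satisfying condition $(\dagger)$ and a world $w_0$ with $w_0\nVdash A$. The idea is to stratify this model into $d(A)+1$ levels so that each $S$-step is forced to climb exactly one level, thereby destroying the $R\cup S$-cycles that $(\dagger)$-models may still contain. Put $W^\star:=W\times\{0,1,\dots,d(A)\}$ and define $(w,i)\, R^\star\, (w',i')$ iff $i=i'$ and $wRw'$, and $(w,i)\, S^\star\, (w',i')$ iff $i'=i+1$ and $wSw'$; let $(w,i)\Vdash^\star p$ iff $w\Vdash p$. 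Then $R^\star$ copies $R$ level by level, so it is transitive and conversely well-founded and $(W^\star,R^\star,S^\star)$ is a simplified $\IL^-(\PP)$-frame. Since $(\dagger)$ forbids $wSw'Sw''$, the composition $S^\star\circ S^\star$ is empty, so $S^\star$ is vacuously transitive. For converse well-foundedness of $R^\star\cup S^\star$, fix $\eta\colon W\to\N$ with $wRw'\Rightarrow\eta(w)>\eta(w')$ and $N>\max_w\eta(w)$, and set $g(w,i):=(d(A)-i)\,N+\eta(w)$; a direct check shows $g$ strictly decreases along both $R^\star$ and $S^\star$, so $R^\star\cup S^\star$ admits no infinite ascending chain.

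The heart of the argument is the level-indexed truth lemma: for every $i\le d(A)$ and every formula $B$ with $d(B)\le d(A)-i$ one has $(w,i)\Vdash^\star B\iff w\Vdash B$, proved by induction on $B$. The Boolean and $\Box$ cases stay at the same level $i$ and go through because $d$ does not increase there. The decisive case is $B=C\rhd D$: here $d(B)\le d(A)-i$ forces $d(C)\le d(A)-i$ and $d(D)\le d(A)-(i+1)$, and since $R^\star$ preserves the level while $S^\star$ raises it by one, evaluating $(w,i)\Vdash^\star C\rhd D$ amounts to quantifying over $R$-successors $x$ at level $i$ (where the hypothesis on $C$ applies) and over $S$-successors $y$ at level $i+1$ (where the hypothesis on $D$ applies). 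Thus $(w,i)\Vdash^\star C\rhd D$ collapses to the original clause for $w\Vdash C\rhd D$. Applying the lemma at $(w_0,0)$, where $d(A)\le d(A)-0$, yields $(w_0,0)\nVdash^\star A$ in a frame of the required kind, which finishes completeness.

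I expect the main obstacle to be calibrating the number of levels correctly: the function $d$ measures exactly how many further $S^\star$-jumps a formula can still trigger, and the induction in the truth lemma closes only because $d(A)$ levels are precisely enough to absorb the deepest $\rhd$-nesting in $A$ while each jump consumes one level. Reliance on $(\dagger)$ from Theorem~\ref{SVC} is what makes $S^\star$ transitive for free; without it one would have to pass to a transitive closure of $S^\star$ and re-verify the $\rhd$-case of the truth lemma under the enlarged relation.
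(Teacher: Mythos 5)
Your proof is correct and follows essentially the same route as the paper: both take the $(\dagger)$-countermodel from Theorem~\ref{SVC}, stratify it into $d(A)+1$ copies with $R$ preserving the level and $S$ shifting it by one, and close the argument with the same level-indexed truth lemma (your increasing levels are just the paper's decreasing index $n \mapsto d(A)-n$ relabeled). The only cosmetic difference is that you verify converse well-foundedness of $R^\star \cup S^\star$ via an explicit ranking function, where the paper argues informally that each chain contains finitely many $S^\star$-steps.
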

\begin{proof}
Suppose $\IL^-(\PP) \nvdash A$, then by Theorem~\ref{SVC}, there exists a finite simplified $\IL^-(\PP)$-model $\mathcal{M} = (W, R, S, \Vdash)$ satisfying the condition $(\dagger)$ and an element $w \in W$ such that $w \nVdash A$. 
We define a new simplified $\IL^-(\PP)$-model $\mathcal{M}^* = (W^*, R^*, S^*, \Vdash^*)$ as follows: 
\begin{itemize}
	\item $W^* : = \{(x, n) \mid x \in W$ and $0 \leq n \leq d(A)\}$;	
	\item $(x, n) R^* (y, m) : \iff x R y$ and $n = m$; 
	\item $(x, n) S^* (y, m) : \iff x S y$ and $n = m+1$; 
	\item $(x, n) \Vdash^* p : \iff x \Vdash p$. 
\end{itemize}

The frame $(W^*, R^*, S^*)$ consists of the $d(A)+1$ copies of the original frame $(W, R, S)$ with different levels as indicated in the second components of elements of $W$.  
The $R^*$-transition does not change the level, and the $S^*$-transition reduces the level by one. 

By the condition $(\dagger)$ for $(W, R, S)$ and the definition of our $S^*$, the frame $(W^*, R^*, S^*)$ also satisfies the condition $(\dagger)$. 
In particular, we have that $S^*$ is transitive. 

The value of the second component of each element of $W^*$ is not changed by the $R^*$-transition, but it is decreased by the $S^*$-transition. 
So every $R^* \cup S^*$-chain of elements of $W^*$ contains only a finite number of $S^*$-transitions. 
Obviously $R^*$ is conversely well-founded because so is $R$. 
Hence, the relation $R^* \cup S^*$ is also conversely well-founded. 

It suffices to show that $A$ is not valid in the model $\mathcal{M}^*$. 

\begin{cl}
For any $\mathcal{L}(\rhd)$-formula $B$ and any $(x, n) \in W^*$, if $d(B) \leq n$, then 
\[
	(x, n) \Vdash^* B \iff x \Vdash B. 
\]
\end{cl}
\begin{proof}
We prove the claim by induction on the construction of $B$. 
We give only a proof of the case that $B$ is of the form $C \rhd D$. 
Assume $d(C \rhd D) \leq n$. 

$(\Rightarrow)$: 
Suppose $(x, n) \Vdash^* C \rhd D$. 
Let $y \in W$ be any element such that $x R y$ and $y \Vdash C$. 
Then, $(x, n) R^* (y, n)$. 
Since $d(C) \leq d(C \rhd D) \leq n$, by the induction hypothesis, $(y, n) \Vdash^* C$. 
Then, there exists a $(z, m) \in W^*$ such that $(y, n) S^* (z, m)$ and $(z, m) \Vdash^* D$. 
In this case, $y S z$ and $n = m + 1$. 
Since $d(D) + 1 \leq d(C \rhd D) \leq n = m + 1$, we have $d(D) \leq m$. 
Then, by the induction hypothesis, $z \Vdash D$. 
Therefore, $x \Vdash C \rhd D$. 

$(\Leftarrow)$: 
Suppose $x \Vdash C \rhd D$. 
Let $(y, m) \in W^*$ be such that $(x, n) R^* (y, m)$ and $(y, m) \Vdash^* C$. 
Then, $x R y$ and $n = m$. 
Since $d(C) \leq d(C \rhd D) \leq n = m$, by the induction hypothesis, $y \Vdash C$. 
Then, there exists a $z \in W$ such that $y S z$ and $z \Vdash D$. 
Since $d(D) + 1 \leq d(C \rhd D) \leq n$, we have $d(D) \leq n -1$. 
By the induction hypothesis, $(z, n - 1) \Vdash^* D$. 
Also, $(y, m) S^* (z, n - 1)$ because $m = (n - 1) + 1$. 
We conclude $(x, n) \Vdash^* C \rhd D$.~\qedhere
\end{proof}

By the claim, we obtain $(w, d(A)) \nVdash^* A$, and hence $A$ is not valid in $\mathcal{M}^*$. 
\end{proof}

Our proof of Theorem~\ref{AC} is merely a tracing of the proof of~\cite[Theorem 4.1]{Kur18_2}. 
Therefore, we only give an outline of a proof and we leave the details to that paper.  

From our proofs of Theorems~\ref{SVC} and~\ref{SVC2}, we obtain a primitive recursively represented simplified $\IL^-(\PP)$-model $\mathcal{M} = (W, R, S, \Vdash)$ satisfying the following conditions: 
\begin{itemize}
	\item $W = \omega$; 
	\item For any $x \in W \setminus \{0\}$, $0 R x$; 
	\item $S$ is transitive and $R \cup S$ is conversely well-founded; 
	\item The restriction of $\mathcal{M}$ to the set $W \setminus \{0\}$ is a disjoint union of infinitely many finite simplified $\IL^-(\PP)$-models; 
	\item For any $\mathcal{L}(\rhd)$-formula $A$, if $\IL^-(\PP) \nvdash A$, then there exists an $i \in W \setminus \{0\}$ such that $i \nVdash A$. 
\end{itemize}

Let $R^*$ be the transitive closure of $R \cup S$. 
Also, let $\sim$ be an equivalence relation on $W \setminus \{0\}$ defined by
\[
	i \sim j : \iff i\ \text{and}\ j\ \text{belong to the same model in the disjoint union}.
\]
Let $x R y$, $x S y$, $x R^* y$, and $x \sim y$ be $\Delta_1(\PA)$ formulas naturally representing the corresponding binary relations in $\PA$. 
We may assume that $\PA$ proves several basic facts about these relations. 
For example, for each $i \neq 0$, $\PA \vdash \forall x (\overline{i} R x \leftrightarrow \bigvee_{i R j} x = \overline{j})$. 

Let $\sigma(u)$ be a $\Sigma_1$ numeration of $T$ such that $T \nvdash \neg \Con_T^n$ for all $n \in \omega$, where $\Con_T^n$ is the sentence obtained by applying $\PR_T(\cdot)$ to $0=1$ $n$ times. 
The existence of such a numeration is proved in~\cite[Lemma 7]{Bek90}. 
Notice that the assumption that $T$ is an extension of $\PA$ is used to guarantee the existence of $\sigma(u)$. 

Since $R^*$ is transitive and conversely well-founded, as in the usual proof of the uniform arithmetical completeness theorem of $\GL$, we obtain a $\Sigma_2$ Solovay formula $\alpha(x)$ satisfying the following conditions (cf.~\cite[Chapter 9]{Boo93}): 
\begin{enumerate}
	\item $\PA \vdash \exists x \alpha(x)$;
	\item $\PA \vdash \forall x \forall y(\alpha(x) \land \alpha(y) \to x = y)$;
	\item $\PA \vdash \forall x \forall y(\alpha(x) \land x R^* y \to \neg \PR_{\sigma}(\gn{\neg \alpha(\dot{y})}))$; 
	\item $\PA \vdash \forall x(\alpha(x) \land x \neq 0 \to \PR_{\sigma}(\gn{\exists z(\dot{x} R^* z \land \alpha(z))}))$; 
	\item $\N \models \alpha(0)$. 
\end{enumerate}
Here $\N$ is the standard model of arithmetic. 

In the following, let $Q \in \{R, S\}$. 
Define $\delta_Q(x, u)$ to be the $\Sigma_1$ formula
\[
	\exists z(x \sim z \land u = \gn{\neg \alpha(\dot{z})} \land \neg x Q z). 
\]
Also, define $\gamma_Q(u)$ to be the $\Sigma_2$ formula
\[
	\exists x(\alpha(x) \land x \neq 0 \land \delta_Q(x, u)). 
\]
Let $\tau_Q(u)$ be the $\Sigma_2$ formula $\sigma(u) \lor \gamma_Q(u)$. 
Since $\PA \vdash \alpha(0) \to \forall u \neg \gamma_Q(u)$, we have $\PA \vdash \alpha(0) \to \forall u \bigl(\tau_Q(u) \leftrightarrow \sigma(u) \bigr)$. 
Then, $\tau_Q(u)$ is a $\Sigma_2$ numeration of $T$ (cf.~\cite[Lemma 4.13]{Kur18_2}). 

As in the proof of~\cite[Theorem 4.1]{Kur18_2}, we can prove the following lemmas. 

\begin{lem}[{cf.~\cite[Lemma 4.6]{Kur18_2}}]\label{Lem1}\leavevmode
\begin{enumerate}
	\item $\PA \vdash \forall x \forall y \bigl(\alpha(x) \land x \neq 0 \land x \sim y \land \neg x Q y \to \PR_{\tau_Q}(\gn{\neg \alpha(\dot{y})}) \bigr)$. 
	\item $\PA \vdash \forall x \bigl(\alpha(x) \land x \neq 0 \to \PR_{\tau_Q}(\gn{\exists z (\dot{x} Q z \land \alpha(z))}) \bigr)$. 
\end{enumerate}
\end{lem}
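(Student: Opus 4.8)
The plan is to establish both items by the strategy used for~\cite[Lemma~4.6]{Kur18_2}, exploiting that $\tau_Q$ is obtained from the $\Sigma_1$ numeration $\sigma$ by adjoining, through $\gamma_Q$, exactly the sentences $\neg\alpha(\dot z)$ that ought to be refuted from the viewpoint of a ``true'' node. Throughout I argue inside $\PA$, using the standard fact that $\PA \vdash \tau_Q(v) \to \PR_{\tau_Q}(v)$ for codes $v$ of sentences, since an axiom has a one-line proof, together with $\PA \vdash \sigma(v) \to \tau_Q(v)$ (and hence $\PA \vdash \PR_{\sigma}(v) \to \PR_{\tau_Q}(v)$), which follows from $\tau_Q(u) \equiv \sigma(u) \lor \gamma_Q(u)$.

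For the first item I reason directly. Fix $x,y$ and assume $\alpha(x) \land x \neq 0 \land x \sim y \land \neg x Q y$. Taking $z := y$ as a witness, the $\Sigma_1$ formula $\delta_Q(x,\gn{\neg\alpha(\dot y)})$ holds, since $x \sim y$, the code equals $\gn{\neg\alpha(\dot z)}$, and $\neg x Q z$. Hence $\alpha(x) \land x \neq 0 \land \delta_Q(x,\gn{\neg\alpha(\dot y)})$ yields $\gamma_Q(\gn{\neg\alpha(\dot y)})$, so $\tau_Q(\gn{\neg\alpha(\dot y)})$. Thus $\neg\alpha(\dot y)$ is an axiom of the theory numerated by $\tau_Q$, whence $\PR_{\tau_Q}(\gn{\neg\alpha(\dot y)})$. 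As every step formalizes, this gives item~1.

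For the second item I start from the fourth defining property of the Solovay formula, $\PA \vdash \alpha(x) \land x \neq 0 \to \PR_{\sigma}(\gn{\exists z(\dot x R^* z \land \alpha(z))})$, and push it into $\tau_Q$ to obtain $\PR_{\tau_Q}(\gn{\exists z(\dot x R^* z \land \alpha(z))})$. The task is then to convert $R^*$-existence into $Q$-existence under $\PR_{\tau_Q}$. The relevant structural facts about the fixed model, provable in $\PA$, are that for $x \neq 0$ every $R^*$-successor $z$ of $x$ lies in the same finite cluster—so $\dot x R^* z \to z \sim \dot x$, because an $R \cup S$-path starting at a nonroot node never returns to $0$—and that this cluster is finite. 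Using item~1, from the viewpoint of such an $x$ the numeration $\tau_Q$ refutes $\alpha(\dot z)$ for every $z \sim x$ with $\neg x Q z$; since the cluster is finite these refutations collect into the single $\tau_Q$-provable statement $\forall z(\dot x R^* z \land \alpha(z) \to \dot x Q z)$. Combining this with $\exists z(\dot x R^* z \land \alpha(z))$ under $\PR_{\tau_Q}$ by Fact~\ref{DC}.2 gives $\PR_{\tau_Q}(\gn{\exists z(\dot x Q z \land \alpha(z))})$, which is item~2.

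The step I expect to be the main obstacle is the middle one of item~2: turning the per-node refutations of item~1 into the \emph{uniformly} (in $x$) and \emph{universally} (in $z$) quantified $\tau_Q$-provable statement $\forall z(\dot x R^* z \land \alpha(z) \to \dot x Q z)$. This requires $\PA$ to prove, uniformly in the free variable $x$, a bound on the $R^*$-successors of $x$ that is available under $\PR_{\tau_Q}$, so that the unbounded $\forall z$ collapses to a bounded quantifier over the finitely many successors, and then to apply provable $\Sigma_1$-completeness to those finitely many instances. Granting the basic $\Delta_1$ facts that $\PA$ verifies about the primitive recursively represented model, this is routine but is the only delicate point; the remaining manipulations are bookkeeping with Fact~\ref{DC}.
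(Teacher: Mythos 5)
Your proposal is correct and follows essentially the same route as the paper's proof, which is simply to trace \cite[Lemma 4.6]{Kur18_2}: item 1 by noting that $\neg\alpha(\dot{y})$ is an axiom of $\tau_Q$ via the witnesses in $\delta_Q$ and $\gamma_Q$ (so a one-line proof exists), and item 2 by pushing Solovay's fourth condition through $\PA \vdash \PR_{\sigma}(v) \to \PR_{\tau_Q}(v)$ and then converting $R^*$-successors into $Q$-successors under the box using the finitely many item-1 refutations within the cluster of $x$. The uniformity point you flag (collecting the per-node refutations over a $\PA$-coded finite cluster and importing the $\Delta_1$ model facts under $\PR_{\tau_Q}$) is precisely what the paper absorbs into its standing assumption that $\PA$ proves the basic facts about the primitive recursively represented model, so your account matches the intended argument.
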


\begin{lem}[{cf.~\cite[Lemma 4.10]{Kur18_2}}]\label{Lem2}
If $i Q j$, then $\PA \vdash \alpha(\overline{i}) \to \neg \PR_{\tau_Q}(\gn{\neg \alpha(\overline{j})})$. 
\end{lem}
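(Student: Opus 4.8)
The plan is to reason inside $\PA$ under the hypothesis $\alpha(\overline{i})$ and to reduce $\tau_Q$-provability of $\neg\alpha(\overline{j})$ to $\sigma$-provability, which is already controlled by the third Solovay condition. Since $iQj$ gives $i R^* j$, that condition yields $\PA \vdash \alpha(\overline{i}) \to \neg\PR_\sigma(\gn{\neg\alpha(\overline{j})})$, so it suffices to establish $\PA \vdash \alpha(\overline{i}) \to \bigl(\PR_{\tau_Q}(\gn{\neg\alpha(\overline{j})}) \to \PR_\sigma(\gn{\neg\alpha(\overline{j})})\bigr)$. The case $i = 0$ is immediate: using the fact $\PA \vdash \alpha(0) \to \forall u(\tau_Q(u) \leftrightarrow \sigma(u))$ recorded just before Lemma~\ref{Lem1}, the two numerations define provably equivalent theories under $\alpha(0)$, hence $\PA \vdash \alpha(0) \to \forall x(\PR_{\tau_Q}(x) \leftrightarrow \PR_\sigma(x))$, and the claim follows from the third Solovay condition together with $0 R^* j$. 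So I focus on $i \neq 0$.

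First I would localize $\gamma_Q$. By the uniqueness clause (condition 2) of the Solovay formula, $\PA \vdash \alpha(\overline{i}) \to \forall x(\alpha(x) \to x = \overline{i})$, so $\PA \vdash \alpha(\overline{i}) \to \forall u(\gamma_Q(u) \leftrightarrow \delta_Q(\overline{i}, u))$, using $i \neq 0$. Setting $\tau'(u) :\equiv \sigma(u) \lor \delta_Q(\overline{i}, u)$, this gives $\PA \vdash \alpha(\overline{i}) \to \forall u(\tau_Q(u) \leftrightarrow \tau'(u))$. Because $\Prf$ refers to its numeration only through the axiom-membership test, the standard formalizable fact that provably equivalent numerations have provably equivalent provability predicates upgrades this to $\PA \vdash \alpha(\overline{i}) \to \forall x(\PR_{\tau_Q}(x) \leftrightarrow \PR_{\tau'}(x))$.

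Next I would evaluate $\tau'$ explicitly. Using the basic arithmetized facts about $\sim$ and $Q$, the cluster $C := \{z \mid i \sim z \ \&\ \neg\, i Q z\}$ is a fixed finite set (each $\sim$-class is finite), and $\PA$ proves $\delta_Q(\overline{i}, u) \leftrightarrow \bigvee_{z \in C} u = \gn{\neg\alpha(\overline{z})}$; thus $\tau'$ axiomatizes $T$ together with the finitely many sentences $\neg\alpha(\overline{z})$, $z \in C$. Crucially, $iQj$ forces $j \notin C$. By the formalized deduction theorem, $\PA \vdash \PR_{\tau'}(\gn{\neg\alpha(\overline{j})}) \to \PR_\sigma(\gn{\bigwedge_{z \in C} \neg\alpha(\overline{z}) \to \neg\alpha(\overline{j})})$. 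Since $j \notin C$, the uniqueness clause gives $\PA \vdash \alpha(\overline{j}) \to \bigwedge_{z \in C} \neg\alpha(\overline{z})$, whence $\PA \vdash \bigl(\bigwedge_{z \in C}\neg\alpha(\overline{z}) \to \neg\alpha(\overline{j})\bigr) \to \neg\alpha(\overline{j})$; applying Fact~\ref{DC} then turns the displayed implication into $\PA \vdash \PR_{\tau'}(\gn{\neg\alpha(\overline{j})}) \to \PR_\sigma(\gn{\neg\alpha(\overline{j})})$. Composing with the equivalence $\PR_{\tau_Q} \leftrightarrow \PR_{\tau'}$ under $\alpha(\overline{i})$ and with the third Solovay condition completes the argument.

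The hard part will be the transfer step of the second paragraph: the equivalence of $\tau_Q$ and $\tau'$ holds only under the $\Sigma_2$ hypothesis $\alpha(\overline{i})$, so I must carry that hypothesis through the arithmetized argument that provably equivalent axiom sets yield provably equal provability predicates, rather than invoking a clean unconditional equivalence of numerations. This is also precisely where the design of $\tau_Q$ pays off: having adjoined as extra axioms exactly the sentences $\neg\alpha(\overline{z})$ for the non-$Q$-successors $z$ in the cluster guarantees $j \notin C$, so every extra axiom is a consequence of $\alpha(\overline{j})$ and cannot help derive $\neg\alpha(\overline{j})$ beyond what $\sigma$ already does.
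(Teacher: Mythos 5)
Your proof is correct and is essentially the same argument as the one the paper points to in \cite[Lemma 4.10]{Kur18_2}: working under $\alpha(\overline{i})$, you use the uniqueness clause to localize $\gamma_Q$ to $\delta_Q(\overline{i},\cdot)$, identify $\tau_Q$ with $\sigma$ plus the finitely many axioms $\neg\alpha(\overline{z})$ for the non-$Q$-successors $z$ in the $\sim$-cluster, and then use the formalized deduction theorem, the fact that $j$ is not among those $z$, and Fact~\ref{DC} to reduce $\PR_{\tau_Q}(\gn{\neg\alpha(\overline{j})})$ to $\PR_{\sigma}(\gn{\neg\alpha(\overline{j})})$, which the third Solovay condition rules out since $iQj$ implies $iR^*j$. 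Your additional care about the hypothesis-relative equivalence of numerations and the degenerate case $i=0$ is sound and consistent with the basic facts the paper assumes $\PA$ to prove.
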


\begin{lem}[{cf.~\cite[Lemmas 4.3, 4.7, and 4.8]{Kur18_2}}]\label{Lem3}\leavevmode
\begin{enumerate}
	\item $\PA \vdash \forall x \forall y \bigl(x \neq 0 \land x Q y \to (\delta_Q(x, u) \to \delta_Q(y, u)) \bigr)$. 
	\item $\PA \vdash \exists x \exists z \bigl(x \neq 0 \land x Q z \land \alpha(z) \land \PR_{\sigma \lor \delta_Q(x)}(\gn{\varphi}) \bigr) \to \PR_{\tau_Q}(\gn{\varphi})$ for any $\mathcal{L}_A$-formula $\varphi$. 
	\item $\PA \vdash \PR_{\tau_Q}(\gn{\varphi}) \to \PR_{\tau_Q}(\gn{\PR_{\tau_Q}(\gn{\varphi})})$ for any $\mathcal{L}_A$-formula $\varphi$. 
\end{enumerate}
\end{lem}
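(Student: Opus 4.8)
The plan is to treat the three items uniformly in $Q \in \{R, S\}$, reasoning throughout inside $\PA$ about the $\Delta_1$-represented relations $R$, $S$, $\sim$, and $R^*$; all three are adaptations to the present two-relation setting of the corresponding lemmas of~\cite{Kur18_2}, and the only genuinely new bookkeeping is that the same argument must go through simultaneously for the $\GL$-relation $R$ and the relation $S$. I would establish the first item first, since the second and third build on it.

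For the first item I would argue informally inside $\PA$. Suppose $x \neq 0$, $x Q y$, and $\delta_Q(x, u)$, witnessed by some $z$ with $x \sim z$, $u = \gn{\neg \alpha(\dot z)}$, and $\neg x Q z$. I would reuse the same witness $z$ for $\delta_Q(y, u)$: the clause $u = \gn{\neg \alpha(\dot z)}$ is unchanged; $y \sim z$ holds because a $Q$-step stays inside a single component of the disjoint union (so $x \sim y$, whence $y \sim z$); and $\neg y Q z$ follows from transitivity of $Q$, since $y Q z$ together with $x Q y$ would give $x Q z$, contradicting $\neg x Q z$. Transitivity is available for both relations: $R$ is transitive by hypothesis and $S$ is transitive by the construction underlying Theorem~\ref{SVC2}. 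Since $\PA$ proves the relevant basic facts about $R$, $S$, and $\sim$, this formalizes directly.

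For the second item the plan is to show that, under the hypotheses $x \neq 0$, $x Q z$, and $\alpha(z)$, the numeration $\sigma(u) \lor \delta_Q(x, u)$ is provably contained in $\tau_Q(u)$, and then to invoke the formalized monotonicity of the provability predicate. First I would note $z \neq 0$, since the root $0$ has no $Q$-predecessor (a basic fact $\PA$ verifies about the model). The first item then gives $\delta_Q(x, u) \to \delta_Q(z, u)$, while $\alpha(z) \land z \neq 0$ turns a witness for $\delta_Q(z, u)$ into a witness for $\gamma_Q(u)$; hence $\PA \vdash \sigma(u) \lor \delta_Q(x, u) \to \tau_Q(u)$ under these hypotheses. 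Formalizing that enlarging the axiom set cannot shrink the set of theorems yields $\PR_{\sigma \lor \delta_Q(x)}(\gn{\varphi}) \to \PR_{\tau_Q}(\gn{\varphi})$, as required.

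The third item, the L\"ob condition for the $\Sigma_2$ numeration $\tau_Q$, is the crux and the place I expect the real difficulty. The governing idea is to reduce provability in the $\Sigma_2$ theory $\tau_Q$ to provability in the $\Sigma_1$ local theories $\sigma \lor \delta_Q(x)$: using that the Solovay point is unique and exists and that $\delta_Q(0, \cdot)$ is empty, I would first prove $\PA \vdash \alpha(x) \to \bigl(\PR_{\tau_Q}(\gn{\psi}) \leftrightarrow \PR_{\sigma \lor \delta_Q(x)}(\gn{\psi})\bigr)$ for every sentence $\psi$, the $\gamma_Q$-disjunct collapsing to the local numeration at the true point $x$. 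For each fixed $x$ the numeration $\sigma \lor \delta_Q(x)$ is $\Sigma_1$, so its provability predicate enjoys provable $\Sigma_1$-completeness uniformly in $x$, giving $\PR_{\sigma \lor \delta_Q(x)}(\gn{\varphi}) \to \PR_{\sigma \lor \delta_Q(x)}(\gn{\PR_{\sigma \lor \delta_Q(x)}(\gn{\varphi})})$. Assuming $\PR_{\tau_Q}(\gn{\varphi})$ and passing to the Solovay point, the remaining task is to upgrade, under the inner box, the local predicate $\PR_{\sigma \lor \delta_Q(x)}(\gn{\varphi})$ to $\PR_{\tau_Q}(\gn{\varphi})$, and then to restore the outer box via the equivalence above. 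This upgrade is precisely a formalized, one-level-down instance of the limit lemma (the second item), combined with the self-referential Solovay property that the theory proves the existence of a $Q$-successor carrying $\alpha$ (the fourth defining property of $\alpha$, and Lemma~\ref{Lem1}.2). The main obstacle will be carrying out this internal bridging cleanly, checking that $\sigma \lor \delta_Q(x)$ proves enough about the location of the true $\alpha$-point relative to $x$ to apply the second item inside a box, and verifying that the bookkeeping survives uniformly for both $Q = R$ and $Q = S$; this is the point at which I would follow the corresponding computation of~\cite{Kur18_2} most closely.
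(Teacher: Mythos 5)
Your reconstruction is essentially the proof the paper intends: the paper gives no details for this lemma, deferring to \cite[Lemmas 4.3, 4.7, and 4.8]{Kur18_2}, and your three arguments --- witness reuse plus transitivity of $Q$ for item 1 (exactly the point at which the paper remarks that transitivity is needed), provable inclusion of numerations plus formalized monotonicity for item 2, and, for item 3, the collapse of $\tau_Q$ to the $\Sigma_1$ numeration $\sigma \lor \delta_Q(x)$ at the Solovay point, provable $\Sigma_1$-completeness uniformly in the parameter $x$, and an internal application of item 2 together with Lemma~\ref{Lem1}.2 to upgrade the local predicate to $\PR_{\tau_Q}$ under the inner box --- are precisely the adaptation of Kurahashi's computation to the two relations $R$ and $S$.

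One case in item 3 is missing as written. Both Lemma~\ref{Lem1}.2 and item 2 carry the hypothesis $x \neq 0$, so the bridging mechanism you describe is unavailable in the branch where the Solovay value is $0$; since $\PA$ does not decide the value of $\alpha$ (and indeed $\N \models \alpha(0)$, so this is the ``true'' branch), the argument inside $\PA$ must split into the cases $x = 0$ and $x \neq 0$, and your sketch only covers the latter. The missing branch is handled by tools you already deploy in item 2: $\delta_Q(0, u)$ is provably empty (no $z$ satisfies $0 \sim z$), so $\sigma \lor \delta_Q(0)$ is provably included in $\tau_Q$ outright, and the upgrade under the inner box is plain formalized monotonicity rather than an appeal to item 2 and Lemma~\ref{Lem1}.2; the outer box is then restored by your equivalence $\alpha(x) \to \bigl(\PR_{\tau_Q}(\gn{\psi}) \leftrightarrow \PR_{\sigma \lor \delta_Q(x)}(\gn{\psi})\bigr)$ exactly as in the other branch. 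With that case added (and writing $\dot{x}$ for the parameter inside the inner box in your $\Sigma_1$-completeness step), your plan goes through.
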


Lemma~\ref{Lem3}.3 means that $\tau_Q(u)$ satisfies the L\"ob condition. 
Notice that the transitivity of $Q$ is used in the proof of Lemma~\ref{Lem3}.1. 
Let $f$ be the arithmetical interpretation defined by $f(p) \equiv \exists x (\alpha(x) \land x \Vdash p)$. 
Finally, we prove the following lemma. 

\begin{lem}\label{Lem4}
Let $B$ be any $\mathcal{L}(\rhd$)-formula and $i \neq 0$. 
\begin{enumerate}
	\item If $i \Vdash B$, then $\PA \vdash \alpha(\overline{i}) \to \FRS{B}$; 
	\item If $i \nVdash B$, then $\PA \vdash \alpha(\overline{i}) \to \neg \FRS{B}$. 
\end{enumerate}
\end{lem}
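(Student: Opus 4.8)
The plan is to prove parts (1) and (2) simultaneously by induction on the construction of $B$, using throughout that $\mathcal{M}$ is primitive recursively represented (so $\PA$ decides each atom $\overline{j} \Vdash p$), the uniqueness clause $\PA \vdash \forall x \forall y(\alpha(x) \land \alpha(y) \to x = y)$ of the Solovay conditions, and the formalized deduction theorem in the provable shape $\PA \vdash \Con_{\tau_S + \varphi} \leftrightarrow \neg \PR_{\tau_S}(\gn{\neg \varphi})$. First I would dispose of the base cases: for $B \equiv \bot$ only (2) is in force, and it is trivial since $\FRS{\bot}$ is $0=1$; for $B \equiv p$, part (1) holds because $i \Vdash p$ gives $\PA \vdash \overline{i} \Vdash p$, so $\alpha(\overline{i})$ witnesses $f(p) \equiv \exists x(\alpha(x) \land x \Vdash p)$, while part (2) holds because under $\alpha(\overline{i})$ uniqueness forces any witness $x$ of $f(p)$ to equal $\overline{i}$, which refutes $p$. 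The Boolean connectives follow immediately from the induction hypothesis by propositional reasoning.

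For $B$ of the form $\Box A$ I would reproduce the standard $\GL$-style Solovay argument with $Q = R$. For (1), if $i \Vdash \Box A$ then every $x \in R[i]$ (a finite set, all with $x \neq 0$) satisfies $x \Vdash A$, so the induction hypothesis gives $\PA \vdash \alpha(\overline{x}) \to \FRS{A}$; pushing the finite disjunction $\overline{i} R z \to \bigvee_{iRx} z = \overline{x}$ and these implications through $\PR_{\tau_R}$ (via Fact~\ref{DC}) and combining with Lemma~\ref{Lem1}.2, which gives $\alpha(\overline{i}) \to \PR_{\tau_R}(\gn{\exists z(\overline{i} R z \land \alpha(z))})$, produces $\PA \vdash \alpha(\overline{i}) \to \PR_{\tau_R}(\gn{\FRS{A}})$. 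For (2), pick $x$ with $i R x$ and $x \nVdash A$; the induction hypothesis yields $\PA \vdash \FRS{A} \to \neg \alpha(\overline{x})$, hence $\PA \vdash \PR_{\tau_R}(\gn{\FRS{A}}) \to \PR_{\tau_R}(\gn{\neg \alpha(\overline{x})})$ by Fact~\ref{DC}, while Lemma~\ref{Lem2} (with $Q = R$) supplies $\alpha(\overline{i}) \to \neg \PR_{\tau_R}(\gn{\neg \alpha(\overline{x})})$; combining gives $\PA \vdash \alpha(\overline{i}) \to \neg \PR_{\tau_R}(\gn{\FRS{A}})$.

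The heart is $B$ of the form $C \rhd D$, where $\FRS{C \rhd D}$ is $\PR_{\tau_R}(\gn{\FRS{C} \to \Con_{\tau_S + \FRS{D}}})$. For (1), assume $i \Vdash C \rhd D$ and fix any $x \in R[i]$ (so $x \neq 0$). If $x \Vdash C$, there is $y \neq 0$ with $x S y$ and $y \Vdash D$; the induction hypothesis gives $\PA \vdash \alpha(\overline{y}) \to \FRS{D}$, hence $\PA \vdash \PR_{\tau_S}(\gn{\neg \FRS{D}}) \to \PR_{\tau_S}(\gn{\neg \alpha(\overline{y})})$, and Lemma~\ref{Lem2} (with $Q = S$) gives $\alpha(\overline{x}) \to \neg \PR_{\tau_S}(\gn{\neg \alpha(\overline{y})})$, so $\PA \vdash \alpha(\overline{x}) \to \Con_{\tau_S + \FRS{D}}$; if instead $x \nVdash C$, the induction hypothesis gives $\PA \vdash \alpha(\overline{x}) \to \neg \FRS{C}$. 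In either case $\PA \vdash \alpha(\overline{x}) \to (\FRS{C} \to \Con_{\tau_S + \FRS{D}})$, and running these finitely many implications and the disjunction over $R[i]$ through $\PR_{\tau_R}$, together with Lemma~\ref{Lem1}.2 for $Q = R$, yields $\PA \vdash \alpha(\overline{i}) \to \FRS{C \rhd D}$. For (2), $i \nVdash C \rhd D$ gives $x$ with $i R x$, $x \Vdash C$, and $y \nVdash D$ for every $y$ with $x S y$; the induction hypothesis gives $\PA \vdash \alpha(\overline{x}) \to \FRS{C}$, while from $\PA \vdash \alpha(\overline{y}) \to \neg \FRS{D}$ for each $y \in S[x]$, Lemma~\ref{Lem1}.2 (with $Q = S$), and the disjunction $\overline{x} S z \to \bigvee_{xSy} z = \overline{y}$ I obtain $\PA \vdash \alpha(\overline{x}) \to \PR_{\tau_S}(\gn{\neg \FRS{D}})$, i.e. $\alpha(\overline{x}) \to \neg \Con_{\tau_S + \FRS{D}}$; thus $\PA \vdash \alpha(\overline{x}) \to \neg(\FRS{C} \to \Con_{\tau_S + \FRS{D}})$, so by Fact~\ref{DC} $\PA \vdash \PR_{\tau_R}(\gn{\FRS{C} \to \Con_{\tau_S + \FRS{D}}}) \to \PR_{\tau_R}(\gn{\neg \alpha(\overline{x})})$, and Lemma~\ref{Lem2} (with $Q = R$) combines with this to give $\PA \vdash \alpha(\overline{i}) \to \neg \FRS{C \rhd D}$.

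The hard part will be the disciplined bookkeeping of the $\rhd$ case: keeping the outer box under $\tau_R$ and the inner consistency under $\tau_S$ separated, and converting the two halves of the semantic clause — existence of an $S$-successor forcing $D$ versus all $S$-successors refuting $D$ — into, respectively, $\tau_S$-consistency (via Lemma~\ref{Lem2} for $Q = S$) and $\tau_S$-provability (via Lemma~\ref{Lem1}.2 for $Q = S$). I would also have to check the degenerate configurations where $R[i]$ or $S[x]$ is empty: there Lemma~\ref{Lem1}.2 asserts $\PR_{\tau_Q}$ of a refutable existential, so $\tau_Q$ is provably inconsistent under the relevant $\alpha(\cdot)$ and the required provability or consistency statements hold trivially, matching the vacuous truth of the semantic clause. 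Finally, that these numerations satisfy the L\"ob condition (Lemma~\ref{Lem3}.3) and that $S$ is transitive with $R \cup S$ conversely well-founded (Theorem~\ref{SVC2}) are precisely what legitimize the Solovay formula $\alpha$ and Lemmas~\ref{Lem1}--\ref{Lem3} on which the entire induction rests.
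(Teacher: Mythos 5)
Your proposal is correct and follows essentially the same route as the paper's proof: a simultaneous induction on $B$ whose crucial $C \rhd D$ case converts the existence of an $S$-successor forcing $D$ into $\Con_{\tau_S + \FRS{D}}$ via Lemma~\ref{Lem2} (with $Q=S$), and the failure of all $S$-successors into $\PR_{\tau_S}(\gn{\neg \FRS{D}})$ via Lemma~\ref{Lem1}.2, then wraps everything under $\PR_{\tau_R}$ using Lemma~\ref{Lem1}.2 and Lemma~\ref{Lem2} with $Q=R$, exactly as in the paper. The only difference is that you spell out the base, Boolean, and $\Box$ cases (and the empty-successor configurations) that the paper dismisses as routine, which is harmless.
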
	
\begin{proof}
We prove Clauses 1 and 2 simultaneously by induction on the construction of $B$. 
We give only a proof of the case that $B$ is of the form $C \rhd D$. 

1. Suppose $i \Vdash C \rhd D$. 
Let $j \in W$ be such that $i R j$. 
If $j \nVdash C$, then by the induction hypothesis, $\PA \vdash \alpha(\overline{j}) \to \neg \FRS{C}$. 
If $j \Vdash C$, then there exists a $k \in W$ such that $j S k$ and $k \Vdash D$. 
By the induction hypothesis, $\PA \vdash \alpha(\overline{k}) \to \FRS{D}$. 
Then, $\PA \vdash \neg \PR_{\tau_S}(\gn{\neg \alpha(\overline{k})}) \to \Con_{\tau_S + \FRS{D}}$. 
By Lemma~\ref{Lem2}, we get $\PA \vdash \alpha(\overline{j}) \to \neg \PR_{\tau_S}(\gn{\neg \alpha(\overline{k})})$, and hence $\PA \vdash \alpha(\overline{j}) \to \Con_{\tau_S + \FRS{D}}$. 

We have proved that 
\[
	\PA \vdash \forall x \bigl(\overline{i} R x \land \alpha(x) \to (\FRS{C} \to \Con_{\tau_S + \FRS{D}}) \bigr).
\]
Then, 
\[
	\PA \vdash \PR_{\tau_R}(\gn{\exists x(\overline{i} R x \land \alpha(x))}) \to \PR_{\tau_R}(\gn{\FRS{C} \to \Con_{\tau_S + \FRS{D}}}).
\]
Since $\PA \vdash \alpha(\overline{i}) \to \PR_{\tau_R}(\gn{\exists x(\overline{i} R x \land \alpha(x))})$ by Lemma~\ref{Lem1}.2, we obtain
\[
	\PA \vdash \alpha(\overline{i}) \to \PR_{\tau_R}(\gn{\FRS{C} \to \Con_{\tau_S + \FRS{D}}}).
\]
This means $\PA \vdash \alpha(\overline{i}) \to \FRS{C \rhd D}$. 

2. Suppose $i \nVdash C \rhd D$. 
Then, there exists a $j \in W$ such that $i R j$, $j \Vdash C$ and for all $k \in W$ with $j S k$, $k \nVdash D$. 
Then, by the induction hypothesis, $\PA$ proves $\alpha(\overline{j}) \to \FRS{C}$ and $\forall x \bigl(\overline{j} S x \land \alpha(x) \to \neg \FRS{D} \bigr)$. 
Then, $\PA \vdash \PR_{\tau_S}(\gn{\exists x(\overline{j} S x \land \alpha(x))}) \to \PR_{\tau_S}(\gn{\neg \FRS{D}})$. 
Since $\PA \vdash \alpha(\overline{j}) \to \PR_{\tau_S}(\gn{\exists x(\overline{j} S x \land \alpha(x))})$ by Lemma~\ref{Lem1}.2, we have $\PA \vdash \alpha(\overline{j}) \to \PR_{\tau_S}(\gn{\neg \FRS{D}})$. 
We have $\PA \vdash \alpha(\overline{j}) \to \FRS{C} \land \PR_{\tau_S}(\gn{\neg \FRS{D}})$. 
Then, we obtain
\[
	\PA \vdash \neg \PR_{\tau_R}(\gn{\neg \alpha(\overline{j})}) \to \neg \PR_{\tau_R}(\gn{\FRS{C} \to \Con_{\tau_S + \FRS{D}}}).
\]
Since $\PA \vdash \alpha(\overline{i}) \to \neg \PR_{\tau_R}(\gn{\neg \alpha(\overline{j})})$ by Lemma~\ref{Lem2}, we get
\[
	\PA \vdash \alpha(\overline{i}) \to \neg \PR_{\tau_R}(\gn{\FRS{C} \to \Con_{\tau_S + \FRS{D}}}).
\]
We conclude $\PA \vdash \alpha(\overline{i}) \to \neg \FRS{C \rhd D}$. 
\end{proof}

\begin{proof}[Proof of Theorem~\ref{AC}]
Suppose $\IL^-(\PP) \nvdash A$. 
Then, there exists an $i \neq 0$ such that $i \nVdash A$. 
By Lemma~\ref{Lem4}, $\PA \vdash \alpha(\overline{i}) \to \neg \FRS{A}$. 
Then, $\PA \vdash \neg \PR_{\sigma}(\gn{\neg \alpha(\overline{i})}) \to \neg \PR_{\sigma}(\gn{\FRS{A}})$. 
Since $0 R^* i$, $\PA \vdash \alpha(0) \to \neg \PR_{\sigma}(\gn{\neg \alpha(\overline{i})})$, and hence $\PA \vdash \alpha(0) \to \neg \PR_{\sigma}(\gn{\FRS{A}})$. 
Since $\N \models \alpha(0)$, we have $\N \models \neg \PR_{\sigma}(\gn{\FRS{A}})$. 
Therefore, we conclude $T \nvdash \FRS{A}$. 
\end{proof}

\section{Concluding remarks}

In the present paper, we have analyzed properties of the logic $\IL^-(\PP)$ from various perspectives. 
In particular, we found that $\IL^-(\PP)$ satisfies some logically good properties that $\IL^-$ does not. 
For example, $\IL^-$ does not have CIP and $\ell$FPP, whereas $\IL^-(\PP)$ does have these properties (Theorems~\ref{CIP} and \ref{lFPP}). 
Also, for the systems of sequent calculi $\ILms$ and $\ILmPs$ that we introduced, the cut-elimination theorem does not hold for the former, but it does for the latter (Theorem \ref{CE} and Corollary \ref{FCE}).
We also proved that $\IL^-(\PP)$ is a base logic for some relational semantics, that is, $\IL^-(\PP)$ is characterized by the class of all simplified $\IL^-(\PP)$-frames (Theorem \ref{SVC}).
Furthermore, we also proved that $\IL^-(\PP)$ has a connection with arithmetic, that is, it is complete with respect to some suitable arithmetical semantics (Theorem \ref{AC}). 
Therefore, we can say that the persistence principle $\PP$ is a well behaved principle for $\IL^-$. 

On the other hand, $\IL^-(\PP)$ does not have FPP, and UFP does not hold for $\IL^-(\PP)$, so it cannot be said that the logic can fully express the properties of arithmetic (Corollary \ref{FailureofFPP2}).
In this sense, it may be expected in the future to study meaningful extensions of $\IL^-(\PP)$. 
One candidate is $\IL^-(\J{4}_+, \PP)$, which is a sublogic of $\ILP$ and enjoys FPP.
Furthermore, we would like to mention as a candidate another logic that seems to make sense arithmetically. 
In our arithmetical semantics for $\IL^-(\PP)$, $\Box$ is interpreted by a provability predicate $\PR_{\tau_0}(\cdot)$, which is not necessarily $\Sigma_1$. 
If we restrict our argument to interpreting $\Box$ by a $\Sigma_1$ provability predicate, then $\mathcal{L}(\rhd)$-formulas that are not contained in $\IL^-(\PP)$ would become arithmetically valid. 
For example, the following Ignatiev's axiom $\mathbf{Sa}$ is such a formula: 
\begin{description}
	\item [Sa] $A \rhd B \to \bigl(A \land (C \rhd D) \bigr) \rhd \bigl(B \land (C \rhd D) \bigr)$. 
\end{description}
We propose to study the logic $\IL^-(\PP, \mathbf{Sa})$ following the work of the present paper. 

In Section~\ref{Sec_SV}, we proved that $\IL^-(\PP)$ is faithfully embeddable into bimodal logics $\GLK$ and $\FGL$ (Proposition \ref{Embed}). 
Inspired from this result, we introduced arithmetical semantics for $\IL^-(\PP)$ and directly proved that $\IL^-(\PP)$ is arithmetically complete. 
From this situation, we conjecture that the bimodal logics $\GLK$ and $\FGL$ are complete for the corresponding arithmetic semantics. 

\begin{prob}\leavevmode
\begin{enumerate}
	\item Is the logic $\GLK$ complete with respect to the arithmetical semantics based on pairs $(\tau_0, \tau_1)$ of numerals such that $\tau_0$ satisfies the L\"ob condition? 
	\item Is the logic $\FGL$ complete with respect to the arithmetical semantics based on pairs $(\tau_0, \tau_1)$ of numerals such that both $\tau_0$ and $\tau_1$ satisfy the L\"ob condition? 
\end{enumerate}
\end{prob}

From the previous studies on extensions of $\IL^-$, we know that two different logics $\IL^-(\J{2}_+, \J{5})$ and $\IL^-(\J{4}_+, \PP)$ have FPP. 
Then, we propose the following problem. 

\begin{prob}
Does the intersection of $\IL^-(\J{2}_+, \J{5})$ and $\IL^-(\J{4}_+, \PP)$ enjoy FPP?
\end{prob}

\section*{Acknowledgement}

The second author was supported by JSPS KAKENHI Grant Number JP19K14586. 
We would like to thank the anonymous referees for their careful reading of an earlier version of the paper.

\bibliographystyle{plain}
\bibliography{ref}

\end{document}